\def\normo#1{\left\|#1\right\|}
\def\normb#1{\big\|#1\big\|}
\def\aabs#1{\left|#1\right|}
\def\rev#1{\frac{1}{#1}}
\def\half#1{\frac{#1}{2}}
\def\norm#1{\|#1\|}
\def\jb#1{\langle#1\rangle}
\def\wt#1{\widetilde{#1}}
\def\wh#1{\widehat{#1}}
\newcommand{\T}{{\mathbb T}}
\newcommand{\R}{{\mathbb R}}
\newcommand{\C}{{\mathbb C}}
\newcommand{\Z}{{\mathbb Z}}
\newcommand{\ft}{{\mathcal{F}}}
\newcommand{\les}{{\lesssim}}
\newcommand{\ges}{{\gtrsim}}
\newcommand{\ra}{{\rightarrow}}
\newcommand{\Sch}{{\mathcal{S}}}
\newcommand{\supp}{{\mbox{supp}}}
\theoremstyle{plain}
  \newtheorem{theorem}[subsection]{Theorem}
  \newtheorem{proposition}[subsection]{Proposition}
  \newtheorem{lemma}[subsection]{Lemma}
  \newtheorem{corollary}[subsection]{Corollary}
\theoremstyle{remark}
  \newtheorem{remark}[subsection]{Remark}
\theoremstyle{definition}
\numberwithin{equation}{section}
\begin{document}
\title[Dispersion Generalized Benjamin-Ono equation]{Local Well-posedness for dispersion generalized Benjamin-Ono equations in Sobolev spaces}
\author{Zihua Guo}
\address{LMAM, School of Mathematical Sciences, Peking University, Beijing
100871, China}

\email{guozihua@gmail.com}

\begin{abstract}
We prove that the Cauchy problem for the dispersion generalized
Benjamin-Ono equation
\[\partial_t u+|\partial_x|^{1+\alpha}\partial_x u+uu_x=0,\ u(x,0)=u_0(x),\]
is locally well-posed in the Sobolev spaces $H^s$ for $s>1-\alpha$
if $0\leq \alpha \leq 1$. The new ingredient is that we develop the
methods of Ionescu, Kenig and Tataru \cite{IKT} to approach the
problem in a less perturbative way, in spite of the ill-posedness
results of Molinet, Saut and Tzvetkovin \cite{MST}. Moreover, as a
bi-product we prove that if $0<\alpha \leq 1$ the corresponding
modified equation (with the nonlinearity $\pm uuu_x$) is locally
well-posed in $H^s$ for $s\geq 1/2-\alpha/4$.
\end{abstract}

\keywords{Dispersion generalized Benjamin-Ono equation, Local
well-posedness}

\maketitle


\section{Introduction}

In this paper, we consider the Cauchy problem for the dispersion
generalized Benjamin-Ono equation
\begin{eqnarray}\label{eq:dgBO}
\left\{
\begin{array}{l}
\partial_t u+|\partial_x|^{1+\alpha}\partial_x u+uu_x=0,\ (x,t)\in \R^2,\\
u(x,0)=u_0(x),
\end{array}
\right.
\end{eqnarray}
where $0\leq \alpha \leq 1$, $u: \R^2 \ra \R$ is a real-valued
function and $|\partial_x|$ is the Fourier multiplier operator with
symbol $|\xi|$. These equations arise as mathematical models for the
weakly nonlinear propagation of long waves in shallow channels. Note
that the case $\alpha=0$ corresponds to the Benjamin-Ono equation
and the case $\alpha=1$ corresponds to the Korteweg-de Vries
equation. During the past decades, both of the two equations were
extensively studied in a large number of literatures
\cite{TaoBO,IK,Bour,KPV,KPV2,I-method}. For example, see
\cite{Taopage} for a thorough review.

In proving the well-posedness of the Cauchy problem \eqref{eq:dgBO}
by direct contraction principle, the biggest enemy is the loss of
derivative from the nonlinearity. It was proved by Molinet, Saut and
Tzvetkov \cite{MST} that if $0\leq \alpha <1$ then $H^s$ assumption
alone on the initial data is insufficient for a proof of local
well-posedness of \eqref{eq:dgBO} via Picard iteration by showing
the solution mapping fails to be $C^2$ smooth from $H^s$ to
$C([0,T];H^s)$ at the origin for any $s$. It is due to that the
dispersive effect of the dispersive group of Eq. \eqref{eq:dgBO}
when $0\leq \alpha <1$ is too weak to spread the derivative in the
nonlinearity and hence the $high\times low$ interactions break down
the $C^2$ smoothness. When $\alpha=0$ a stronger ill-posedness was
proved by Koch and Tzvetkov \cite{KoTz} that the solution mapping
actually fails to be locally uniformly continuous in $H^s$ for any
$s$. For the positive side, some weaker well-posedness results (only
require the solution mapping to be continuous) were obtained. For
the Benjamin-Ono equation ($\alpha=0$),  Tao \cite{TaoBO} obtained
the global wellposedness in $H^s$ for $s\geq 1$ by performing a
gauge transformation as for the derivative Schr\"odinger equation.
This result was improved to $s\geq 0$ by Ionescu and Kenig
\cite{IK}. For the KdV equation ($\alpha=1$), the first
well-posedness by contraction principle was due to Kenig, Ponce and
Vega \cite{KPV2} who obtained LWP in $H^s$ for $s>3/4$. Bourgain
\cite{Bour} extended this result to GWP in $L^2$ by developing
$X^{s,b}$ space. Then Kenig, Ponce and Vega \cite{KPV} obtained
local well-posedness in $H^{s}$ for $s>-3/4$ and Colliander, Keel,
Staffilani, Takaoka and Tao \cite{I-method} extended it to a global
result where $I-method$ was introduced. Local well-posedness in
$H^{-3/4}$ was obtained by Christ, Colliander, and Tao \cite{CCT}
using miura transform and the $H^{1/4}$ local well-posedness for the
modified KdV equation. Recently, the author \cite{GuoKdV} obtained
global well-posedness in $H^{-3/4}$ by using directly the
contraction principle to prove local well-posedness.

This paper is mainly concerned with Eq. \eqref{eq:dgBO} for $0\leq
\alpha<1$. If $0< \alpha<1$, Kenig, Ponce and Vega \cite{KPV3} have
shown that \eqref{eq:dgBO} is locally well-posed for data in $H^s$
provided $s\geq \frac{3}{4}(2-\alpha)$ using the energy method
enhanced with the smoothing effect. In \cite{CKS} Colliander, Kenig
and Staffilani obtained LWP for the data lying in some weighted
Sobolev spaces by applying Picard iteration. S. Herr \cite{Herr,
Herr2} obtained LWP in $H^s\cap \dot{H}^{\rev{2}-\rev{1+\alpha}}$
for $s>-\frac{3}{4}\alpha$ and global well-posedness for $s\geq 0$
by requiring the initial data has additional properties in low
frequency to make the contraction principle work. Compared to the
Benjamin-Ono equation, the dispersive group of \eqref{eq:dgBO} has
stronger dispersive effect but it seems difficult to apply a gauge
transform to \eqref{eq:dgBO} which can weaken the high-low
interaction\footnote{Personal communication. Recently, the author
learned that Ionescu, Herr, Kenig and Koch proved $L^2$
well-posedness by using a paradifferential gauge.}. Moreover, when
$0<\alpha<1$, Eq. \eqref{eq:dgBO} is not completely integrable, but
there are still at least the following three conservation laws: if
$u$ is a smooth solution to \eqref{eq:dgBO} then
\begin{eqnarray}\label{eq:conservation}
&&\frac{d}{dt}\int_\R u(x,t)dx=0, \\
&&\frac{d}{dt}\int_\R u(x,t)^2dx=0,\label{eq:L2con}\\
&&\frac{d}{dt}\int_\R
||\partial_x|^{\frac{1+\alpha}{2}}u|^2+\frac{1}{6}
u(x,t)^3dx=0.\label{eq:H1half}
\end{eqnarray}
These conservation laws provide a-priori bounds on the solution. For
example, we can easily get from \eqref{eq:L2con} and
\eqref{eq:H1half} that for any smooth solution $u$ of
\eqref{eq:dgBO} on $[-T,T]$ then we have
\begin{eqnarray}\label{eq:apriori}
\norm{u(t)}_{H^{\frac{1+\alpha}{2}}}\leq
C(\norm{u_0}_{H^{\frac{1+\alpha}{2}}}),\quad \forall t\in [-T,T].
\end{eqnarray}
We will also need another symmetry. It is easy to see that Eq.
\eqref{eq:dgBO} is invariant under the following scaling transform
for any $\lambda>0$
\begin{equation}\label{eq:scaling}
u(x,t)\ra\ u_\lambda(x,t)=\lambda^{1+\alpha}u(\lambda
x,\lambda^{2+\alpha}t), \quad
u_{0,\lambda}=\lambda^{1+\alpha}u_0(\lambda x).
\end{equation}
Then we see $\dot{H}^{-\rev{2}-\alpha}$ is the critical space in the
sense of the scaling \eqref{eq:scaling}
\[\norm{u_{0,\lambda}}_{\dot{H}^{-\rev{2}-\alpha}}=\norm{u_0}_{\dot{H}^{-\rev{2}-\alpha}}.\]

Now we state our main results:

\begin{theorem}\label{thmmain}
(a) Let $0\leq \alpha \leq 1$. Assume $s>1-\alpha$ and $u_0\in
H^\infty$. Then there exists $T=T(\norm{u_0}_{H^s})>0$ such that
there is a unique solution $u=S^\infty_T(u_0)\in C([-T,T]:H^\infty)$
of the Cauchy problem \eqref{eq:dgBO}. In addition, for any
$\sigma\geq s$
\begin{eqnarray}
\sup_{|t|\leq T} \norm{S^\infty_T(u_0)(t)}_{H^\sigma}\leq
C(T,\sigma,\norm{u_0}_{H^\sigma}).
\end{eqnarray}

(b) Moreover, the mapping $S_T^\infty:H^\infty\rightarrow
C([-T,T]:H^\infty)$ extends uniquely to a continuous mapping
\[S_T^s:H^s\rightarrow C([-T,T]:H^s).\]
\end{theorem}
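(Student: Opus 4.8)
The plan is to adapt the short-time Fourier restriction norm method of Ionescu, Kenig and Tataru \cite{IKT} to the dispersion relation $\omega(\xi)=|\xi|^{1+\alpha}\xi$. For each dyadic frequency $2^{k}$ I introduce an $X^{s,b}$-type space $X_{k}$ built on $L^{2}$ with the modulation weight $\langle\tau-\omega(\xi)\rangle^{1/2}$, supplemented near the characteristic surface by the customary low-modulation ($\ell^{1}$-in-$\tau$) correction; the solution space $F^{s}$ is the $\ell^{2}$-in-$k$ assembly of these pieces, each measured after partitioning the time interval into subintervals of length $\sim 2^{-(1+\alpha)k}$ (the scale dictated by the worst, high$\times$low, nonlinear resonance), and the nonlinearity space $N^{s}$ is the analogous space with the modulation exponent lowered by one. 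The energy is measured in $B^{s}=C([-T,T]:H^{s})$.

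I then need three estimates. First, a \emph{linear estimate}: for any solution $u$ of \eqref{eq:dgBO}, $\|u\|_{F^{s}}\lesssim\|u\|_{B^{s}}+\|\partial_{x}(u^{2})\|_{N^{s}}$, obtained from the group bound $\|W(t)u_{0}\|_{F^{s}}\lesssim\|u_{0}\|_{H^{s}}$ for $W(t)=e^{-t|\partial_{x}|^{1+\alpha}\partial_{x}}$, the Duhamel bound $\|\int_{0}^{t}W(t-t')f\,dt'\|_{F^{s}}\lesssim\|f\|_{N^{s}}$, and the reconstruction of the $F^{s}$-norm from the values of $u$ at the endpoints of the subintervals. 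Second, a \emph{bilinear estimate} $\|\partial_{x}(uv)\|_{N^{s}}\lesssim\|u\|_{F^{s}}\|v\|_{F^{s}}$, proved by a Littlewood--Paley decomposition: the high$\times$high interaction (where the output frequency does not exceed the input frequencies) is handled by the dispersive estimates (Strichartz and local smoothing), whereas in the high$\times$low interaction the product of a frequency-$2^{k}$ function with a frequency-$2^{j}$ one, $j\ll k$, produces modulations of size $\sim 2^{(1+\alpha)k+j}$, so the gain $2^{-\frac12((1+\alpha)k+j)}$ from the $N^{s}$-weight, together with the spreading of the convolution over that modulation band, exactly absorbs the derivative loss $2^{k}$, and summing the resulting dyadic series over $0\le j\le k$ is what forces $s>1-\alpha$. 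Third, an \emph{energy estimate}: for a smooth solution, testing the frequency-localized equation against $P_{k}u$ and symmetrizing reduces $\frac{d}{dt}\|P_{k}u\|_{L^{2}}^{2}$ to commutator terms of the form $[P_{k},P_{<k}u]\partial_{x}u$ together with genuinely resonant high$\times$high and high$\times$low contributions, which gives $\|u\|_{B^{s}}^{2}\lesssim\|u_{0}\|_{H^{s}}^{2}+\|u\|_{F^{s}}^{3}$, once more with the threshold $s>1-\alpha$ entering through the commutator term. Combining the three, $\|u\|_{B^{s}}\lesssim\|u_{0}\|_{H^{s}}+\|u\|_{F^{s}}^{3/2}$ and hence $\|u\|_{F^{s}}\lesssim\|u_{0}\|_{H^{s}}+\|u\|_{F^{s}}^{3/2}+\|u\|_{F^{s}}^{2}$; after using the scaling \eqref{eq:scaling} to reduce to small data (legitimate because $s$ exceeds the scaling-critical exponent $-\frac12-\alpha$, so $\|u_{0,\lambda}\|_{H^{s}}\to0$ as $\lambda\to0$), a continuity argument in $T$ (the short-time norms depending continuously on $T$) yields the required a priori bounds on an interval $[-T,T]$ with $T=T(\|u_{0}\|_{H^{s}})>0$. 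Combined with the classical local existence of smooth solutions for $H^{\infty}$ data this gives the existence and uniqueness in part (a); rerunning the bilinear and energy estimates with one factor held at the higher level $\sigma\ge s$ and the other at level $s$ gives the $H^{\sigma}$ persistence bound.

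For part (b) I approximate $u_{0}\in H^{s}$ by $u_{0,n}=P_{\le n}u_{0}\in H^{\infty}$; part (a) produces solutions $u_{n}=S^{\infty}_{T}(u_{0,n})$ on a common interval $[-T,T]$ with $\sup_{n}\|u_{n}\|_{F^{s}}<\infty$. The difference $u_{n}-u_{m}$ solves the equation linearized about $\frac12(u_{n}+u_{m})$, and running the same bilinear and energy machinery for it in a space of slightly lower regularity $H^{\sigma}$, $1-\alpha<\sigma<s$, shows that $\{u_{n}\}$ is Cauchy in $C([-T,T]:H^{\sigma})$. A Bona--Smith frequency-envelope argument --- exploiting $\|P_{>n}u_{0}\|_{H^{s}}\to0$ together with the fact, visible in the energy estimate, that the flow does not create high-frequency $H^{s}$ energy faster than a controlled rate --- upgrades this to convergence in $C([-T,T]:H^{s})$ and yields continuity of the extended map $S^{s}_{T}$.

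The step I expect to be the main obstacle is the high$\times$low interaction, which is precisely the mechanism behind the Molinet--Saut--Tzvetkov \cite{MST} failure of $C^{2}$ well-posedness and which makes the bilinear estimate false in every standard $X^{s,b}$ space. Everything therefore hinges on choosing the time-localization scale $2^{-(1+\alpha)k}$ finely enough that the modulation weight of $N^{s}$ --- and, dually, the commutator structure in the energy estimate --- precisely cancels the derivative in $uu_{x}$, and on checking that the residual dyadic sum over the low frequency converges exactly down to $s>1-\alpha$ and no further.
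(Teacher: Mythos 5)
Your overall architecture is the right one and matches the paper's (short-time $X^{s,b}$ spaces \`a la Ionescu--Kenig--Tataru, the three-estimate scheme \eqref{eq:scheme}, rescaling to small data, a continuity argument, and a Bona--Smith limit for part (b)). But there is one concrete error that prevents the scheme from closing at $s>1-\alpha$: the choice of the time-localization scale. You localize at frequency $2^{k}$ to intervals of length $2^{-(1+\alpha)k}$, ``dictated by the worst high$\times$low resonance.'' The paper uses $2^{-(1-\alpha)k}$, and this is not cosmetic. The rule in this method is to take the \emph{largest} interval (smallest exponent $\beta$ in $2^{-\beta k}$) for which the high$\times$low bilinear estimate still closes, because shortening the intervals degrades the energy estimate. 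Quantitatively: time localization at scale $2^{-\beta k}$ forces the output modulation to satisfy $j_{3}\geq \beta k$, and the high$\times$low piece of the bilinear estimate (Proposition \ref{phl}, via Corollary \ref{cor42}(b)) produces the prefactor $2^{k}\cdot 2^{-\beta k/2}\cdot 2^{-(1+\alpha)k/2}$, which is $O(1)$ exactly when $\beta\geq 1-\alpha$. So $\beta=1-\alpha$ already suffices for the bilinear estimate. On the other side, the trilinear integral in the energy estimate (Lemma \ref{lem3linear}) is summed over $\sim 2^{\beta k}$ subintervals and each contributes $\lesssim 2^{-j_{max}/2}2^{-(1+\alpha)k/2}\lesssim 2^{-\beta k/2}2^{-(1+\alpha)k/2}$, giving a net factor $2^{(\beta-1-\alpha)k/2}$: this is the crucial gain $2^{-\alpha k_{max}}$ when $\beta=1-\alpha$, but it is $2^{0}=1$ when $\beta=1+\alpha$. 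Without that gain, the commutator/low-frequency sum $\sum_{k_{1}\leq k}2^{k_{1}}\norm{u}_{F_{k_{1}}}$ in Proposition \ref{penergy} requires control of $\norm{u}_{F^{1}}$ rather than $\norm{u}_{F^{1-\alpha}}$, i.e.\ the energy estimate only closes for $s\geq 1$, and the threshold $s>1-\alpha$ is lost.

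The same miscalibration shows up in your heuristic that the modulation gain $2^{-\frac{1}{2}((1+\alpha)k+j)}$ ``exactly absorbs the derivative loss $2^{k}$'': that requires $(1+\alpha)k+j\geq 2k$, i.e.\ $j\geq(1-\alpha)k$, and it \emph{fails} precisely for low frequencies $j<(1-\alpha)k$ --- which is the Molinet--Saut--Tzvetkov regime you correctly identify as the obstacle. The resolution is not a shorter time interval but the fact that localizing to intervals of length $2^{-(1-\alpha)k}$ lifts the minimum output modulation to $2^{(1-\alpha)k}$, supplying the missing $2^{((1-\alpha)k-j)/2}$ in exactly that regime while keeping the number of subintervals small enough for the energy estimate. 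With $\beta=1-\alpha$ substituted throughout, the rest of your outline (including the $H^{\sigma}$ persistence by running one factor at level $\sigma$ and one at level $s$, and the difference-equation/Bona--Smith argument for part (b)) is in line with the paper's proof.
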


If $1/3<\alpha \leq 1$, then $1-\alpha<\half{1+\alpha}$. Thus from
the a-priori bound \eqref{eq:apriori}, and iterating Theorem
\ref{thmmain}, we obtain the following corollary.
\begin{corollary}
The Cauchy problem \eqref{eq:dgBO} is globally wellposed in $H^{s}$
for $s\geq\half{1+\alpha}$ if $1/3<\alpha\leq 1$.
\end{corollary}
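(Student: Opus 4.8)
The plan is to upgrade Theorem~\ref{thmmain} to a global statement by the standard energy-method continuation argument, exploiting that for $1/3<\alpha\le 1$ the conserved quantity in \eqref{eq:H1half} sits strictly above the local well-posedness threshold. The elementary observation is that $1-\alpha<\half{1+\alpha}$ exactly when $\alpha>1/3$; hence every $s\ge\half{1+\alpha}$ satisfies $s>1-\alpha$ and is covered by Theorem~\ref{thmmain}, while the a-priori bound \eqref{eq:apriori} controls the $H^{\frac{1+\alpha}{2}}$-norm, which dominates the ``base'' regularity $s_0:=\half{1+\alpha}$.

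First I would treat smooth data. Fix $u_0\in H^\infty$. Running the local theory of Theorem~\ref{thmmain}(a) at the base level $s_0$ produces a solution on a time interval of length $T_0(\norm{u_0}_{H^{s_0}})$, which we may take to be a non-increasing function of its argument; this solution is $C^1$ in time with values in $H^\infty$ (differentiate the equation), so the conservation laws \eqref{eq:L2con}--\eqref{eq:H1half} are valid on the interval and \eqref{eq:apriori} yields $\norm{u(t)}_{H^{s_0}}\le\norm{u(t)}_{H^{\frac{1+\alpha}{2}}}\le M:=C(\norm{u_0}_{H^{\frac{1+\alpha}{2}}})$ there. Since the life-span depends only on the $H^{s_0}$-norm, which never exceeds $M$, we may restart with a fixed step $T_*:=T_0(M)>0$ and, using local uniqueness from Theorem~\ref{thmmain}, glue into a global solution $u\in C(\R:H^\infty)$ with $\sup_{t\in\R}\norm{u(t)}_{H^{\frac{1+\alpha}{2}}}\le M$. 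The persistence estimate in Theorem~\ref{thmmain}(a) (its bound for $\sigma\ge s$) applied on each step propagates every higher Sobolev norm with at most a fixed multiplicative loss per step, so $\sup_{|t|\le L}\norm{u(t)}_{H^\sigma}<\infty$ for all $\sigma\ge s_0$ and all $L>0$.

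Next I would pass to general data $u_0\in H^s$ with $s\ge\half{1+\alpha}$. Take $u_0^{(n)}\in H^\infty$ with $u_0^{(n)}\to u_0$ in $H^s$ (hence also in $H^{s_0}$), and let $u^{(n)}$ be the global smooth solutions just constructed. For $n$ large their $H^{s_0}$-norms are $\le\norm{u_0}_{H^{s_0}}+1$, so by the previous paragraph the restart step $T_*$ can be chosen independently of $n$; fixing $L>0$ and iterating the persistence bound of Theorem~\ref{thmmain}(a) over the $\lceil L/T_*\rceil$ steps needed to reach $\pm L$ gives a uniform bound $\sup_{|t|\le L}\norm{u^{(n)}(t)}_{H^s}\le R(L,\norm{u_0}_{H^s})$. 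Shrinking the step to $\min(T_*,T_0(R))$ so that Theorem~\ref{thmmain}(b) applies on each piece, the continuity statement of Theorem~\ref{thmmain}(b) makes $(u^{(n)})$ Cauchy in $C(\cdot:H^s)$ on every piece, hence in $C([-L,L]:H^s)$; the limit $u\in C(\R:H^s)$ solves \eqref{eq:dgBO}, is unique by the same continuation argument, and $u_0\mapsto u$ is continuous from $H^s$ into $C([-L,L]:H^s)$ for every $L$, which is the asserted global well-posedness.

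The only genuinely delicate point is the uniformity of the time step along the iteration: since \eqref{eq:apriori} controls only the $H^{\frac{1+\alpha}{2}}$-norm, one is forced to run the local theory at the base level $s_0=\half{1+\alpha}$ (legitimate precisely because $\alpha>1/3$) and to bring in the extra regularity only through the persistence estimate of Theorem~\ref{thmmain}(a); for rough data there is the additional bookkeeping of first establishing a uniform $H^s$-bound on the approximating solutions over each compact time interval before the continuity of Theorem~\ref{thmmain}(b) can be invoked. Everything else is routine, and no ingredient beyond Theorem~\ref{thmmain}, the conservation laws, and the inequality $1-\alpha<\half{1+\alpha}$ is needed.
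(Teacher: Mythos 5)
Your proposal is correct and follows essentially the same route as the paper, which disposes of the corollary in one line: since $1-\alpha<\half{1+\alpha}$ exactly when $\alpha>1/3$, the a-priori bound \eqref{eq:apriori} coming from the conservation laws \eqref{eq:L2con}--\eqref{eq:H1half} keeps the $H^{(1+\alpha)/2}$-norm under control, so Theorem \ref{thmmain} can be iterated with a uniform time step. Your write-up merely makes explicit the standard bookkeeping (persistence of higher regularity, approximation of rough data, uniformity of the step) that the paper leaves implicit.
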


It is easy to see $1-\alpha < \frac{3}{4}(2-\alpha)$ for $0\leq
\alpha \leq 1$, thus our results improve the results in \cite{KPV3}.
We discuss now some of the ingredients in the proof of Theorem
\ref{thmmain}. We will adapt the ideas of Ionescu, Kenig and Tataru
\cite{IKT} to approach the problem in a less perturbative way. It
can be viewed as a combination of the energy methods and the
perturbative methods. More precisely, we will define $F^{s}(T),
N^{s}(T)$ and energy space $E^{s}(T)$ and show that if $u$ is a
smooth solution of \eqref{eq:dgBO} on $\R\times [-T,T]$ then
\begin{eqnarray}\label{eq:scheme}
\left \{
\begin{array}{l}
\norm{u}_{F^{s}(T)}\les \norm{u}_{E^{s}(T)}+\norm{\partial_x(u^2)}_{N^{s}(T)};\\
\norm{\partial_x(u^2)}_{N^{s}(T)}\les \norm{u}_{F^{s}(T)}^2;\\
\norm{u}_{E^{s}(T)}^2\les
\norm{\phi}_{{H}^s}^2+\norm{u}_{F^{s}(T)}^3.
\end{array}
\right.
\end{eqnarray}
The inequalities \eqref{eq:scheme} and a continuity argument still
suffice to control $\norm{u}_{F^{s}(T)}$, provided that
$\norm{\phi}_{{H}^s}\ll 1$ (which can by arranged by rescaling if
$s\geq 0$). The first inequality in \eqref{eq:scheme} is the
analogue of the linear estimate. The second inequality in
\eqref{eq:scheme} is the analogue of the bilinear estimate. The last
inequality in \eqref{eq:scheme} is an energy-type estimate. To prove
Theorem \ref{thmmain} (b), we need to study the difference equation
of Eq. \eqref{eq:dgBO}. This difference equation has less
symmetries, but some special symmetries for real-valued solutions in
$L^2$. We then follow the methods in \cite{IKT} to prove the
continuity of the solution mapping in $H^s$ by adapting the
Bona-Smith method \cite{BonaSmith}.

We will develop the ideas in \cite{IKT} to define the main normed
and semi-normed spaces. As was explained before, standard using of
$X^{s,b}$ spaces in fixed-point argument does not work for
\eqref{eq:dgBO}. But we use $X^{s,b}$-type structures only on small,
frequency dependant time intervals. The high-low interaction can be
controlled for short time. The length of the time interval will be
important. Generally, one need to control the interaction in as
large time interval as possible and leave the rest to be controlled
in the energy estimates. We will choose the length which will just
suffice to control the high-low interaction. Since we only control
the interaction in short time then we need to define
$\norm{u}_{E^{s}(T)}$ sufficiently large to be able to still prove
the linear estimate in \eqref{eq:scheme}. Finally, we use
frequence-localized energy estimates and the symmetries of the
equation \eqref{eq:dgBO} to prove the energy estimates.

As a bi-product, we use our estimates for the multiplier to study
the following modified equation
\begin{eqnarray}\label{eq:mdgBO}
\partial_t u+|\partial_x|^{1+\alpha}\partial_x u\mp u^2u_x=0, \quad
u(x,0)=u_0(x).
\end{eqnarray}
When $\alpha=0$ and $\alpha=1$, it corresponds to the modified
Benjamin-Ono equation and modified Korteweg-de Vries equation. Both
equations were also extensively studied \cite{MR,MR2,KenigT,Guo}.
The high-low interactions in the trilinear estimates are much weaker
than that in the bilinear estimates. Indeed, it is known that for
$\alpha=1$ the high-low interactions are under control \cite{KPV}
and for $\alpha=0$ the high-low interactions only cause logarithmic
divergence which is removable \cite{Guo}. So it is natural to
conjecture that for $\alpha>0$ the high-low interactions are also
under control and a direct using of $X^{s,b}$ space would suffice
for a well-posedness as in \cite{KPV}. We proved the following

\begin{theorem}\label{thmmdg}
Let $0<\alpha \leq 1$ and $\phi \in H^s$ for $s\geq 1/2-\alpha/4$.
Then there exist $T=T(\norm{\phi}_{H^{1/2-\alpha/4}})>0$ and a
unique solution $u\in X_T^{s,1/2+}$ to \eqref{eq:mdgBO} on $(-T,T)$.
Moreover, the solution mapping $\phi \rightarrow u$ is locally
Lipschitz continuous from $H^s$ to $C([-T,T]:H^s)$.
\end{theorem}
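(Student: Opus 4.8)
The plan is to prove Theorem \ref{thmmdg} by a direct contraction argument in the Bourgain space $X_T^{s,1/2+}$, exactly as suggested by the discussion preceding the statement: for $\alpha>0$ the dispersion relation $\omega(\xi)=|\xi|^{1+\alpha}\xi$ is strong enough that the nonlinearity $u^2u_x$ loses derivative in a controllable way. The core is a trilinear estimate: for $s\geq 1/2-\alpha/4$ and appropriate $b=1/2+$, $b'=-1/2+$,
\begin{equation}\label{eq:trilin}
\norm{\partial_x(u_1u_2u_3)}_{X^{s,b'}}\les \prod_{j=1}^3\norm{u_j}_{X^{s,b}}.
\end{equation}
Once \eqref{eq:trilin} is in hand, the rest is the standard Picard-iteration package: the linear estimates $\norm{W(t)\phi}_{X_T^{s,b}}\les\norm{\phi}_{H^s}$ and $\norm{\int_0^t W(t-t')F(t')\,dt'}_{X_T^{s,b}}\les\norm{F}_{X_T^{s,b'}}$ for the Duhamel term, together with the time-localization gain $T^{\theta}$ coming from $b+b'<0$ in the restricted norms, give a contraction on a ball in $X_T^{s,b}$ for $T$ small depending only on $\norm{\phi}_{H^{1/2-\alpha/4}}$ (using that the scaling-subcritical index $s=1/2-\alpha/4$ drives the time of existence); uniqueness, persistence of regularity, and local Lipschitz dependence on the data then follow by the usual differencing argument, since the difference of two solutions solves an equation with a trilinear right-hand side estimated the same way.

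To prove \eqref{eq:trilin} I would decompose dyadically in both spatial frequency and modulation, $|\xi_j|\sim N_j$ and $|\tau_j-\omega(\xi_j)|\sim L_j$, write $N_1\geq N_2\geq N_3$ after symmetrization, and use the resonance identity: on the support of the convolution,
\[
\sum_{j=0}^3(\tau_j-\omega(\xi_j)) = \Omega(\xi_1,\xi_2,\xi_3):=\omega(\xi_0)-\omega(\xi_1)-\omega(\xi_2)-\omega(\xi_3),
\]
with $\xi_0=\xi_1+\xi_2+\xi_3$, so that $\max_j L_j \ges |\Omega|$. The key algebraic input is the lower bound $|\Omega|\ges N_1^{1+\alpha}N_{3}$ (or more precisely $\ges N_{\max}^{1+\alpha}N_{\mathrm{med}}$ in the high-high-low regimes) in the genuinely interacting cases, which in the worst high$\times$low$\times$low configuration ($N_1\sim N_0\gg N_2,N_3$) degenerates but still gives $|\Omega|\ges N_1^{\alpha}N_1$ worth of modulation — enough, after placing the high-modulation factor in $L^2$ via its $X^{0,1/2}$-norm and the other two factors in $L^4_{x,t}$ via the $L^4$ Strichartz / Bourgain embedding $X^{0,3/8+}\hookrightarrow L^4_{x,t}$ (which holds in this range of $\alpha$), to absorb the single derivative $N_0$ from $\partial_x$ down to the level permitted by $s\geq 1/2-\alpha/4$. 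I would also invoke a bilinear $L^2$ (Fourier-restriction) estimate for products of two free-type solutions at separated or comparable frequencies to handle the high-high-to-low output case efficiently.

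The main obstacle — and the one that pins down the exponent $1/2-\alpha/4$ — is precisely the high$\times$low$\times$low interaction combined with the requirement $b<1/2+\varepsilon$: there the phase function $\Omega$ provides only $\sim N_{\max}^{\alpha}$ of usable smoothing (not $N_{\max}^{1+\alpha}$), the extra derivative from $\partial_x$ must be shared, and one must not overspend modulation. The bookkeeping here is the delicate part: one has to balance how much of the $N_{\max}$-derivative is paid for by $\Omega$ (hence needs $L_{\max}\ges N_{\max}^{\alpha+1}$, consuming a $b$-power) against how much is absorbed by the $L^4$ Strichartz gain, and the break-even point is exactly $s=1/2-\alpha/4$; when $\alpha\to 0$ this tends to $1/2$ but a genuine logarithmic divergence appears (the removable one referred to in the text for mBO), which is why the theorem requires $\alpha>0$ strictly. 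Summing the dyadic pieces then requires checking that in every regime the exponent of the smallest frequency is strictly positive (or can be made so by the $\varepsilon$-room in $b,s$), giving a convergent geometric series; I expect no further surprises once the high$\times$low$\times$low case is correctly organized.
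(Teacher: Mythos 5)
Your overall architecture (a trilinear estimate in $X^{s,b}$ with $b=1/2+\epsilon$, $b'=-1/2+\epsilon$, followed by standard Picard iteration with the time of existence driven by the $H^{1/2-\alpha/4}$ norm) is exactly the paper's plan, and the reduction to Proposition \ref{prop:trilinear} plus the routine Duhamel/contraction package is correct. The problem is in your proposed proof of the trilinear estimate itself: the ``key algebraic input'' you rely on, namely the lower bound $|\Omega(\xi_1,\xi_2,\xi_3)|\gtrsim N_{\max}^{1+\alpha}N_{\mathrm{med}}$ (or even $\gtrsim N_{\max}^{1+\alpha}$ in the high--low--low regime), is false on a large portion of the interaction set. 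The cubic resonance function $h=\omega(\xi_1)+\omega(\xi_2)+\omega(\xi_3)+\omega(\xi_4)$ on $\xi_1+\xi_2+\xi_3+\xi_4=0$ vanishes identically whenever two of the frequencies cancel in pairs (e.g.\ $\xi_1=-\xi_2=N$, $\xi_3=-\xi_4$, for every $\alpha$ --- for $\alpha=1$ this is the factorization $3(\xi_1+\xi_2)(\xi_1+\xi_3)(\xi_1+\xi_4)$), and more generally in the high--low--low case $|h|\sim N_{\max}^{1+\alpha}|\xi_2+\xi_3|$ degenerates to zero as $\xi_2+\xi_3\to 0$. These are genuinely interacting configurations, not edge cases, and they are precisely where the single derivative must be absorbed with no modulation gain available; your bookkeeping, which budgets a factor $L_{\max}\gtrsim N_{\max}^{1+\alpha}$ of modulation in every regime, breaks down there.

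The paper sidesteps this entirely: it never uses the cubic resonance. By duality and Tao's $TT^*$ identity the $[4;\R\times\R]$ multiplier bound is reduced to the single bilinear estimate of Proposition \ref{kpv-2}, $\norm{uv}_{L^2}\lesssim\norm{u}_{X^{-1/2,1/2-\epsilon}}\norm{v}_{X^{1/2-\alpha/4,1/2+\epsilon}}$, in which only the \emph{bilinear} resonance $\Omega(\xi_1,\xi_2)=\omega(\xi_1)+\omega(\xi_2)-\omega(\xi_1+\xi_2)$ appears; by Lemma \ref{resoest} this satisfies the two-sided bound $|\Omega|\sim|\xi|_{\max}^{1+\alpha}|\xi|_{\min}$ with no cancellation, and the dyadic summation then closes using the kernel bounds of Lemma \ref{pchar} (which is where the bilinear Strichartz input \eqref{eq:bilinearstri} and the exponent $1/2-\alpha/4$ actually enter). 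To repair your argument you would either need to adopt this $TT^*$ pairing (splitting the quadrilinear form into two bilinear $L^2$ factors so that each pairing sees a nondegenerate bilinear resonance), or supply an independent mechanism --- local smoothing plus maximal function in the style of Kenig--Ponce--Vega, say --- for the zero-resonance configurations; as written, the estimate does not close in those cases.
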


On the other hand, the equation \eqref{eq:mdgBO} has also several
conservation laws: if $u$ is a smooth solution to \eqref{eq:mdgBO}
then
\begin{eqnarray}\label{eq:mdgBOconservation}
&&\frac{d}{dt}\int_\R u(x,t)dx=0, \\
&&\frac{d}{dt}\int_\R u(x,t)^2dx=0,\label{eq:L2conmdg}\\
&&\frac{d}{dt}\int_\R
||\partial_x|^{\frac{1+\alpha}{2}}u|^2\pm\frac{1}{12}
u(x,t)^4dx=0.\label{eq:H1halfmdg}
\end{eqnarray}
It is easy to see that Eq. \eqref{eq:mdgBO} is invariant under the
following scaling transform: for any $\lambda>0$
\begin{equation}\label{eq:scalingmdg}
u(x,t)\ra\ u_\lambda(x,t)=\lambda^{\half{1+\alpha}}u(\lambda
x,\lambda^{2+\alpha}t), \quad
u_{0,\lambda}=\lambda^{\half{1+\alpha}}u_0(\lambda x).
\end{equation}
Then we see $L^2$ is subcritical space in the sense of the scaling
and easily obtain the a-priori bound: if $u$ is a smooth solution to
\eqref{eq:mdgBO} (both foucusing and defocusing) then for any $t\in
\R$
\begin{eqnarray}\label{eq:apriorimdg}
\norm{u}_{H^{\half{1+\alpha}}}\leq
C(\norm{\phi}_{H^{\half{1+\alpha}}}).
\end{eqnarray}

From $1/2-\alpha/4\leq \half{1+\alpha}$ and the a-priori bound
\eqref{eq:apriorimdg}, and iterating Theorem \ref{thmmain}, we
obtain the following corollary.
\begin{corollary}
The Cauchy problem \eqref{eq:mdgBO} is globally wellposed in $H^{s}$
for $s\geq\half{1+\alpha}$ if $0<\alpha\leq 1$.
\end{corollary}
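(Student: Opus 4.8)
The plan is to globalize the local solution furnished by Theorem~\ref{thmmdg} through the standard conservation-law iteration, the key point being that the local existence time in that theorem depends only on the lower-order norm $\norm{\phi}_{H^{1/2-\alpha/4}}$. First I would record the elementary inequality $1/2-\alpha/4\leq\half{1+\alpha}$ for $0<\alpha\leq 1$ (their difference is $3\alpha/4\geq 0$), so that the embedding $H^{\half{1+\alpha}}\hookrightarrow H^{1/2-\alpha/4}$ lets the a-priori bound \eqref{eq:apriorimdg} control precisely the norm on which the local existence time depends.

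Next I would verify \eqref{eq:apriorimdg} for smooth solutions from the conservation laws. In the defocusing case the Hamiltonian \eqref{eq:H1halfmdg} bounds $\norm{|\partial_x|^{\half{1+\alpha}}u}_{L^2}^2$ directly, and together with the $L^2$ conservation \eqref{eq:L2conmdg} this yields $\norm{u(t)}_{H^{\half{1+\alpha}}}\leq C(\norm{\phi}_{H^{\half{1+\alpha}}})$. In the focusing case the quartic term must be absorbed: writing $\sigma=\half{1+\alpha}>1/2$ for $\alpha>0$, the one-dimensional Gagliardo--Nirenberg inequality gives $\norm{u}_{L^4}^4\les\norm{u}_{L^2}^{4-1/\sigma}\norm{|\partial_x|^\sigma u}_{L^2}^{1/\sigma}$ with exponent $1/\sigma<2$, so Young's inequality absorbs $\tfrac{1}{12}u^4$ into the gradient part of the energy, again producing \eqref{eq:apriorimdg}. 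The hypothesis $\alpha>0$ (equivalently, that $L^2$ is subcritical) is used exactly here.

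Granting these two facts, the iteration is immediate for $\phi\in H^\infty$: Theorem~\ref{thmmdg} produces a solution lying in every $X_T^{s,1/2+}\subset C([-T,T]:H^s)$ on a step whose length $T_*>0$ depends only on $\norm{\phi}_{H^{1/2-\alpha/4}}$, and by \eqref{eq:apriorimdg} this low norm stays bounded by a fixed constant for all time. Hence each reapplication started from $u(t_0)$ advances by the same uniform step $T_*$, and finitely many steps cover any prescribed $[-T^*,T^*]$, yielding a global smooth solution. Since the solution lies in $C([-T^*,T^*]:H^s)$ for every $s$, its $H^s$ norm (for $s>\half{1+\alpha}$) stays finite on each bounded interval, increasing by at most a bounded factor per step.

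Finally I would remove the smoothness assumption. Given $\phi\in H^s$ with $s\geq\half{1+\alpha}$, choose $\phi_n\to\phi$ in $H^s$ with $\phi_n\in H^\infty$; the uniform local step and the local Lipschitz bound of Theorem~\ref{thmmdg} make the associated global smooth solutions Cauchy in $C([-T^*,T^*]:H^s)$ on each fixed interval, and the limit is the sought global $H^s$ solution, with \eqref{eq:apriorimdg} passing to the limit by continuity. The main obstacle is the focusing a-priori bound---ruling out finite-time blow-up driven by the sign-indefinite quartic term---which is precisely the Gagliardo--Nirenberg absorption above; once that is in place the globalization is routine.
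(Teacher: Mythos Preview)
Your proposal is correct and follows exactly the approach the paper indicates: the paper's entire argument is the one sentence preceding the corollary, namely that $1/2-\alpha/4\leq\half{1+\alpha}$ together with the a-priori bound \eqref{eq:apriorimdg} allows one to iterate the local result (the paper writes Theorem~\ref{thmmain}, evidently a typo for Theorem~\ref{thmmdg}). You have simply fleshed out the details the paper leaves implicit---in particular the Gagliardo--Nirenberg absorption in the focusing case, which the paper asserts but does not write out---and your density argument in the last paragraph, while correct, is slightly more than needed since Theorem~\ref{thmmdg} already applies directly to $\phi\in H^s$ with Lipschitz dependence, so one can iterate without first passing to smooth data.
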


The rest of the paper is organized as follows: In section 2 we
present some notations and Banach function spaces. The estimates for
the characterization multiplier will be given in section 3. In
section 4 we prove Theorem \ref{thmmdg}. In section 5 we prove some
short-time bilinear estimates. We prove Theorem \ref{thmmain} in
section 6 using the energy estimates obtained in section 7.

\section{Notation and Definitions}

Throughout this paper, we fix $0\leq \alpha <1$. For $x, y\in \R^+$,
$x\les y$ means that there exists $C>0$ such that $x\leq Cy$. By
$x\sim y$ we mean $x\les y$ and $y\les x$. For $f\in \Sch'$ we
denote by $\widehat{f}$ or $\ft (f)$ the Fourier transform of $f$
for both spatial and time variables,
\begin{eqnarray*}
\widehat{f}(\xi, \tau)=\int_{\R^2}e^{-ix \xi}e^{-it \tau}f(x,t)dxdt.
\end{eqnarray*}
Moreover, we use $\ft_x$ and $\ft_t$ to denote the Fourier transform
with respect to space and time variable respectively. Let
$\Z_+=\Z\cap[0, \infty)$. Let $I_{\leq 0}=\{\xi: |\xi|<3/2\}$,
$\wt{I}_{\leq 0}=\{\xi:|\xi|\leq 2\}$. For $k\in \Z$ let
\[I_k=\{\xi:|\xi|\in [(3/4)\cdot 2^k,
(3/2)\cdot 2^k)\ \}\mbox{ and }\widetilde{I}_k=\{\xi: |\xi|\in
[2^{k-1}, 2^{k+1}]\ \}.\]

Let $\eta_0: \R\rightarrow [0, 1]$ denote an even smooth function
supported in $[-8/5, 8/5]$ and equal to $1$ in $[-5/4, 5/4]$. For
$k\in \Z$ let $\chi_k(\xi)=\eta_0(\xi/2^k)-\eta_0(\xi/2^{k-1})$,
$\chi_k$ supported in $\{\xi: |\xi|\in[(5/8)\cdot 2^k, (8/5)\cdot
2^k]\}$, and
\[\chi_{[k_1,k_2]}=\sum_{k=k_1}^{k_2}\chi_k \mbox{ for any } k_1\leq k_2\in \Z.\]
For simplicity, let $\eta_k=\chi_k$ if $k\geq 1$ and $\eta_k\equiv
0$ if $k\leq -1$. Also, for $k_1\leq k_2\in \Z$ let
\[\eta_{[k_1,k_2]}=\sum_{k=k_1}^{k_2}\eta_k \mbox{ and }\eta_{\leq k_2}=\sum_{k=-\infty}^{k_2}\eta_{k}.\]
Roughly speaking, $\{\chi_k\}_{k\in \mathbb{Z}}$ is the homogeneous
decomposition function sequence and $\{\eta_k\}_{k\in \mathbb{Z}_+}$
is the non-homogeneous decomposition function sequence to the
frequency space. For $k\in \Z$ let $P_k$ denote the operators on
$L^2(\R)$ defined by
$\widehat{P_ku}(\xi)=1_{I_k}(\xi)\widehat{u}(\xi)$. By a slight
abuse of notation we also define the operators $P_k$ on
$L^2(\R\times \R)$ by formulas $\ft(P_ku)(\xi,
\tau)=1_{I_k}(\xi)\ft(u)(\xi, \tau)$. For $l\in \Z$ let
\[
P_{\leq l}=\sum_{k\leq l}P_k, \quad P_{\geq l}=\sum_{k\geq l}P_k.
\]

For $x\in \R$, let $[x]$ be the largest integer that is less or
equal to $x$. Let $a_1, a_2, a_3\in \R$. It will be convenient to
define the quantities $a_{max}\geq a_{med}\geq a_{min}$ to be the
maximum, median, and minimum of $a_1,a_2,a_3$ respectively. Usually
we use $k_1,k_2,k_3$ and $j_1,j_2,j_3$ to denote integers,
$N_i=2^{k_i}$ and $L_i=2^{j_i}$ for $i=1,2,3$ to denote dyadic
numbers.

For $\xi\in \R$ let
\begin{equation}\label{eq:dr}
\omega(\xi)=-\xi|\xi|^{1+\alpha}
\end{equation}
which is the dispersion relation associated to Eq. \eqref{eq:dgBO}.
For $\phi \in L^2(\R)$ let $W(t)\phi\in C(\R: L^2)$ denote the
solution of the free evolution given by
\begin{equation}
\ft_x[W(t)\phi](\xi,t)=e^{it\omega(\xi)}\widehat{\phi}(\xi).
\end{equation}
We introduce the $X^{s,b}$ norm associated to the equation
\eqref{eq:dgBO} which is given by
\begin{eqnarray*}
\norm{u}_{X^{s,b}}=\norm{\jb{\tau-\omega(\xi)}^b\jb{\xi}^s\widehat{u}(\xi,\tau)}_{L^2(\R^2)},
\end{eqnarray*}
where $\jb{\cdot}=(1+|\cdot|^2)^{1/2}$. The spaces $X^{s,b}$ turn
out to be very useful in the study of low-regularity theory for the
dispersive equations. These spaces were first used to systematically
study nonlinear dispersive wave problems by Bourgain \cite{Bour} and
developed by Kenig, Ponce and Vega \cite{KPV} and Tao \cite{Taokz}.
Klainerman and Machedon \cite{KlMa} used similar ideas in their
study of the nonlinear wave equation. We denote by $X^{s,b}_T$ the
space that $X^{s,b}$ localized to the interval $[-T,T]$.

For $k,j \in \Z_+$ let \[D_{k,j}=\{(\xi, \tau)\in \R \times \R: \xi
\in \widetilde{I}_k, \tau-\omega(\xi)\in \widetilde{I}_j\},\quad
D_{k,\leq j}=\cup_{l\leq j}D_{k,l}.\] For $k\in \Z_+$ we define the
dyadic $X^{s,b}$-type normed spaces $X_k(\R^2)$:
\begin{eqnarray}\label{eq:Xk}
X_k&=&\{f\in L^2(\R^2): f(\xi,\tau) \mbox{ is supported in }
\widetilde
{I}_k\times\R  \mbox{ ($\wt{I}_{\leq 0}\times \R$ if $k=0$)} \nonumber\\
&&\mbox{ and }\norm{f}_{X_k}:=\sum_{j=0}^\infty
2^{j/2}\norm{\eta_j(\tau-w(\xi))\cdot
f(\xi,\tau)}_{L^2_{\xi,\tau}}<\infty\}.
\end{eqnarray}
These $l^1$-type $X^{s,b}$ structures were first introduced in
\cite{Tataru} and used in \cite{IK, IKT, Taoscatteringgkdv, GW}.

The definition shows easily that if $k\in \Z_+$ and $f_k\in X_k$
then
\begin{eqnarray}\label{eq:pXk1}
\normo{\int_{\R}|f_k(\xi,\tau')|d\tau'}_{L_\xi^2}\les
\norm{f_k}_{X_k}.
\end{eqnarray}
Moreover, it is easy to see (see \cite{Guo}) that if $k\in \Z_+$,
$l\in \Z_+$, and $f_k\in X_k$ then
\begin{eqnarray}\label{eq:pXk2}
&&\sum_{j=l+1}^\infty 2^{j/2}\normo{\eta_j(\tau-\omega(\xi))\cdot
\int_{\R}|f_k(\xi,\tau')|\cdot
2^{-l}(1+2^{-l}|\tau-\tau'|)^{-4}d\tau'}_{L^2}\nonumber\\
&&+2^{l/2}\normo{\eta_{\leq l}(\tau-\omega(\xi)) \int_{\R}
|f_k(\xi,\tau')| 2^{-l}(1+2^{-l}|\tau-\tau'|)^{-4}d\tau'}_{L^2}\les
\norm{f_k}_{X_k}.
\end{eqnarray}
In particular, if $k\in \Z_+$, $l\in \Z_+$, $t_0\in \R$, $f_k \in
X_k$, and $\gamma\in \Sch(\R)$, then
\begin{eqnarray}\label{eq:pXk3}
\norm{\ft[\gamma(2^l(t-t_0))\cdot \ft^{-1}(f_k)]}_{X_k}\les
\norm{f_k}_{X_k}.
\end{eqnarray}

As in \cite{IKT} at frequency $2^k$ we will use the $X^{s, b}$
structure given by the $X_k$ norm, uniformly on the
$2^{-[(1-\alpha)k]}$ time scale. We will explain briefly why we use
this scale in the next section. For $k\in \Z_+$ we define the normed
spaces
\begin{eqnarray*}
&& F_k=\left\{
\begin{array}{l}
f\in L^2(\R^2): \widehat{f}(\xi,\tau) \mbox{ is supported in }
\widetilde{I}_k\times\R \mbox{ ($\wt{I}_{\leq 0}\times \R$ if $k=0$)} \\
\mbox{ and }\norm{f}_{F_k}=\sup\limits_{t_k\in \R}\norm{\ft[f\cdot
\eta_0(2^{[(1-\alpha)k]}(t-t_k))]}_{X_k}<\infty
\end{array}
\right\},
\\
&&N_k=\left\{
\begin{array}{l}
f\in L^2(\R^2): \supp \widehat{f}(\xi,\tau) \subset
\widetilde{I}_k\times\R \mbox{ ($\wt{I}_{\leq 0}\times \R$ if $k=0$)} \mbox{ and } \norm{f}_{N_k}=\\
\sup\limits_{t_k\in
\R}\norm{(\tau-\omega(\xi)+i2^{[(1-\alpha)k]})^{-1}\ft[f\cdot
\eta_0(2^{[(1-\alpha)k]}(t-t_k))]}_{X_k}<\infty
\end{array}
\right\}.
\end{eqnarray*}
We see from the definitions that we still use $X^{s,b}$ structure on
the whole interval for the low frequency. Since the spaces $F_k$ and
$N_k$ are defined on the whole line, we define then local versions
of the spaces in standard ways. For $T\in (0,1]$ we define the
normed spaces
\begin{eqnarray*}
F_k(T)&=&\{f\in C([-T,T]:L^2): \norm{f}_{F_k(T)}=\inf_{\wt{f}=f
\mbox{ in } \R\times [-T,T]}\norm{\wt f}_{F_k}\};\\
N_k(T)&=&\{f\in C([-T,T]:L^2): \norm{f}_{N_k(T)}=\inf_{\wt{f}=f
\mbox{ in } \R\times [-T,T]}\norm{\wt f}_{N_k}\}.
\end{eqnarray*}
We assemble these dyadic spaces in a Littlewood-Paley manner. For
$s\geq 0$ and $T\in (0,1]$, we define the normed spaces
\begin{eqnarray*}
&&F^{s}(T)=\left\{ u:\
\norm{u}_{F^{s}(T)}^2=\sum_{k=1}^{\infty}2^{2sk}\norm{P_k(u)}_{F_k(T)}^2+\norm{P_{\leq
0}(u)}_{F_0(T)}^2<\infty \right\},
\\
&&N^{s}(T)=\left\{ u:\
\norm{u}_{N^{s}(T)}^2=\sum_{k=1}^{\infty}2^{2sk}\norm{P_k(u)}_{N_k(T)}^2+\norm{P_{\leq
0}(u)}_{N_0(T)}^2<\infty \right\}.
\end{eqnarray*}
We define the dyadic energy space. For $s\geq 0$ and $u\in
C([-T,T]:H^\infty)$ we define
\begin{eqnarray*}
\norm{u}_{E^{s}(T)}^2=\norm{P_{\leq 0}(u(0))}_{L^2}^2+\sum_{k\geq
1}\sup_{t_k\in [-T,T]}2^{2sk}\norm{P_k(u(t_k))}_{L^2}^2.
\end{eqnarray*}

As in \cite{IKT}, for any $k\in \Z_+$ we define the set $S_k$ of
$k-acceptable$ time multiplication factors
\begin{eqnarray}
S_k=\{m_k:\R\rightarrow \R: \norm{m_k}_{S_k}=\sum_{j=0}^{10}
2^{-j[(1-\alpha)k]}\norm{\partial^jm_k}_{L^\infty}< \infty\}.
\end{eqnarray}
For instance, $\eta(2^{[(1-\alpha)k]}t) \in S_k$ for any $\eta$
satisfies $\norm{\partial_x^j \eta}_{L^\infty}\leq C$ for
$j=0,1,2,\ldots, 10$. Direct estimates using the definitions and
\eqref{eq:pXk2} show that for any $s\geq 0$ and $T\in (0,1]$
\begin{eqnarray}\label{eq:Sk}
\left \{
\begin{array}{l}
\norm{\sum_{k\in \Z_+} m_k(t)\cdot P_k(u)}_{F^{s}(T)}\les (\sup_{k\in \Z_+}\norm{m_k}_{S_k})\cdot \norm{u}_{F^{s}(T)};\\
\norm{\sum_{k\in \Z_+} m_k(t)\cdot P_k(u)}_{N^{s}(T)}\les
(\sup_{k\in \Z_+}\norm{m_k}_{S_k})\cdot \norm{u}_{N^{s}(T)};\\
\norm{\sum_{k\in \Z_+} m_k(t)\cdot P_k(u)}_{E^{s}(T)}\les
(\sup_{k\in \Z_+}\norm{m_k}_{S_k})\cdot \norm{u}_{E^{s}(T)}.
\end{array}
\right.
\end{eqnarray}

\section{A Symmetric Estimate}

In this section we prove symmetric estimates which will be used to
prove bilinear estimates. For $\xi_1,\xi_2 \in \R$ and $\omega:\R
\rightarrow \R$ as in \eqref{eq:dr} let
\begin{equation}\label{eq:reso}
\Omega(\xi_1,\xi_2)=\omega(\xi_1)+\omega(\xi_2)-\omega(\xi_1+\xi_2).
\end{equation}
This is the resonance function that plays a crucial role in the
bilinear estimate of the $X^{s,b}$-type space. See \cite{Taokz} for
a perspective discussion. For compactly supported nonnegative
functions $f,g,h\in L^2(\R\times \R)$ let
\begin{eqnarray*}
J(f,g,h)=\int_{\R^4}f(\xi_1,\mu_1)g(\xi_2,\mu_2)h(\xi_1+\xi_2,\mu_1+\mu_2+\Omega(\xi_1,\xi_2))d\xi_1d\xi_2d\mu_1d\mu_2.
\end{eqnarray*}

\begin{lemma}\label{lemsymes}
Assume $k_i \in \Z$, $j_i\in \Z_+$,
$N_i=2^{k_i},L_i=2^{j_i},i=1,2,3$. Let $f_{k_i,j_i}\in L^2(\R\times
\R)$ are nonnegative functions supported in
$[2^{k_i-1},2^{k_i+1}]\times \widetilde{I}_{j_i}, \ i=1,\ 2,\ 3$.
Then

(a) For any $k_1, k_2, k_3\in \Z$ and $j_1,j_2,j_3\in \Z_+$,
\begin{equation}
J(f_{k_1,j_1},f_{k_2,j_2},f_{k_3,j_3})\leq C
2^{j_{min}/2}2^{k_{min}/2} \prod_{i=1}^3\norm{f_{k_i,j_i}}_{L^2}.
\end{equation}

(b) If $N_{min}\ll N_{med}\sim N_{max}$ and $(k_i,j_i)\neq
(k_{min},j_{max})$ for all $i=1,2,3$,
\begin{equation}\label{eq:lemsymesrb}
J(f_{k_1,j_1},f_{k_2,j_2},f_{k_3,j_3})\leq C
2^{(j_{min}+j_{med})/2}2^{-(1+\alpha)k_{max}/2}\prod_{i=1}^3\norm{f_{k_i,j_i}}_{L^2};
\end{equation}
If $N_{min}\ll N_{med}\sim N_{max}$ and $(k_i,j_i)=
(k_{min},j_{max})$ for some $i\in \{1,2,3\}$,
\begin{equation}
J(f_{k_1,j_1},f_{k_2,j_2},f_{k_3,j_3})\leq C
2^{(j_{min}+j_{med})/2}2^{-\alpha k_{max}/2}2^{-k_{min}/2}
\prod_{i=1}^3\norm{f_{k_i,j_i}}_{L^2}.
\end{equation}

(c) For any $k_1,k_2,k_3\in \Z$ with $N_{min}\sim N_{med}\sim
N_{max}\gg 1$ and $j_1,j_2,j_3\in \Z_+$
\begin{eqnarray}
J(f_{k_1,j_1},f_{k_2,j_2},f_{k_3,j_3})\leq C
2^{j_{min}/2}2^{j_{med}/4}2^{-\alpha k_{max}/4}
\prod_{i=1}^2\norm{f_{k_i,j_i}}_{L^2}.
\end{eqnarray}
\end{lemma}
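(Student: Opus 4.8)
The plan is to estimate $J(f_{k_1,j_1},f_{k_2,j_2},f_{k_3,j_3})$ by passing to the Fourier side and exploiting the convolution structure together with the constraint imposed by the weight functions supported in $\widetilde I_{j_i}$. The starting point is the elementary observation that if we set $f=f_{k_1,j_1}$, $g=f_{k_2,j_2}$ and substitute, then $J$ is a bilinear-type integral in which the $\mu$-variables are essentially free except that $\mu_1+\mu_2+\Omega(\xi_1,\xi_2)$ must lie in $\widetilde I_{j_3}$ while $\mu_1\in\widetilde I_{j_1}$, $\mu_2\in\widetilde I_{j_2}$. I would first reduce to the basic Cauchy–Schwarz bound: fixing two of the three functions and applying Cauchy–Schwarz in the remaining integration variables, one gets a factor equal to the square root of the measure of the region where the integrand is supported. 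For part (a), the cleanest route is to write $J$ as $\int h(\xi_3,\mu_3)\,K(\xi_3,\mu_3)\,d\xi_3 d\mu_3$ where $K$ is the convolution of $f$ and $g$ along the surface $\mu_3=\mu_1+\mu_2+\Omega$; then $\|K\|_{L^2}\le \|f\|_{L^1_{\mu}L^2_\xi}\|g\|_{L^2}$-type estimates, combined with the fact that $f$ is supported in a $\mu$-interval of length $\sim 2^{j_1}$ and a $\xi$-interval of length $\sim 2^{k_1}$, give the $2^{j_{min}/2}2^{k_{min}/2}$ gain after optimizing which variable plays which role. The key point is that one is always free to put the minimum of the $j_i$'s and the minimum of the $k_i$'s into the "$L^1$" slot.

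For part (b), the improvement from $2^{k_{min}/2}$ to $2^{-(1+\alpha)k_{max}/2}$ comes entirely from the resonance function. When $N_{min}\ll N_{med}\sim N_{max}$, a direct computation with $\omega(\xi)=-\xi|\xi|^{1+\alpha}$ shows that $|\partial_{\xi_1}\Omega(\xi_1,\xi_2)|\sim N_{max}^{1+\alpha}$ on the relevant region (this is the standard "large derivative of the resonance" phenomenon; the $(1+\alpha)$ reflects the order of the dispersion). Therefore, after fixing $\xi_2$ and the $\mu$-variables, the change of variables $\xi_1\mapsto \mu_1+\mu_2+\Omega(\xi_1,\xi_2)$ has Jacobian of size $N_{max}^{1+\alpha}$, so the image of the $2^{j_3}$-interval pulls back to a $\xi_1$-set of measure $\lesssim 2^{j_3}N_{max}^{-(1+\alpha)}$; Cauchy–Schwarz in this set yields a factor $2^{j_3/2}N_{max}^{-(1+\alpha)/2}$ rather than $2^{k_{min}/2}$, which combined with the $2^{j_{min}/2}$ from the remaining $\mu$-integration gives $2^{(j_{min}+j_{med})/2}2^{-(1+\alpha)k_{max}/2}$ provided the large-modulation factor $2^{j_3}=2^{j_{max}}$ is the one attached to the high-frequency (i.e. non-minimal) function — this is exactly the hypothesis $(k_i,j_i)\ne(k_{min},j_{max})$. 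In the excluded case, the change of variables must instead be done in the low-frequency variable $\xi_{min}$, where $|\partial_{\xi_{min}}\Omega|\sim N_{max}^\alpha$ only (one derivative hits the low frequency and produces $|\xi_{min}|^{\alpha}\cdot\ldots$ — more precisely one loses a power $N_{min}$ and the symbol contributes $N_{max}^\alpha$), so the pullback set has measure $\lesssim 2^{j_{max}}N_{max}^{-\alpha}$ but is contained in a $\xi_{min}$-interval, and one must also use the trivial bound $|\xi_{min}|\le N_{min}$; taking the geometric mean produces the stated $2^{-\alpha k_{max}/2}2^{-k_{min}/2}$.

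For part (c), where all three frequencies are comparable and $\gg 1$, the resonance function can degenerate (it vanishes to second order along the diagonal), so one only extracts a square-root of a square-root gain. Here I would use a two-step argument: first apply the part-(a) type Cauchy–Schwarz to handle the $\mu_3$-integration, producing $2^{j_{min}/2}$; then for the remaining two-dimensional integral in $(\xi_1,\xi_2)$ against the output function, use the second-derivative bound $|\partial^2_{\xi}\Omega|\sim N_{max}^{\alpha}$ (valid since $\omega''(\xi)\sim |\xi|^{\alpha}$) together with a sublevel-set / van der Corput-type estimate to convert the constraint "$\mu_1+\mu_2+\Omega\in\widetilde I_{j_3}$" into a set of measure $\lesssim (2^{j_{med}}N_{max}^{-\alpha})^{1/2}$ in one of the $\xi$ variables, giving the $2^{j_{med}/4}2^{-\alpha k_{max}/4}$ factor. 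The main obstacle I anticipate is part (c): the bilinear-to-$L^2$ estimate there genuinely requires a second-order stationary phase / sublevel-set input rather than the clean linear change of variables that suffices for (b), and one must be careful that the diagonal $\xi_1\approx\xi_2$ (where even $\Omega''$ can be small) does not destroy the bound — this should be handled either by a further dyadic decomposition in $|\xi_1-\xi_2|$ or by invoking the fact that on that sub-region a different pair of variables has good second derivative. I would also need to track carefully that the two norms retained on the right-hand side of (c) are $\|f_{k_1,j_1}\|_{L^2}\|f_{k_2,j_2}\|_{L^2}$ and that the $h$-factor is the one absorbed, which dictates the order in which the Cauchy–Schwarz steps are applied.
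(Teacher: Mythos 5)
Your outline for (a) and for the main case of (b) matches the paper's proof: Cauchy--Schwarz in the $\mu$-variables to harvest $2^{j_{min}/2}$, then the change of variables $(\xi_1,\xi_2)\mapsto(\xi_1+\xi_2,\Omega)$ whose Jacobian $|\omega'(\xi_1)-\omega'(\xi_2-\xi_1)|\sim N_{max}^{1+\alpha}$ in the high$\times$low regime produces the $2^{-(1+\alpha)k_{max}/2}$ gain. For the exceptional case of (b) your stated bound is correct but the mechanism you describe is garbled: no geometric mean of a sublevel-set bound with the trivial interval bound is taken, and the change of variables is not ``done in the low-frequency variable.'' When the function carrying the resonance shift is supported at the low frequency, one changes variables in one of the two \emph{high}-frequency variables with their (small) sum held fixed; the Jacobian is $|\omega'(\xi_2)-\omega'(\xi_3)|\sim\bigl||\xi_2|^{1+\alpha}-|\xi_3|^{1+\alpha}\bigr|\sim N_{max}^{\alpha}N_{min}$, because opposite signs force $||\xi_2|-|\xi_3||=|\xi_2+\xi_3|\sim N_{min}$, and a single change of variables then gives $(N_{max}^{\alpha}N_{min})^{-1/2}=2^{-\alpha k_{max}/2}2^{-k_{min}/2}$ directly.

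For (c) you propose a genuinely different route. The paper splits into $\xi_1\xi_2<0$, where $|\omega'(\xi_1)-\omega'(\xi_2)|\sim N^{1+\alpha}$ and the case-(b) argument applies, and $\xi_1\xi_2>0$, which it further decomposes into $|\xi_1-\xi_2|\le R$ (handled by the part-(a) argument with the $\xi$-measure $R$ in place of $2^{k_{min}}$) and $|\xi_1-\xi_2|\ge R$ (handled by Herr's bilinear Strichartz estimate with the weight $||\xi_1|^{1+\alpha}-|\xi_2|^{1+\alpha}|^{1/2}\gtrsim N^{\alpha/2}R^{1/2}$), optimizing $R=2^{-\alpha k_{max}/2}2^{j_{med}/2}$. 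Your sublevel-set alternative via $\partial^2\Omega$ is viable and avoids the bilinear Strichartz input, but two points need repair. First, you locate the degeneracy in the wrong place: on the diagonal $\xi_1\approx\xi_2$ in the same-sign region, $\partial_{\xi_1}^2[\Omega(\xi_1,\xi-\xi_1)]=\omega''(\xi_1)+\omega''(\xi-\xi_1)$ does \emph{not} degenerate (it is $\sim N^{\alpha}$, since $\omega''$ is odd and the two arguments share a sign); it is the \emph{first} derivative that vanishes there. The second derivative can only degenerate for opposite signs, so the sign split must come first, and in that region the first-derivative change of variables already suffices. Second, your two-step scheme --- extract $2^{j_{min}/2}$ from the $\mu$-integration, \emph{then} apply the sublevel-set bound --- fails in that order, because after Cauchy--Schwarz in the $\mu$'s the constraint $\mu_1+\mu_2+\Omega\in\widetilde I_{j_3}$ is lost. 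One must run the Cauchy--Schwarz jointly: pull out the factor with $j_i=j_{max}$ and bound the measure of the joint support set of the other two by $2^{j_{min}}\cdot(2^{j_{med}}N^{-\alpha})^{1/2}$, whose square root gives $2^{j_{min}/2}2^{j_{med}/4}N^{-\alpha/4}$. With those corrections your approach closes, and it is a legitimate alternative to the paper's argument.
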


\begin{proof}
Simple changes of variables in the integration and the observation
that the function $\omega$ is odd show that
\begin{eqnarray*}
|J(f,g,h)|=|J(g,f,h)|=|J(f,h,g)|=|J(\widetilde{f},g,h)|,
\end{eqnarray*}
where $\widetilde{f}(\xi,\mu)=f(-\xi,-\mu)$. Let
$A_{k_i}(\xi)=[\int_\R |f_{k_i,j_i}(\xi,\mu)|^2d\mu]^{1/2}$,
$i=1,2,3$. Using the Cauchy-Schwartz inequality and the support
properties of the functions $f_{k_i,j_i}$, we obtain
\begin{eqnarray*}
J(f_{k_1,j_1},f_{k_2,j_2},f_{k_3,j_3})&\les& 2^{j_{min}/2}\int_{\R^2}A_{k_1}(\xi_1)A_{k_2}(\xi_2)A_{k_3}(\xi_1+\xi_2)d\xi_1d\xi_2\\
&\les&
2^{k_{min}/2}2^{j_{min}/2}\prod_{i=1}^3\norm{f_{k_i,j_i}}_{L^2},
\end{eqnarray*}
which is part (a), as desired.

For part (b), in view of the support properties of the functions, it
is easy to see that $J(f_{k_1,j_1},f_{k_2,j_2},f_{k_3,j_3})\equiv 0$
unless
\begin{equation}\label{eq:freeq}
N_{max}\sim N_{med}.
\end{equation}
From symmetry we may assume $k_1\leq k_2\leq k_3$ and we have three
cases. If $j_3=j_{max}$ then we will prove that if
$g_i:\R\rightarrow \R_+$ are $L^2$ functions supported in
$\widetilde{I}_{k_i}$, $i=1,2$, and $g: \R^2\rightarrow \R_+$ is an
$L^2$ function supported in $\widetilde{I}_{k_3}\times
\widetilde{I}_{j_4}$, then
\begin{eqnarray}\label{eq:lemsymesb}
\int_{\R^3}g_1(\xi_1)g_2(\xi_2)g(\xi_1+\xi_2,\Omega(\xi_1,\xi_2))d\xi_1d\xi_2\les
2^{-(1+\alpha)k_{max}/2}\norm{g_1}_{L^2}\norm{g_2}_{L^2}\norm{g}_{L^2}.
\end{eqnarray}
This suffices for \eqref{eq:lemsymesrb} by Cauchy-Schwarz
inequality.

To prove \eqref{eq:lemsymesb}, we observe that since $N_1\ll N_2$
then $|\xi_1+\xi_2|\sim |\xi_2|$. By change of variable
$\xi'_1=\xi_1$, $\xi'_2=\xi_1+\xi_2$, we get that the left-hand side
of \eqref{eq:lemsymesb} is bounded by
\begin{eqnarray}\label{eq:lemsymesb2}
\int_{|\xi_1|\sim N_1,|\xi_2|\sim
N_2}g_1(\xi_1)g_2(\xi_2-\xi_1)g(\xi_2,\Omega(\xi_1,\xi_2-\xi_1))d\xi_1d\xi_2.
\end{eqnarray}
Note that in the integration area we have
\begin{eqnarray*}
\big|\frac{\partial}{\partial_{\xi_1}}\left[\Omega(\xi_1,\xi_2-\xi_1)\right]\big|=|\omega'(\xi_1)-\omega'(\xi_2-\xi_1)|\sim
N_2^{1+\alpha},
\end{eqnarray*}
where we use the fact $|\omega'(\xi)|=(2+\alpha)|\xi|^{1+\alpha}$
and $N_1\ll N_2$. By change of variable
$\mu_2=\Omega(\xi_1,\xi_2-\xi_1)$ we get that \eqref{eq:lemsymesb2}
is bounded by
\begin{eqnarray}
2^{-(1+\alpha)k_{max}/2}\norm{g_1}_{L^2}\norm{g_2}_{L^2}\norm{g}_{L^2}.
\end{eqnarray}

If $j_2=j_{max}$ then this case is identical to the case
$j_3=j_{max}$ in view of \eqref{eq:freeq}. If $j_1=j_{max}$ it
suffices to prove that if $g_i:\R\rightarrow \R_+$ are $L^2$
functions supported in $I_{k_i}$, $i=2,3$, and $g: \R^2\rightarrow
\R_+$ is an $L^2$ function supported in $\widetilde{I}_{k_1}\times
\widetilde{I}_{j_1}$, then
\begin{eqnarray}\label{eq:lemsymesb3}
\int_{\R^2}g_2(\xi_2)g_3(\xi_3)g(\xi_2+\xi_3,\Omega(\xi_2,\xi_3))d\xi_2d\xi_3\les
2^{-(\alpha
k_{max}-k_{min})/2}\norm{g_2}_{L^2}\norm{g_3}_{L^2}\norm{g}_{L^2}.
\end{eqnarray}
Indeed, by change of variables $\xi'_2=\xi_2,\xi'_3=\xi_2+\xi_3$ and
noting that in the integration area $|\xi'_2|\sim
2^{k_2},|\xi'_3|\sim 2^{k_1}$,
\begin{eqnarray*}
\big|\frac{\partial}{\partial_{\xi'_2}}\left[\Omega(\xi'_2,\xi'_3-\xi'_2)\right]\big|=|\omega'(\xi'_2)-\omega'(\xi'_3-\xi'_2)|\sim
N_2^{\alpha}N_{1},
\end{eqnarray*}
we get from Cauchy-Schwarz inequality that
\begin{eqnarray}
&&\int_{\R^2}g_2(\xi_2)g_3(\xi_3)g(\xi_2+\xi_3,\Omega(\xi_2,\xi_3))d\xi_2d\xi_3\nonumber\\
&&\les \int_{|\xi'_2|\sim 2^{k_2},|\xi'_3|\sim 2^{k_1}}g_2(\xi'_2)g_3(\xi'_3-\xi'_2)g(\xi'_3,\Omega(\xi'_2,\xi'_3-\xi'_2))d\xi'_2d\xi'_3\nonumber\\
&&\les2^{-\alpha
k_{max}/2}2^{-k_{min}/2}\norm{g_2}_{L^2}\norm{g_3}_{L^2}\norm{g}_{L^2},
\end{eqnarray}
which is \eqref{eq:lemsymesb3} as desired.

We prove now part (c). For simplicity of notations we write
$f_i=f_{k_i,j_i},i=1,2,3$. From symmetries we may assume
$j_3=j_{max}$ and $|\xi_1|\leq |\xi_2|$. Then writing
$d\sigma=d\xi_1d\xi_2d\mu_1d\mu_2$ we have
\begin{eqnarray}
&&\int_{\R^4}f_1(\xi_1,\mu_1)f_2(\xi_2,\mu_2)f_3(\xi_1+\xi_2,\mu_1+\mu_2+\Omega(\xi_1,\xi_2))d\xi_1d\xi_2d\mu_1d\mu_2
\nonumber\\
&=&\big(\int_{\xi_1\cdot \xi_2<0}+\int_{\xi_1\cdot
\xi_2>0}\big)f_1(\xi_1,\mu_1)f_2(\xi_2,\mu_2)
f_3(\xi_1+\xi_2,\mu_1+\mu_2+\Omega(\xi_1,\xi_2))d\sigma\nonumber\\
&:=&I+II.
\end{eqnarray}
For the contributions of $I$, noting that if $\xi_1\cdot \xi_2<0$
and $|\xi_1|\leq |\xi_2|$ then
\begin{eqnarray*}
\big|\frac{\partial}{\partial_{\xi_1}}\left[\Omega(\xi_1,\xi_2-\xi_1)\right]\big|=|\omega'(\xi_1)-\omega'(\xi_2-\xi_1)|\sim
N_2^{1+\alpha},
\end{eqnarray*}
thus by change of variable $\xi_2'=\xi_2-\xi_1$ and we get as for
the first inequality in part (b) that
\begin{eqnarray}
I\les
2^{(j_{min}+j_{med})/2}2^{-(1+\alpha)k_{max}/2}\prod_{i=1}^3\norm{f_{k_i,j_i}}_{L^2}.
\end{eqnarray}
Interpolating with part (a), we immediately get the bound
\eqref{eq:lemsymesc} for this term.

For the contribution of $II$, we break it into two parts
\begin{eqnarray}
II&=& \big(\int_{|\xi_1-\xi_2|\leq R}+\int_{|\xi_1-\xi_2|\geq
R}\big)f_1(\xi_1,\mu_1)f_2(\xi_2,\mu_2)\\
&&\quad \cdot
f_3(\xi_1+\xi_2,\mu_1+\mu_2+\Omega(\xi_1,\xi_2))d\sigma\nonumber\\
&:=&II_1+II_2,
\end{eqnarray}
where $R$ will be determined later. For $II_1$, we may assume
$j_1=j_{min}$ and as in part (a) we get
\begin{eqnarray}
II_1&=&\int_{|\xi_1|\leq R}f_1(\xi_1+\xi_2,\mu_1)f_2(\xi_2,\mu_2)
f_3(\xi_1+2\xi_2,\mu_1+\mu_2+\Omega(\xi_1+\xi_2,\xi_2))d\sigma
\nonumber \\
&\les& 2^{j_{min}/2}R^{1/2} \prod_{i=1}^3\norm{f_i}_{L^2}.
\end{eqnarray}
For $II_2$ we will use the bilinear Strichartz estimate (see Lemma
3.4, \cite{Herr}): for any $u_1,u_2\in \Sch$ then
\begin{eqnarray}\label{eq:bilinearstri}
\normo{\int_{\xi=\xi_1+\xi_2}\big||\xi_1|^{1+\alpha}-|\xi_2|^{1+\alpha}\big|_1^{1/2}
e^{it\omega(\xi_1)}\wh{u_1}(\xi_1)e^{it\omega(\xi_2)}\wh{u_1}(\xi_2)}_{L^2_{\xi,t}}\les
\norm{u_1}_{L^2}\norm{u_2}_{L^2},
\end{eqnarray}
where $|x|_1=|x|\cdot 1_{\geq 1}(|x|)$. In the integral area of
$II_2$, we have
\[||\xi_1|^{1+\alpha}-|\xi_2|^{1+\alpha}|^{1/2}\ges N_2^{\alpha/2}|\xi_1-\xi_2|^{1/2}\ges N_2^{\alpha/2}R^{1/2}.\]
We will choose $R$ such that $N_2^{\alpha/2}R^{1/2}\ges 1$. Thus we
get
\begin{eqnarray*}
II_2&\les&
N_2^{-\alpha/2}R^{-1/2}\int_{\R^4}\big||\xi_1|^{1+\alpha}-|\xi_2|^{1+\alpha}\big|_1^{1/2}f_1(\xi_1,\mu_1)f_2(\xi_2,\mu_2)\\
&&\cdot f_3(\xi_1+\xi_2,\mu_1+\mu_2+\Omega(\xi_1,\xi_2))d\sigma \\
&\les& N_2^{-\alpha/2}R^{-1/2}\int_{\R^4}\big||\xi_1|^{1+\alpha}-|\xi_2|^{1+\alpha}\big|_1^{1/2}f_1(\xi_1,\mu_1-\omega(\xi_1))\\
&&\cdot
f_2(\xi_2,\mu_2-\omega(\xi_2))f_3(\xi_1+\xi_2,\mu_1+\mu_2-\omega(\xi_1+\xi_2))d\sigma.
\end{eqnarray*}
Using Cauchy-Schwartz inequality and Plancherel's equality we get
that $II_2$ is dominated by
\begin{eqnarray*}
&&N_2^{-\alpha/2}R^{-1/2}\norm{f_3}_{L^2}\\
&&\cdot  \normo{\int_{\xi=\xi_1+\xi_2,\mu=\mu_1+\mu_2}\big||\xi_1|^{1+\alpha}-|\xi_2|^{1+\alpha}\big|_1^{1/2}f_1(\xi_1,\mu_1-\omega(\xi_1))f_2(\xi_2,\mu_2-\omega(\xi_2))}_{L_{\xi,\mu}^2}\\
&&\les \quad
N_2^{-\alpha/2}R^{-1/2}\norm{f_3}_{L^2}\\
&&\quad \norm{\int
\eta_{j_1}(\mu_1)\eta_{j_2}(\mu_2)e^{it(\mu_1+\mu_2)}\cdot\int_{\xi=\xi_1+\xi_2}\big||\xi_1|^{1+\alpha}-|\xi_2|^{1+\alpha}\big|_1^{1/2}\\
&&\quad \quad \cdot
e^{it\omega(\xi_1)}f_1(\xi_1,\mu_1)e^{it\omega(\xi_2)}f_2(\xi_2,\mu_2)d\mu_1d\mu_2}_{L_{\xi,t}^2}\\
&&\les N_2^{-\alpha/2}R^{-1/2} 2^{j_{min}/2}2^{j_{med}/2}
\prod_{i=1}^3\norm{f_i}_{L^2},
\end{eqnarray*}
where we used \eqref{eq:bilinearstri} in the last inequality.
Therefore, taking $R=2^{-\alpha k_{max}/2}2^{j_{med}/2}$ we complete
the proof of part (c).
\end{proof}

We restate now Lemma \ref{lemsymes} in a form that is suitable for
the trilinear estimates in the next sections.
\begin{corollary}\label{cor42}
Assume $k_1,k_2,k_3\in \Z$, $j_1,j_2,j_3\in \Z_+$, and
$f_{k_i,j_i}\in L^2(\R\times \R)$ are functions supported in
$\{(\xi,\tau):\xi \in [2^{k_i-1},2^{k_i+1}], \tau-\omega(\xi)\in
\wt{I}_j\}$, $i=1,2$.

(a) For any $k_1,k_2,k_3\in \Z$ and $j_1,j_2,j_3\in \Z_+$,
\begin{eqnarray}
\norm{1_{{D}_{k_3,j_3}}(\xi,\tau)(f_{k_1,j_1}*f_{k_2,j_2})}_{L^2}\les
2^{k_{min}/2}2^{j_{min}/2}\prod_{i=1}^2\norm{f_{k_i,j_i}}_{L^2}.
\end{eqnarray}

(b) For any $k_1,k_2,k_3\in \Z$ with $N_{min}\ll N_{med}\sim
N_{max}$, and $j_1,j_2,j_3\in \Z_+$, if for some $i\in \{1,2,3\}$
such that $(k_i,j_i)=(k_{min},j_{max})$ then
\begin{eqnarray}
\norm{1_{{D}_{k_3,j_3}}(\xi,\tau)(f_{k_1,j_1}*f_{k_2,j_2})}_{L^2}\les
2^{(j_{min}+j_{med})/2}2^{-\alpha
k_{max}/2}2^{-k_{min}/2}\prod_{i=1}^2\norm{f_{k_i,j_i}}_{L^2}.
\end{eqnarray}
else we have
\begin{eqnarray}
\norm{1_{{D}_{k_3,j_3}}(\xi,\tau)(f_{k_1,j_1}*f_{k_2,j_2})}_{L^2}\les
2^{(j_{min}+j_{med})/2}2^{-(1+\alpha)k_{max}/2}\prod_{i=1}^2\norm{f_{k_i,j_i}}_{L^2}.
\end{eqnarray}

(c) For any $k_1,k_2,k_3\in \Z$ with $N_{min}\sim N_{med}\sim
N_{max}\gg 1$ and $j_1,j_2,j_3\in \Z_+$
\begin{eqnarray}
\norm{1_{{D}_{k_3,j_3}}(\xi,\tau)(f_{k_1,j_1}*f_{k_2,j_2})}_{L^2}\les
2^{j_{min}/2}2^{j_{med}/4}2^{-\alpha k_{max}/4}
\prod_{i=1}^2\norm{f_{k_i,j_i}}_{L^2}.
\end{eqnarray}
\end{corollary}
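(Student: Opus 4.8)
\section*{Proof proposal for Corollary \ref{cor42}}

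The plan is to deduce each bound in Corollary~\ref{cor42} from the corresponding part of Lemma~\ref{lemsymes} by duality together with a substitution that removes the dispersion relation. Fix the data as in the statement and put $F:=1_{D_{k_3,j_3}}(\xi,\tau)\,(f_{k_1,j_1}*f_{k_2,j_2})$. Since $F$ is supported in $D_{k_3,j_3}$,
\[
\norm{F}_{L^2}=\sup\Big\{\int_{\R^2}F(\xi,\tau)\,h(\xi,\tau)\,d\xi d\tau:\ h\ge 0,\ \supp h\subset D_{k_3,j_3},\ \norm{h}_{L^2}\le 1\Big\},
\]
and using $|f_{k_1,j_1}*f_{k_2,j_2}|\le |f_{k_1,j_1}|*|f_{k_2,j_2}|$ pointwise we may also assume $f_{k_1,j_1},f_{k_2,j_2}\ge 0$; this changes none of the $L^2$ norms appearing on the right-hand sides of the claimed inequalities.

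Next I would convert this pairing into the trilinear form $J$. Define the dispersion-shifted functions $g_{k_i,j_i}(\xi,\mu):=f_{k_i,j_i}(\xi,\mu+\omega(\xi))$ for $i=1,2$ and $\wt h(\xi,\mu):=h(\xi,\mu+\omega(\xi))$; these are nonnegative $L^2$ functions supported in $[2^{k_i-1},2^{k_i+1}]\times\wt I_{j_i}$ (resp. in $[2^{k_3-1},2^{k_3+1}]\times\wt I_{j_3}$, where $F$ is supported), with $\norm{g_{k_i,j_i}}_{L^2}=\norm{f_{k_i,j_i}}_{L^2}$ and $\norm{\wt h}_{L^2}=\norm{h}_{L^2}$, so they meet the hypotheses of Lemma~\ref{lemsymes}. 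Writing out the convolution and changing variables from $(\xi_1,\tau_1,\xi,\tau)$ to $(\xi_1,\xi_2,\mu_1,\mu_2)$ with $\xi_2=\xi-\xi_1$, $\mu_1=\tau_1-\omega(\xi_1)$, $\mu_2=\tau-\tau_1-\omega(\xi_2)$ — a substitution whose Jacobian has modulus $1$ — and using $\omega(\xi_1)+\omega(\xi_2)-\omega(\xi_1+\xi_2)=\Omega(\xi_1,\xi_2)$, one gets
\[
\int_{\R^2}F(\xi,\tau)\,h(\xi,\tau)\,d\xi d\tau=J(g_{k_1,j_1},g_{k_2,j_2},\wt h).
\]
Hence $\norm{F}_{L^2}$ is the supremum of $J(g_{k_1,j_1},g_{k_2,j_2},\wt h)$ over admissible $\wt h$ with $\norm{\wt h}_{L^2}\le 1$.

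It then remains to apply Lemma~\ref{lemsymes} with $\wt h$ in the role of $f_{k_3,j_3}$; since $\norm{\wt h}_{L^2}\le 1$, only $\prod_{i=1}^2\norm{f_{k_i,j_i}}_{L^2}$ survives. Part~(a) of the Corollary is then exactly part~(a) of the Lemma, and part~(c) follows from part~(c) of the Lemma under $N_{min}\sim N_{med}\sim N_{max}\gg1$. For part~(b), under $N_{min}\ll N_{med}\sim N_{max}$ the case split of Lemma~\ref{lemsymes}(b) matches that of the Corollary verbatim: if $(k_i,j_i)=(k_{min},j_{max})$ for some $i$ one uses the second inequality of Lemma~\ref{lemsymes}(b), yielding the factor $2^{-\alpha k_{max}/2}2^{-k_{min}/2}$, and otherwise the first inequality, yielding $2^{-(1+\alpha)k_{max}/2}$. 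I do not expect a genuine obstacle here: all the analytic content lies in Lemma~\ref{lemsymes}, and the only points to verify are that $\tau\mapsto\tau-\omega(\xi)$ is measure-preserving and transports the support sets onto the boxes required by the Lemma, and that the dichotomy in part~(b) is exhaustive.
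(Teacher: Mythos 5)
Your argument is correct and is essentially the paper's own proof: both proceed by $L^2$ duality against a test function supported in $D_{k_3,j_3}$, apply the dispersion shift $(\xi,\tau)\mapsto(\xi,\tau-\omega(\xi))$ to all three functions, identify the resulting pairing with $J$, and then invoke Lemma \ref{lemsymes} part by part. The extra details you supply (nonnegativity reduction, the explicit unit-Jacobian substitution) are exactly the "simple changes of variables" the paper leaves implicit.
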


\begin{proof}
Clearly, we have
\begin{eqnarray}
\norm{1_{{D}_{k_3,j_3}}(\xi,\tau)(f_{k_1,j_1}*f_{k_2,j_2})(\xi,\tau)}_{L^2}=\sup_{\norm{f}_{L^2}=1}\aabs{\int_{D_{k_3,j_3}}
f\cdot f_{k_1,j_1}*f_{k_2,j_2} d\xi d\tau}.
\end{eqnarray}
Let $f_{k_3,j_3}=1_{D_{k_3,j_3}}\cdot f$ and
$f_{k_i,j_i}^\sharp(\xi,\mu)=f_{k_i,j_i}(\xi,\mu+\omega(\xi))$,
$i=1,2,3$. Then the functions $f_{k_i,j_i}^\sharp$ are supported in
$\widetilde{I}_{k_i}\times \wt{I}_{j_i}$ and
$\norm{f_{k_i,j_i}^\sharp}_{L^2}=\norm{f_{k_i,j_i}}_{L^2}$. Using
simple changes of variables, we get that
\[\int_{D_{k_3,j_3}} f\cdot
f_{k_1,j_1}*f_{k_2,j_2} d\xi d\tau =
J(f_{k_1,j_1}^\sharp,f_{k_2,j_2}^\sharp,f_{k_3,j_3}^\sharp).\] Then
Corollary \ref{cor42} follows from Lemma \ref{lemsymes}.
\end{proof}

\begin{remark}\label{symest}
It is easy to see from the proof that Part (a) and the first
inequality in Part (b) of Lemma \ref{lemsymes} (Corollary
\ref{cor42}) also hold if we assume instead $f_{k_i,j_i}$ is
supported in $\wt{I}_{k_i}\times \wt{I}_{\leq j}$ ($D_{k_i,\leq j}$)
for $k_1,k_2,k_3\in \Z_+$.
\end{remark}

\section{Trilinear estimates}

In this section we prove Theorem \ref{thmmdg}. The ingredients are
the estimates for the characterization multiplier obtained in the
last section and $TT^*$ arguments as in \cite{Taokz}. The main
issues reduce to prove the trilinear estimates and we refer the
readers to \cite{KPV} for the other standard details.

\begin{proposition}\label{prop:trilinear}
For all $u_1,u_2,u_3$ on $\R\times \R$ and $0<\epsilon \ll 1$, we
have
\begin{eqnarray}\label{eq:trilinear}
\norm{(u_1u_2u_3)_x}_{X^{1/2-\alpha/4, -1/2+\epsilon}}\les
\prod_{j=1}^3 \norm{u_j}_{X^{1/2-\alpha/4, 1/2+\epsilon}}
\end{eqnarray}
with the implicit constant depending on $\epsilon$.
\end{proposition}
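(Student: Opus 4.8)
The plan is to reduce the trilinear estimate to the symmetric $L^2$ bounds of Lemma~\ref{lemsymes} via a standard duality/Littlewood--Paley decomposition. Writing $u_j=\sum_{k_j}P_{k_j}u_j$ and further decomposing each piece in modulation $L_j=2^{j_j}$ (i.e. into the regions $D_{k_j,j_j}$), and doing the same for the output frequency $k_4$ and modulation $j_4$, it suffices by duality to bound
\[
2^{k_4}2^{(1/2-\alpha/4)k_4}2^{(-1/2+\epsilon)j_4}\prod_{j=1}^3 2^{-(1/2-\alpha/4)k_j}2^{-(1/2+\epsilon)j_j}\cdot \Big|\int f_{k_4,j_4}\cdot \widehat{P_{k_1}u_1}* \widehat{P_{k_2}u_2}*\widehat{P_{k_3}u_3}\Big|
\]
summed over all frequencies and modulations, by $\prod_j\norm{u_j}_{X^{1/2-\alpha/4,1/2+\epsilon}}$. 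The convolution integral of four terms is handled by applying Corollary~\ref{cor42} twice: first pair two of the inputs (say the two highest-modulation inputs among the three, or chosen to optimize), estimating $\norm{1_{D}(\widehat{u_a}*\widehat{u_b})}_{L^2}$, then pair the result against the third input and against $f_{k_4,j_4}$. This produces a factor of the form $2^{k_{\min}^{(1)}/2}2^{j/2}\cdot 2^{k_{\min}^{(2)}/2}2^{j'/2}$ (schematically) times $\prod_{i}\norm{f_{k_i,j_i}}_{L^2}$, where the precise gains depend on which resonance case (parts (a), (b), (c) of the Lemma) applies.

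The core of the argument is then the case analysis on the frequency configuration. By symmetry of the convolution one may order $N_1\le N_2\le N_3$ and distinguish: (i) the \emph{high-high-to-low} and \emph{high-low-to-high} regimes where $N_{\max}\sim N_{\mathrm{med}}\gg N_{\min}$, where one uses part (b) of Corollary~\ref{cor42} — the crucial point being that the resonance function satisfies $|\Omega|\sim N_{\max}^{1+\alpha}$ (or $N_{\max}^\alpha N_{\min}$ in the worst sub-case), which forces one of the modulations $L_i\gtrsim N_{\max}^{\alpha}N_{\min}$ and thereby supplies the derivative loss $2^{k_4}$ at the cost of an $\epsilon$-power of the largest modulation; and (ii) the \emph{comparable-frequency} regime $N_1\sim N_2\sim N_3\gg 1$, where part (c) gives the gain $2^{-\alpha k_{\max}/4}$, which combined with the $s=1/2-\alpha/4$ Sobolev weights and the high-modulation gains from the $L_j$-summation closes the estimate. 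The low-frequency regime $N_{\max}\lesssim 1$ is trivial since there is no derivative loss and part (a) suffices. Throughout, the $\epsilon$ in the exponents $\mp 1/2+\epsilon$ is what makes the geometric series in $j_1,j_2,j_3,j_4$ converge after extracting the $2^{j_{\min}/2}$, $2^{j_{\mathrm{med}}/2}$ (or $2^{j_{\mathrm{med}}/4}$) gains.

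The main obstacle is bookkeeping in case (ii), the fully resonant interaction $N_1\sim N_2\sim N_3$: here there is no smallness from frequency separation, and one must carefully balance the single $2^{-\alpha k_{\max}/4}$ gain from Corollary~\ref{cor42}(c) against the net Sobolev weight $2^{(1/2-\alpha/4)k_4}\prod 2^{-(1/2-\alpha/4)k_j}\sim 2^{-(1/2-\alpha/4)k_{\max}}$ plus the $2^{k_4}\sim 2^{k_{\max}}$ loss, leaving a deficit of $2^{(1/2+\alpha/4-\alpha/4)k_{\max}-(1/2-\alpha/4)k_{\max}}=2^{\alpha k_{\max}/2}$ that must be absorbed; one recovers it using that part~(c) only uses two of the three $L^2$ norms, so the third modulation can be summed freely, and the high-modulation input carries $L\gtrsim N_{\max}^{\alpha/2}\cdot(\text{something})$, giving enough room. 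Verifying this balance precisely — and checking that the exponent threshold $s\ge 1/2-\alpha/4$ is exactly what is needed and not better — is the delicate part; the rest is routine summation of geometric series.
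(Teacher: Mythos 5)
Your overall strategy---duality, dyadic decomposition, and two applications of the symmetric estimate---is the right one, and it is essentially what the paper does, but two of your concrete steps do not work as stated. First, ``apply Corollary~\ref{cor42} twice'' is not a self-contained operation: the output of the first pairing lives at an intermediate frequency $\xi_1+\xi_2$ and an intermediate modulation, both of which must be decomposed and summed, and the gains in Corollary~\ref{cor42} from each of the two applications both reference that shared intermediate modulation, so you cannot simply multiply the two bounds. The paper avoids this by first distributing the derivative via $\langle \xi_4\rangle^{3/2-\alpha/4}\lesssim \langle\xi_4\rangle^{1/2}\sum_{j=1}^3\langle\xi_j\rangle^{1-\alpha/4}$, after which the $[4;\R\times\R]$ multiplier literally factors as a product of two copies of the $[3;\R\times\R]$ multiplier of Proposition~\ref{kpv-2}; the $TT^*$ identity (Cauchy--Schwarz in the intermediate $(\xi,\tau)$ variable) then reduces everything to the single bilinear $L^2$ estimate $\norm{uv}_{L^2}\lesssim\norm{u}_{X^{-1/2,1/2-\epsilon}}\norm{v}_{X^{1/2-\alpha/4,1/2+\epsilon}}$, where all the casework is carried out. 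You need this factorization (or an equivalent almost-orthogonality argument in the intermediate variable) to make the double application legitimate.

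Second, your bookkeeping in the critical cases is off. The resonance bound is $|\Omega(\xi_1,\xi_2)|\sim |\xi|_{max}^{1+\alpha}|\xi|_{min}$ (Lemma~\ref{resoest}), not $N_{max}^{1+\alpha}$ or $N_{max}^{\alpha}N_{min}$. In the comparable-frequency regime the net Sobolev weight is $N^{\alpha/2}$ and two applications of the part~(c) gain $N^{-\alpha/4}$ give exactly $N^{-\alpha/2}$, so the balance is exactly $N^{0}$---there is no ``deficit of $2^{\alpha k_{max}/2}$'' to recover, but there is also no decay in $N$, so the sum over dyadic frequencies is not a convergent geometric series. What closes the argument is (i) the forced lower bound $L_{max}\gtrsim N^{2+\alpha}$ on the pairwise modulations when all three frequencies in a bilinear triple are comparable, which makes the $L$-sums converge uniformly in $N$, and (ii) Schur's test / $\ell^2$ orthogonality in the output frequency to handle the $N$-summation at a uniform (not summable) bound per block. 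Neither of the mechanisms you invoke (``the third modulation can be summed freely,'' ``the high-modulation input carries $L\gtrsim N_{max}^{\alpha/2}$'') is the correct one, and as the entire content of the proposition lives in precisely this case analysis, the proposal as written has a genuine gap there.
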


This type of estimate was systematically studied in \cite{Taokz},
see also \cite{KPV} for an elementary method. We will follow the
idea in \cite{Taokz} to prove Proposition \ref{prop:trilinear}. Let
$Z$ be any abelian additive group with an invariant measure $d\xi$.
In particular, $Z=\R^2$ in this paper. For any $k\geq 2$, Let
$\Gamma_k(Z)$ denote the hyperplane in $\R^k$
\[\Gamma_k(Z):=\{(\xi_1,\ldots, \xi_k)\in Z^k: \xi_1+\ldots+\xi_k=0\}\]
endowed with the induced measure
\[\int_{\Gamma_k(Z)}f:=\int_{Z^{k-1}}f(\xi_1,\ldots, \xi_{k-1},-\xi_1-\ldots-\xi_{k-1})d\xi_1\ldots d\xi_{k-1}.\]
Note that this measure is symmetric with respect to permutation of
the co-ordinates.

A function $m: \Gamma_k(Z)\rightarrow \C$ is said to to be a
$[k;Z]-multiplier$, and we define the norm $\norm{m}_{[k;Z]}$ to be
the best constant such that the inequality
\begin{equation}
\left|\int_{\Gamma_k(Z)}m(\xi)\prod_{j=1}^k f_i(\xi_i)\right|\leq
\norm{m}_{[k;Z]}\prod_{j=1}^k \norm{f_i}_{L^2}
\end{equation}
holds for all test functions $f_i$ on $Z$.

\begin{proof}[Proof of Proposition \ref{prop:trilinear}]
By duality, Plancherel's equality and the definition, it is easy to
see that for \eqref{eq:trilinear}, it suffices to prove
\begin{equation}\label{eq:bi}
\normo{\frac{(\xi_1 + \xi_2 + \xi_3) \langle \xi_4
\rangle^{\frac{1}{2}-\frac{\alpha}{4}}} { \jb{\tau_4 -
\omega(\xi_4)}^{1/2 - \varepsilon} \prod_{j=1}^3 \langle \xi_j
\rangle^{\frac{1}{2}-\frac{\alpha}{4}} \langle \tau_j -
\omega(\xi_j) \rangle^{1/2+\varepsilon} }}_{[4;\R \times \R]}
\lesssim 1.
\end{equation}
As in the proof of Corollary 6.3 \cite{Taokz}, we estimate $|\xi_1 +
\xi_2 + \xi_3|$ by $\langle \xi_4\rangle$. From the inequality
\[\langle \xi_4 \rangle^{\frac{3}{2}-\frac{\alpha}{4}} \lesssim
\langle \xi_4 \rangle^{1/2} \sum_{j=1}^3 \langle \xi_j
\rangle^{1-\frac{\alpha}{4}}\] and symmetry it reduces to show
\[\normo{\frac{\langle \xi_4 \rangle^{1/2} \langle \xi_2 \rangle^{1/2} } {
\langle \xi_1 \rangle^{\frac{1}{2}-\frac{\alpha}{4}} \langle \xi_3
\rangle^{\frac{1}{2}-\frac{\alpha}{4}} \jb{\tau_4 -
\omega(\xi_4)}^{1/2 - \varepsilon} \prod_{j=1}^3 \jb{\tau_j -
\omega(\xi_j)}^{1/2+\varepsilon} } }_{[4;\R \times \R]} \lesssim 1.
\]
We may minorize $\langle \tau_2 - \omega(\xi_{2})
\rangle^{1/2+\varepsilon}$ by $\langle \tau_2
-\omega(\xi_{2})\rangle^{1/2-\varepsilon}$.  But then the estimate
follows from $TT^*$ identity (Lemma 3.7, \cite{Taokz}) and the
following proposition.
\end{proof}

\begin{proposition}\label{kpv-2}
For all $u,v$ on $\R \times \R$ and $0 < \varepsilon \ll 1$, we have
$$ \| uv\|_{L^2} \lesssim
\| u\|_{X^{-1/2,1/2-\varepsilon}} \|
v\|_{X^{1/2-\alpha/4,1/2+\varepsilon}}$$
\end{proposition}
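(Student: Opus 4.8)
The plan is to reduce the bilinear $L^2$ estimate to the symmetric convolution bound of Lemma \ref{lemsymes} (or its restated form Corollary \ref{cor42}) by dyadic decomposition in both the spatial frequencies and the modulation variables. By duality and Plancherel, it suffices to bound $\int f \cdot (g*h)$, where $f$ corresponds to $\overline{uv}$ and $g,h$ to $u,v$ respectively, with weights $\jb{\xi}^{1/2}$ on the $u$-factor, $\jb{\xi}^{-1/2+\alpha/4}$ on the $v$-factor, and $\jb{\tau-\omega(\xi)}^{-1/2+\varepsilon}$ and $\jb{\tau-\omega(\xi)}^{-1/2-\varepsilon}$ on the two modulation weights. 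Write $u=\sum_{k_1,j_1}P_{k_1}u$ localized to $D_{k_1,j_1}$ and similarly for $v$, and decompose the output in $k_3,j_3$; then one must sum
\[
\sum 2^{k_1/2}\,2^{(-1/2+\alpha/4)k_2}\,2^{(-1/2+\varepsilon)j_1}\,2^{(-1/2-\varepsilon)j_2}\;\norm{1_{D_{k_3,j_3}}(f_{k_1,j_1}*f_{k_2,j_2})}_{L^2}
\]
against $\prod\norm{f_{k_i,j_i}}_{L^2}$, and the $k_3,j_3$ sums are handled by almost-orthogonality once a gain is extracted.

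The main case analysis is by the relative sizes of $N_1=2^{k_1}$ and $N_2=2^{k_2}$. In the \emph{high-low} regime $N_{\min}\ll N_{\max}$ one has the resonance lower bound $|\Omega|\sim N_{\max}^{1+\alpha}$, so the largest modulation $L_{\max}$ satisfies $L_{\max}\gtrsim N_{\max}^{1+\alpha}$; combining this with part (b) of Corollary \ref{cor42} (the bound $2^{(j_{\min}+j_{\text{med}})/2}2^{-(1+\alpha)k_{\max}/2}$, or the variant with the extra $2^{-k_{\min}/2}$ when $(k_i,j_i)=(k_{\min},j_{\max})$) yields enough powers of $2^{-(1+\alpha)k_{\max}/2}$ to beat the $2^{k_1/2}$ weight and to produce a small negative power of $N_{\max}$ (and of $L_{\min}$, $L_{\text{med}}$ after using a fraction of the $\jb{\tau-\omega}^{1/2\mp\varepsilon}$ weights) for the summation. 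One must check the sub-case where the frequency on the $u$-factor (weight $N^{1/2}$) is the high one versus the low one: when it is high, $2^{k_1/2}\cdot 2^{-(1+\alpha)k_{\max}/2}=2^{-\alpha k_{\max}/2}\ll1$; when it is low, $2^{k_1/2}$ is harmless and the $v$-weight $2^{-k_2/2+\alpha k_2/4}$ together with $2^{-(1+\alpha)k_{\max}/2}$ is very strongly summable. In the \emph{high-high} regime $N_1\sim N_2\sim N_3$ (or $N_3\ll N_1\sim N_2$), the weights combine to $2^{k_1/2}2^{-k_2/2+\alpha k_2/4}\sim 2^{\alpha k/4}$, which is a \emph{positive} power, so here one genuinely needs the dispersive gain: use part (c) of Corollary \ref{cor42}, namely $2^{j_{\min}/2}2^{j_{\text{med}}/4}2^{-\alpha k_{\max}/4}$, whose factor $2^{-\alpha k_{\max}/4}$ exactly cancels $2^{\alpha k/4}$, after which a small fraction of the $\varepsilon$-room in the modulation weights gives a convergent geometric sum; when $N_3\ll N_1\sim N_2$ one uses part (b) instead, which is even better.

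The hardest point — and the reason the exponent $1/2-\alpha/4$ appears — is the high-high-to-low interaction when $N_3\lesssim 1$ is small, i.e. the output frequency is comparable to $1$ while $N_1\sim N_2$ are large: there the weight $\jb{\xi_3}^{1/2-\alpha/4}$ on the output gives nothing, so the whole gain must come from $2^{k_1/2}2^{-k_2/2+\alpha k_2/4}=2^{\alpha k_1/4}$ versus the multiplier bound. Here part (c) supplies $2^{-\alpha k_{\max}/4}=2^{-\alpha k_1/4}$, which precisely balances, and the residual $2^{j_{\text{med}}/4}$ is absorbed into $2^{(1/2-\varepsilon)j}$-type weights at the cost of the $\varepsilon$; one must be slightly careful that the bilinear Strichartz estimate \eqref{eq:bilinearstri} (embedded in the proof of part (c)) is applied in the right variables, and that the $k$-summation at the low end is finite because only finitely many $k_3$ with $N_3\lesssim N_1$ occur and the $N_1$-sum converges geometrically. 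Once all cases give a product of geometrically summable dyadic weights, Schur's test (or Cauchy--Schwarz in $k_3,j_3$) closes the estimate.
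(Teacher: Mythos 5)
Your overall strategy---duality plus Plancherel, dyadic decomposition in frequency and modulation, then feeding the blocks into Corollary \ref{cor42} and summing---is exactly the route the paper takes (the paper merely dresses it in the $[k;Z]$-multiplier language of \cite{Taokz}, with Corollary \ref{cor42} restated as Lemma \ref{pchar}). However, there is a genuine gap precisely in the regime you single out as the hardest one. First, a small but consequential imprecision: by Lemma \ref{resoest} the resonance is $|\Omega|\sim N_{max}^{1+\alpha}N_{min}$, not $N_{max}^{1+\alpha}$, so when $N_{min}$ is small you get no useful lower bound on $L_{max}$ for free. Second, and more seriously, in the high-high-to-low case $N_3\ll N_1\sim N_2$ you invoke part (c) of Corollary \ref{cor42} to produce the factor $2^{-\alpha k_{max}/4}$; but part (c) requires $N_{min}\sim N_{med}\sim N_{max}\gg 1$ and simply does not apply there. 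The applicable tool is part (b), and in the dangerous sub-case where the lowest frequency carries the largest modulation (i.e. $(k_i,j_i)=(k_{min},j_{max})$) its bound carries the extra factor $2^{-k_{min}/2}=N_{min}^{-1/2}$, which is a \emph{loss}, not a gain, for small $N_{min}$. Your assertion that ``only finitely many $k_3$ with $N_3\lesssim N_1$ occur'' is false---there are infinitely many dyadic blocks below any fixed scale---so the naive summation of $N_{min}^{-1/2}$ over small $N_{min}$ diverges and the argument as written does not close.

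The paper resolves exactly this point through the refined bound \eqref{eq:charii}, namely $L_{min}^{1/2}N_{max}^{-(1+\alpha)/2}\min\bigl(N_{max}^{1+\alpha}N_{min},\tfrac{N_{max}}{N_{min}}L_{med}\bigr)^{1/2}$: taking the geometric mean of the two entries of the $\min$ kills the $N_{min}^{-1/2}$ loss and leaves $N_{max}^{-\alpha/4}L_{med}^{1/4}$, after which the $N_{min}$-sum converges because the summand decays geometrically away from the crossover scale; in the companion sub-case where a low-frequency \emph{input} carries $L_{max}\sim H\sim N_{max}^{1+\alpha}N_{min}$, one additionally uses the constraint $L\gtrsim 1$, which forces $N_{min}\gtrsim N_{max}^{-(1+\alpha)}$ and truncates the sum, together with the modulation weight $L_{max}^{-1/2+\varepsilon}\sim(N_{max}^{1+\alpha}N_{min})^{-1/2+\varepsilon}$ on that factor. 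To repair your proof you need to replace the appeal to part (c) in the $N_{min}\ll N_{med}\sim N_{max}$ regime by this interpolation between the two alternatives in part (b) (equivalently, prove and use Lemma \ref{pchar}(ii)) and carry out the $N_{min}$ and $L$ summations with these constraints in place; the remaining cases of your outline (all frequencies comparable via part (c), and high-low with the high frequency on the $u$-factor) are handled correctly.
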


Before proving this proposition, we restate Corollary \ref{cor42} in
the following lemma.

\begin{lemma}\label{pchar}
Let $H,N_1,N_2,N_3,L_1,L_2,L_3>0$ obey \eqref{eq:dyadic1} and
\eqref{eq:dyadic2}. Then

(i) If $N_{max}\sim N_{min}$ and $L_{max}\sim
N_{max}^{1+\alpha}N_{min}$, then we have
\begin{equation}\label{eq:chari}
\eqref{eq:char} \les L_{min}^{1/2}N_{max}^{-\alpha/4}L_{med}^{1/4}.
\end{equation}

(ii) If $N_2\sim N_3 \gg N_1$ and $N_{max}^2N_{min}\sim L_1\ges
L_2,L_3$, then
\begin{equation}\label{eq:charii}
\eqref{eq:char} \les
L_{min}^{1/2}N_{max}^{-(1+\alpha)/2}\min(N_{max}^{1+\alpha}N_{min},\frac{N_{max}}{N_{min}}L_{med})^{1/2}.
\end{equation}
Similarly for permutations.

(iii) In all other cases, we have
\begin{equation}\label{eq:chariii}
\eqref{eq:char} \les
L_{min}^{1/2}N_{max}^{-(1+\alpha)/2}\min(N_{max}^{1+\alpha}N_{min},L_{med})^{1/2}.
\end{equation}
\end{lemma}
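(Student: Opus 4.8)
The plan is to read Lemma \ref{pchar} as nothing more than a dyadic, modulation-resolved transcription of Corollary \ref{cor42}, and to prove it by matching each of the three alphabetical cases to the corresponding part of that corollary. First I would recall the standard reduction, already implicit in the proof of Corollary \ref{cor42}: by duality and Plancherel the quantity in \eqref{eq:char} --- the $[3;\R\times\R]$-multiplier norm of the indicator of the region where $|\xi_i|\sim N_i$ and $\jb{\tau_i-\omega(\xi_i)}\sim L_i$ on $\Gamma_3(\R\times\R)$ --- is controlled by the convolution bound $\norm{1_{D_{k_3,j_3}}(f_{k_1,j_1}*f_{k_2,j_2})}_{L^2}$ estimated in Corollary \ref{cor42}, under the dictionary $N_i=2^{k_i}$, $L_i=2^{j_i}$, and with the resonance identity $H=|\Omega(\xi_1,\xi_2)|$ valid on the hyperplane $\xi_1+\xi_2+\xi_3=0$ (here I use that $\omega$ is odd, exactly as in the first line of the proof of Lemma \ref{lemsymes}). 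The two hypotheses \eqref{eq:dyadic1} and \eqref{eq:dyadic2} supply the constraints $N_{max}\sim N_{med}$ and $L_{max}\sim\max(H,L_{med})$; the former guarantees the convolution vanishes unless $N_{max}\sim N_{med}$ (cf. \eqref{eq:freeq}), and the latter is what lets me decide which part of Corollary \ref{cor42} is sharp in each regime.

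With this dictionary case (i) is immediate: $N_{max}\sim N_{min}$ forces all three frequencies comparable (and $\gg 1$), so Corollary \ref{cor42}(c) applies verbatim and gives $L_{min}^{1/2}L_{med}^{1/4}N_{max}^{-\alpha/4}$, which is exactly \eqref{eq:chari}. For the remaining cases one is in the high-low regime $N_{min}\ll N_{med}\sim N_{max}$, and the whole content is to produce the $\min(\cdot,\cdot)$ as the pointwise minimum of two competing bounds. In case (iii) the low-frequency mode does not carry the largest modulation, so Corollary \ref{cor42}(b) is available in its non-special form, giving $L_{min}^{1/2}L_{med}^{1/2}N_{max}^{-(1+\alpha)/2}$; combining this with the universal part-(a) bound, which reads $N_{min}^{1/2}L_{min}^{1/2}=L_{min}^{1/2}N_{max}^{-(1+\alpha)/2}(N_{max}^{1+\alpha}N_{min})^{1/2}$, and taking the smaller of the two reproduces $\min(N_{max}^{1+\alpha}N_{min},L_{med})$ and hence \eqref{eq:chariii}.

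Case (ii) is the genuinely special regime. The hypotheses $N_2\sim N_3\gg N_1$ and $N_{max}^2N_{min}\sim L_1\ges L_2,L_3$ place the minimal frequency together with the maximal modulation, i.e. $(k_1,j_1)=(k_{min},j_{max})$, precisely the configuration in which the non-special bound of Corollary \ref{cor42}(b) is forbidden. Here I would instead pair the universal part-(a) bound $N_{min}^{1/2}L_{min}^{1/2}$ with the special part-(b) bound $L_{min}^{1/2}L_{med}^{1/2}N_{max}^{-\alpha/2}N_{min}^{-1/2}=L_{min}^{1/2}N_{max}^{-(1+\alpha)/2}(\tfrac{N_{max}}{N_{min}}L_{med})^{1/2}$, and take the minimum to obtain \eqref{eq:charii}; the remaining orderings of the minimal-frequency/maximal-modulation mode are handled by the permutation symmetry recorded after Corollary \ref{cor42} (``Similarly for permutations'').

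I expect the only real obstacle to be bookkeeping rather than analysis: correctly deciding, from \eqref{eq:dyadic1}--\eqref{eq:dyadic2}, when the configuration $(k_{min},j_{max})$ occurs --- so that case (ii) is matched to the special sub-case of Corollary \ref{cor42}(b) and case (iii) to the non-special sub-case --- and then verifying that the algebraic minimum of the two exponent vectors is exactly the $\min(\cdot,\cdot)$ written in \eqref{eq:charii} and \eqref{eq:chariii}. Once the substitutions $2^{j_{min}/2}\mapsto L_{min}^{1/2}$, $2^{-\alpha k_{max}/2}\mapsto N_{max}^{-\alpha/2}$, and so on are fixed, each estimate is a one-line computation, so no new integration or oscillatory-integral input is needed beyond Lemma \ref{lemsymes}.
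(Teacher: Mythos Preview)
Your proposal is correct and is exactly the approach the paper intends: the paper offers no separate proof of Lemma \ref{pchar} but introduces it as a restatement of Corollary \ref{cor42}, and your dictionary $N_i=2^{k_i}$, $L_i=2^{j_i}$ together with the case-matching (part (c) for (i), the two sub-cases of part (b) combined with part (a) for (ii) and (iii)) carries this out precisely. One minor remark on your phrasing for (iii): it is not literally true that in every ``other case'' the low-frequency mode avoids the largest modulation, but whenever it does carry $L_{max}$ one necessarily has $L_{med}\sim L_{max}\gtrsim H\sim N_{max}^{1+\alpha}N_{min}$, so the $\min$ in \eqref{eq:chariii} is realized by its first argument and part (a) alone suffices---your argument is therefore complete.
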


\begin{proof}[Proof of Proposition \ref{kpv-2}]

By Plancherel's equality it suffices to show that
\begin{equation}\label{kpv-2-est}
\normo{\frac{\langle \xi_2 \rangle^{1/2} } { \langle \xi_1
\rangle^{\frac{1}{2}-\frac{\alpha}{4}} \jb{\tau_2 -
\omega(\xi_2)}^{1/2 - \varepsilon} \jb{\tau_1 -
\omega(\xi_1)}^{1/2+\varepsilon} } }_{[3;\R \times \R]} \lesssim 1.
\end{equation}
By comparision principle (see \cite{Taokz}), it suffices to prove
that
\begin{eqnarray}
&&\sum_{N_1,N_2,N_3}\sum_{L_1,L_2,L_3}\sum_{H} \frac{ \langle N_2
\rangle^{1/2} }{ \langle N_1\rangle^{\frac{1}{2}-\frac{\alpha}{4}}
L_1^{1/2+\varepsilon} L_2^{1/2-\varepsilon} }\nonumber\\
&&\qquad
\qquad\norm{\chi_{N_1,N_2,N_3;H;L_1,L_2,L_3}}_{[3;\R^2]}\les 1,
\end{eqnarray}
where $N_i,L_i, H$ are dyadic numbers,
$h(\xi_1,\xi_2,\xi_3)=\omega(\xi_1)+\omega(\xi_2)+\omega(\xi_3)$ and
\begin{eqnarray}
&&\chi_{N_1,N_2,N_3;H;L_1,L_2,L_3}=\chi_{|\xi_1|\sim N_1,|\xi_2|\sim
N_2,|\xi_3|\sim N_3}\nonumber\\
&&\qquad \cdot \chi_{|h|\sim H}\chi_{|\tau_1-\omega(\xi_1)|\sim
L_1,|\tau_2-\omega(\xi_2)|\sim L_2,|\tau_3-\omega(\xi_3)|\sim L_3}.
\end{eqnarray}
The issues reduce to the estimates of
\begin{equation}\label{eq:char}
\norm{\chi_{N_1,N_2,N_3;H;L_1,L_2,L_3}}_{[3;\R^2]}
\end{equation}
and dyadic summations.

From the identity
\[\xi_1+\xi_2+\xi_3=0\]
and
\[\tau_1-\omega(\xi_1)+\tau_2-\omega(\xi_2)+\tau_3-\omega(\xi_3)+h(\xi)=0,\]
then we must have for the multiplier in \eqref{eq:char} to be
nonvanishing
\begin{eqnarray}\label{eq:dyadic1}
N_{max}&\sim& N_{med},\nonumber\\
L_{max}&\sim& \max(L_{med}, H),
\end{eqnarray}
where we define $N_{max}\geq N_{med}\geq N_{min}$ to be the maximum,
median, and minimum of $N_1,\ N_2,\ N_3$ respectively. Similarly
define $L_{max}\geq L_{med}\geq L_{min}$. It's known (see Section 4,
\cite{Taokz}) and from Lemma \ref{resoest} that we may assume
\begin{equation}\label{eq:dyadic2}
N_{max}\ges 1, \quad L_1,L_2,L_3\ges 1, \quad H\sim
N_{max}^{1+\alpha}N_{min}.
\end{equation}
Therefore, from Schur's test (Lemma 3.11, \cite{Taokz}) it suffices
to prove that
\begin{eqnarray}\label{eq:bicase1}
&&\sum_{N_{max}\sim N_{med}\sim N}\sum_{L_1,L_2,L_3\geq 1}\frac{
\langle N_2 \rangle^{1/2} }{ \langle
N_1\rangle^{\frac{1}{2}-\frac{\alpha}{4}}
L_1^{1/2+\varepsilon} L_2^{1/2-\varepsilon} }\nonumber\\
&&\qquad \qquad \times
\norm{\chi_{N_1,N_2,N_3;L_{max};L_1,L_2,L_3}}_{[3;\R^2]}
\end{eqnarray}
and
\begin{eqnarray}\label{eq:bicase2}
&&\sum_{N_{max}\sim N_{med}\sim N}\sum_{L_{max}\sim L_{med}}\sum_{H
\leq L_{max}}\frac{ \langle N_2 \rangle^{1/2} }{ \langle
N_1\rangle^{\frac{1}{2}-\frac{\alpha}{4}}
L_1^{1/2+\varepsilon} L_2^{1/2-\varepsilon} }\nonumber\\
&&\qquad \qquad \times
\norm{\chi_{N_1,N_2,N_3;H;L_1,L_2,L_3}}_{[3;\R^2]}
\end{eqnarray}
are both uniformly bounded for all $N\ges 1$.

Fix $N$.  We first prove \eqref{eq:bicase2}. By \eqref{eq:chariii}
we reduce to
\begin{align*} \sum_{ N_{max} \sim N_{med} \sim N}
&\sum_{ L_{max} \sim L_{med} \gtrsim N_{max}^{1+\alpha}N_{min}}\\
&\frac{ \langle N_2 \rangle^{1/2} }{ \langle
N_1\rangle^{\frac{1}{2}-\frac{\alpha}{4}} L_1^{1/2+\varepsilon}
L_2^{1/2-\varepsilon} } L_{min}^{1/2}
N_{max}^{-\frac{1+\alpha}{2}}L_{med}^{1/2}  \lesssim 1.
\end{align*}
Using the estimate
$$ \frac{\langle N_2 \rangle^{1/2} } {\langle N_1 \rangle^{\frac{1}{2}-\frac{\alpha}{4}} }
\lesssim \frac{N^{1/2}}{\langle N_{min}
\rangle^{\frac{1}{2}-\frac{\alpha}{4}}}; \quad L_1^{1/2+\varepsilon}
L_2^{1/2-\varepsilon} \gtrsim L_{min}^{1/2+\varepsilon}
L_{med}^{1/2-\varepsilon} $$ and then performing the $L$ summations,
we reduce to
$$\sum_{ N_{max} \sim N_{med} \sim N}
\frac{ N^{-\frac{\alpha}{2}}(N_{max}^{1+\alpha}N_{min})^\varepsilon
}{ \langle N_{min}\rangle^{\frac{1}{2}-\frac{\alpha}{4}} } \lesssim
1.$$ which is certainly true since $0<\alpha \leq 1$.

Now we show \eqref{eq:bicase1}.  We may assume $L_{max} \sim
N_{max}^{1+\alpha} N_{min}$.

We assume first $N_{max}\sim N_{min}\sim N$. In this case applying
\eqref{eq:chari} we reduce to
$$
\sum_{ L_{max} \sim N^3} \frac{N^{1/2} }{
N^{\frac{1}{2}-\frac{\alpha}{4}} L_{min}^{1/2+\varepsilon}
L_{med}^{1/2-\varepsilon} } L_{min}^{1/2}
N_{max}^{-\frac{\alpha}{4}} L_{med}^{1/4}  \lesssim 1
$$
which is easily verified. Now we assume $N_{max}\sim N_{med}\gg
N_{min}$ where \eqref{eq:charii} applies. We have three cases
\begin{align*}
 N \sim N_1 \sim N_2 \gg N_3&; H \sim L_3 \gtrsim L_1, L_2\\
 N \sim N_2 \sim N_3 \gg N_1&; H \sim L_1 \gtrsim L_2, L_3\\
 N \sim N_1 \sim N_3 \gg N_2&; H \sim L_2 \gtrsim L_1, L_3
\end{align*}

In the first case we reduce to
\begin{eqnarray*}
&&\sum_{N_3 \ll N } \sum_{1 \lesssim L_1,L_2 \lesssim N^{1+\alpha}
N_3} \frac{ N^{1/2} }{ N^{\frac{1}{2}-\frac{\alpha}{4}}
L_1^{1/2+\varepsilon} L_2^{1/2-\varepsilon} }\\
&&\quad L_{min}^{1/2}N_{max}^{-(1+\alpha)/2}
\min(N_{max}^{1+\alpha}N_{min},\frac{N_{max}}{N_{min}}L_{med})^{1/2}.
\end{eqnarray*}
Performing the $N_3$ summation we reduce to
$$
\sum_{1 \lesssim L_1,L_2 \lesssim N^3} \frac{ N^{1/2} }{
N^{\frac{1}{2}-\frac{\alpha}{4}} L_1^{1/2+\varepsilon}
L_2^{1/2-\varepsilon} } L_{min}^{1/2} N^{-\frac{\alpha}{4}}
L_{med}^{1/4} \lesssim 1$$ which is easily verified.

To unify the second and third cases we replace
$L_1^{1/2+\varepsilon}$ by $L_1^{1/2-\varepsilon}$.  It suffices now
to show the second case. We simplify using \eqref{eq:charii} to
$$
\sum_{N_1 \ll N } \sum_{1 \lesssim L_2,L_3 \ll N^2 N_1}
\frac{N^{1/2}}{ \langle N_1\rangle^{\frac{1}{2}-\frac{\alpha}{4}}
(N^{1+\alpha} N_1)^{1/2-\varepsilon} L_2^{1/2-\varepsilon} }
L_{min}^{1/2} N_1^{1/2} \lesssim 1.
$$
We may assume $N_1 \gtrsim N^{-(1+\alpha)}$ since the inner sum
vanishes otherwise.  Performing the $L$ summation we reduce to
$$
\sum_{N^{-(1+\alpha)} \lesssim N_1 \ll N } \frac{N^{1/2}}{ \langle
N_1\rangle^{\frac{1}{2}-\frac{\alpha}{4}} (N^{1+\alpha}
N_1)^{1/2-2\varepsilon} } N_1^{1/2} \lesssim 1
$$
which is easily verified (with about $N^{-\frac{\alpha}{2}}$ to
spare).

To finish the proof of \eqref{eq:bicase1} it remains to deal with
the cases where \eqref{eq:chariii} holds.  This reduces to
$$
\sum_{ N_{max} \sim N_{med} \sim N} \sum_{ L_{max} \sim
N_{max}^{1+\alpha} N_{min}} \frac{ \langle N_2 \rangle^{1/2} }{
\langle N_1\rangle^{\frac{1}{2}-\frac{\alpha}{4}}
L_1^{1/2+\varepsilon} L_2^{1/2-\varepsilon} } L_{min}^{1/2}
N_{max}^{-\frac{1+\alpha}{2}}L_{med}^{1/2}  \lesssim 1.
$$
Performing the $L$ summations, we reduce to
$$
\sum_{ N_{max} \sim N_{med} \sim N} \frac{ \langle N_2 \rangle^{1/2}
(N_{max}^{1+\alpha} N_{min})^{\varepsilon}}{ \langle
N_1\rangle^{\frac{1}{2}-\frac{\alpha}{4}} }
N_{max}^{-\frac{1+\alpha}{2}} \lesssim 1$$ which is easily verified.
\end{proof}

We see from the proof that $\alpha>0$ plays crucial roles. The
implicit constant in \eqref{eq:trilinear} depends on both $\alpha$
and $\epsilon$.

\section{Short-time bilinear estimates}

We prove now some dyadic bilinear estimates. We will need an
estimate on the resonance. For its proof we refer the reader to
Lemma 3.8 in \cite{Herr2}.
\begin{lemma}\label{resoest}
Let $0\leq \alpha \leq 1$. Then
\[|\Omega(\xi_1,\xi_2)|\sim |\xi|_{max}^{1+\alpha}|\xi|_{min},\]
where
\[|\xi|_{max}=\max(|\xi_1|,|\xi_2|,|\xi_1+\xi_2|),\quad |\xi|_{min}=\min(|\xi_1|,|\xi_2|,|\xi_1+\xi_2|).\]
\end{lemma}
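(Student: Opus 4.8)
The plan is to reduce the estimate to an elementary one--variable inequality. First, since $\omega$ is odd, setting $\xi_3=-(\xi_1+\xi_2)$ gives $-\omega(\xi_1+\xi_2)=\omega(\xi_3)$, hence
\[\Omega(\xi_1,\xi_2)=\omega(\xi_1)+\omega(\xi_2)+\omega(\xi_3),\qquad \xi_1+\xi_2+\xi_3=0.\]
In this form $\Omega$, regarded as a function on the plane $\{\xi_1+\xi_2+\xi_3=0\}$, is symmetric under permutations of $(\xi_1,\xi_2,\xi_3)$ and satisfies $\Omega(-\xi_1,-\xi_2)=-\Omega(\xi_1,\xi_2)$, while $|\xi|_{max}=\max_i|\xi_i|$ and $|\xi|_{min}=\min_i|\xi_i|$ are invariant under both operations. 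Since $\xi_1+\xi_2+\xi_3=0$, not all of the $\xi_i$ can have the same sign, so after a permutation and, if necessary, a global sign change I may assume $0\le \xi_2\le \xi_1$ and $\xi_3=-(\xi_1+\xi_2)$. Plugging in $\omega(\xi)=-\xi|\xi|^{1+\alpha}$ then gives
\[|\Omega(\xi_1,\xi_2)|=(\xi_1+\xi_2)^{2+\alpha}-\xi_1^{2+\alpha}-\xi_2^{2+\alpha},\]
which is $\ge 0$ by superadditivity of $t\mapsto t^{2+\alpha}$ on $[0,\infty)$; in this normalization one also has $|\xi|_{max}=\xi_1+\xi_2\sim \xi_1$ and $|\xi|_{min}=\xi_2$.

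Next I would use homogeneity: factoring out $\xi_1^{2+\alpha}$ and setting $t=\xi_2/\xi_1\in[0,1]$, it remains to prove that $g(t)\sim t$ on $[0,1]$ with constants uniform in $\alpha\in[0,1]$, where
\[g(t):=(1+t)^{2+\alpha}-1-t^{2+\alpha}.\]
Granting this, $|\Omega(\xi_1,\xi_2)|=\xi_1^{2+\alpha}\,g(\xi_2/\xi_1)\sim \xi_1^{2+\alpha}\cdot(\xi_2/\xi_1)=\xi_1^{1+\alpha}\xi_2\sim|\xi|_{max}^{1+\alpha}|\xi|_{min}$, which is the lemma. To prove $g(t)\sim t$, note $g(0)=0$ and
\[g'(t)=(2+\alpha)\big[(1+t)^{1+\alpha}-t^{1+\alpha}\big].\]
A short computation ($\frac{d}{dt}[(1+t)^{1+\alpha}-t^{1+\alpha}]=(1+\alpha)[(1+t)^{\alpha}-t^{\alpha}]\ge 0$) shows $(1+t)^{1+\alpha}-t^{1+\alpha}$ is nondecreasing in $t$, so on $[0,1]$ it lies in $[1,2^{1+\alpha}-1]$; hence $2+\alpha\le g'(t)\le (2+\alpha)(2^{1+\alpha}-1)$, i.e.\ $g'(t)\sim 1$ uniformly in $\alpha\in[0,1]$, and therefore $g(t)=\int_0^t g'(s)\,ds\sim t$.

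The computation is elementary; the main point requiring care is the \emph{lower} bound $g(t)\ges t$, which is the statement that the three terms $\omega(\xi_1),\omega(\xi_2),\omega(\xi_3)$ do not nearly cancel. This is exactly the bound $g'\ges 1$, and it ultimately rests on the strict monotonicity (indeed convexity) of $t\mapsto t^{2+\alpha}$; one should also check, as above, that the implied constants stay bounded as $\alpha$ ranges over $[0,1]$. The resulting argument matches Lemma~3.8 of \cite{Herr2}.
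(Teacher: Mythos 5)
Your argument is correct and complete: the reduction via oddness and permutation symmetry to the case $0\le\xi_2\le\xi_1$, the resulting identity $|\Omega|=(\xi_1+\xi_2)^{2+\alpha}-\xi_1^{2+\alpha}-\xi_2^{2+\alpha}$, and the uniform two-sided bound $g'(t)\in[2+\alpha,(2+\alpha)(2^{1+\alpha}-1)]$ all check out, including the uniformity in $\alpha\in[0,1]$. The paper itself gives no proof of this lemma, deferring entirely to Lemma~3.8 of Herr's paper, and your self-contained computation is the standard argument one would find there; there is nothing to object to.
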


\begin{proposition}[high-low]\label{phl} If $k_3\geq 20$, $|k_2-k_3|\leq 5$, $0\leq k_1\leq k_2-10$, then
\begin{eqnarray}\label{eq:p711}
\norm{P_{k_3}\partial_x(u_{k_1}v_{k_2})}_{N_{k_3}}\les
\norm{u_{k_1}}_{F_{k_1}}\norm{v_{k_2}}_{F_{k_2}}.
\end{eqnarray}
\end{proposition}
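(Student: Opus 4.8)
The plan is to peel off the $N_{k_3}$-norm on a single short time interval, reduce the claim to a purely dyadic convolution bound in the $X_k$-spaces, and then run a case analysis on the relative sizes of the modulations $\tau_i-\omega(\xi_i)$; the only delicate case is the one in which the low-frequency factor carries the dominant modulation.

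First I would fix $t_{k_3}\in\R$, set $m=[(1-\alpha)k_3]$ and $\gamma(t)=\eta_0(2^m(t-t_{k_3}))$, and choose a slightly wider bump $\widetilde{\gamma}$ with $\widetilde{\gamma}\equiv1$ on $\supp\gamma$. Since $\gamma$ depends on $t$ only it commutes with $\partial_x$ and with $P_{k_3}$, so $\gamma\cdot P_{k_3}\partial_x(u_{k_1}v_{k_2})=P_{k_3}\partial_x\big((\gamma u_{k_1})(\widetilde{\gamma} v_{k_2})\big)$. Because $m\ge[(1-\alpha)k_1]$ and $|m-[(1-\alpha)k_2]|\lesssim1$ (recall $|k_2-k_3|\le5$), the multiplier estimate \eqref{eq:pXk3} together with \eqref{eq:Sk} and a harmless partition of $\supp\widetilde{\gamma}$ into $O(1)$ intervals of length $2^{-[(1-\alpha)k_2]}$ give $\|\ft[\gamma u_{k_1}]\|_{X_{k_1}}\lesssim\|u_{k_1}\|_{F_{k_1}}$ and $\|\ft[\widetilde{\gamma} v_{k_2}]\|_{X_{k_2}}\lesssim\|v_{k_2}\|_{F_{k_2}}$, uniformly in $t_{k_3}$. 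Writing $f_1=\ft[\gamma u_{k_1}]$ and $f_2=\ft[\widetilde{\gamma} v_{k_2}]$, it therefore suffices to prove the $t_{k_3}$-independent dyadic estimate
\[
\big\|(\tau-\omega(\xi)+i2^m)^{-1}\,1_{I_{k_3}}(\xi)\,\xi\,(f_1*f_2)\big\|_{X_{k_3}}\lesssim\|f_1\|_{X_{k_1}}\|f_2\|_{X_{k_2}}
\]
for all $f_i\in X_{k_i}$ supported at spatial frequency $\sim2^{k_i}$.

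Next, decompose $f_i=\sum_{j_i\ge0}\eta_{j_i}(\tau-\omega(\xi))f_i=:\sum_{j_i}f_{k_i,j_i}$ and decompose the output in its own modulation $\sim2^{j_3}$. On the relevant supports $|\xi|\sim2^{k_3}$, so $\partial_x$ costs a factor $2^{k_3}$; by Lemma \ref{resoest} the resonance obeys $|\Omega(\xi_1,\xi_2)|\sim H:=2^{(1+\alpha)k_3}2^{k_1}$, so from $(\tau_3-\omega(\xi_3))=(\tau_1-\omega(\xi_1))+(\tau_2-\omega(\xi_2))+\Omega(\xi_1,\xi_2)$ one has $\max(2^{j_1},2^{j_2},2^{j_3})\gtrsim H$; and on the $\eta_{j_3}$-slice the weight $(\tau-\omega(\xi)+i2^m)^{-1}$ has size $\lesssim2^{-\max(j_3,m)}$. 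Splitting according to which of $j_1,j_2,j_3$ is largest, applying Corollary \ref{cor42}, and summing the $j_i$-series against $\|f_1\|_{X_{k_1}}\|f_2\|_{X_{k_2}}=\big(\sum_{j_1}2^{j_1/2}\|f_{k_1,j_1}\|_{L^2}\big)\big(\sum_{j_2}2^{j_2/2}\|f_{k_2,j_2}\|_{L^2}\big)$, the cases $j_3=j_{max}$ and $j_2=j_{max}$ are routine: since $k_2,k_3\neq k_{min}$, Corollary \ref{cor42}(b) applies with the strong gain $2^{-(1+\alpha)k_{max}/2}$, and combined with $2^{-\max(j_3,m)}\lesssim H^{-1}$ (when $j_3$ dominates), respectively with $\|f_{k_2,j_2}\|_{L^2}\le2^{-j_2/2}\|f_2\|_{X_{k_2}}$ and the $j_3$-summation of $2^{-\max(j_3,m)}$ (when $j_2$ dominates), the $2^{k_3}$-loss is beaten with a surplus of $2^{-\alpha k_3}2^{-k_1/2}$; the only mild point is a logarithmic factor $\lesssim\alpha k_3+k_1$ in one modulation sum, absorbed by that surplus.

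The main case — and the main obstacle — is $j_1=j_{max}$: the low-frequency factor $u_{k_1}$ carries the dominant modulation, which by the resonance forces $2^{j_1}\gtrsim H$. Here Corollary \ref{cor42}(b) only provides the weaker gain $2^{-\alpha k_{max}/2}2^{-k_{min}/2}$, which by itself does not beat the $2^{k_3}$-loss. The remedy is to use instead the elementary bound Corollary \ref{cor42}(a), $\|1_{D_{k_3,j_3}}(f_{k_1,j_1}*f_{k_2,j_2})\|_{L^2}\lesssim2^{k_1/2}2^{j_{min}/2}\|f_{k_1,j_1}\|_{L^2}\|f_{k_2,j_2}\|_{L^2}$, together with the smallness forced by the high modulation of $u_{k_1}$,
\[
\sum_{2^{j_1}\gtrsim H}\|f_{k_1,j_1}\|_{L^2}\lesssim H^{-1/2}\|f_1\|_{X_{k_1}} .
\]
The product of the $2^{k_3}$ from $\partial_x$, the $2^{k_1/2}$ from Corollary \ref{cor42}(a), and $H^{-1/2}=2^{-(1+\alpha)k_3/2}2^{-k_1/2}$ is exactly $2^{(1-\alpha)k_3/2}$ — the $k_1$-powers cancel, so there is no loss in the low frequency — and this is matched by the gain from the $X_{k_3}$-weight: carrying out the $j_3$- and then $j_2$-summations of the factor $2^{j_3/2}2^{-\max(j_3,m)}$ against the $\ell^1$-structure of $X_{k_2}$ produces $2^{-m/2}\lesssim2^{-(1-\alpha)k_3/2}$, and the two cancel up to an absolute constant. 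This balancing of the derivative loss against the short-time parameter $2^{-[(1-\alpha)k_3]}$ — which is precisely why that time scale is built into $F_k$ and $N_k$ — is the heart of the argument; the practical care required is to carry out the modulation summations keeping the weights $2^{-\max(j_i,m)}$ rather than estimating them crudely, and to absorb the residual logarithmic losses against the $\ell^1$-summability of the $X_k$-norms.
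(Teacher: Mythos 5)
Your proposal is correct in its overall structure: the reduction (time localization at scale $2^{-[(1-\alpha)k_3]}$, passage to a dyadic convolution estimate in the $X_k$ spaces via \eqref{eq:pXk3}, and the use of Corollary \ref{cor42}) is exactly the paper's. Where you genuinely diverge is in the modulation analysis. The paper never invokes the resonance function here: after localization it takes \emph{all three} modulations $\geq 2^{[(1-\alpha)k_3]}$ (lumping the low input modulations into single pieces supported in $D_{k_i,\le [(1-\alpha)k_3]}$, which is what Remark \ref{symest} is for), applies the strong inequality of Corollary \ref{cor42}(b) across the board, and closes with the single sum $\sum_{j_3\ge [(1-\alpha)k_3]}2^{-j_3/2}\les 2^{-(1-\alpha)k_3/2}$, so that $2^{k_3}\cdot 2^{-(1-\alpha)k_3/2}\cdot 2^{-(1+\alpha)k_3/2}=1$. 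Your route --- splitting by the dominant modulation and invoking Lemma \ref{resoest} --- is heavier, but it buys something real: the strong inequality of Corollary \ref{cor42}(b) is \emph{not} available when $(k_i,j_i)=(k_{min},j_{max})$, which is precisely your case $j_1=j_{max}$ (here $k_1=k_{min}$), and the paper's one-line display applies it there without comment. Your substitute in that case --- Corollary \ref{cor42}(a) with its factor $2^{k_{min}/2}$, the forced largeness $2^{j_1}\ges H$, and the exact cancellation $2^{k_3}\cdot 2^{k_1/2}\cdot H^{-1/2}\cdot 2^{-m/2}=1$ --- is correct for $k_1\ge 1$ and is the honest way to close that case.

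The one genuine gap is the block $k_1=0$. There $\widehat{u}_{k_1}$ is supported in $\{|\xi_1|\le 2\}$ rather than $\{|\xi_1|\sim 2^{k_1}\}$, so Lemma \ref{resoest} only gives $|\Omega|\sim 2^{(1+\alpha)k_3}|\xi_1|$, which can be arbitrarily small; your lower bound $2^{j_1}\ges H=2^{(1+\alpha)k_3+k_1}$, and hence the $H^{-1/2}$ gain, are not available, and without them your $j_1=j_{max}$ case loses a factor $2^{(1+\alpha)k_3/2}$. The fix is your cancellation pushed one level deeper: when $j_1$ strictly dominates, the identity $\tau_1-\omega(\xi_1)=(\tau_3-\omega(\xi_3))-(\tau_2-\omega(\xi_2))-\Omega(\xi_1,\xi_2)$ forces $|\Omega|\sim 2^{j_1}$ and hence localizes $|\xi_1|\sim 2^{j_1-(1+\alpha)k_3}$; feeding this support size into the Cauchy--Schwarz step behind Corollary \ref{cor42}(a) replaces $2^{k_1/2}$ by $2^{(j_1-(1+\alpha)k_3)/2}$, and the per-block bound $2^{(j_1+j_2)/2}\norm{f_{k_1,j_1}}_{L^2}\norm{f_{k_2,j_2}}_{L^2}$ follows and sums against the $\ell^1$ structure of the $X_k$ norms. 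Note that your cases $j_2=j_{max}$ and $j_3=j_{max}$ do not actually need the resonance at all: the weight sum $\sum_{j_3}2^{j_3/2}2^{-\max(j_3,m)}\les 2^{-m/2}$ already supplies the missing $2^{-(1-\alpha)k_3/2}$, so those cases are unaffected by small $|\xi_1|$. With that patch the argument is complete.
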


\begin{proof}
Using the definitions and \eqref{eq:pXk3}, we obtain that the
left-hand side of \eqref{eq:p711} is dominated by
\begin{eqnarray}\label{eq:p712}
&&C\sup_{t_k\in
\R}\norm{(\tau-\omega(\xi)+i2^{[(1-\alpha)k_3]})^{-1}\cdot
2^{k_3}1_{I_{k_3}}(\xi) \nonumber\\
&&\quad \cdot
\ft[u_{k_1}\eta_0(2^{[(1-\alpha)k_3]-2}(t-t_k))]*\ft[v_{k_2}
\eta_0(2^{[(1-\alpha)k_3]-2}(t-t_k))]}_{X_k}.
\end{eqnarray}
To prove Proposition \ref{phl}, it suffices to prove that if
$j_i\geq [(1-\alpha)k_3]$ and $f_{k_i,j_i}: \R^2\rightarrow \R_+$
are supported in $\widetilde{D}_{k_i,j_i}$ for $i=1,2$, then
\begin{eqnarray}\label{eq:p713}
&&2^{k_3}\sum_{j_3\geq[(1-\alpha)k_3]}2^{-j_3/2}\norm{1_{\widetilde{D}_{k_3,j_3}}\cdot
(f_{k_1,j_1}*f_{k_2,j_2})}_{L^2}\nonumber\\
&&\les 2^{(s_\alpha-1/4) k_1}
2^{(j_1+j_2)/2}\norm{f_{k_1,j_1}}_{L^2}\norm{f_{k_2,j_2}}_{L^2}.
\end{eqnarray}

Indeed, let $f_{k_1}=\ft[u_{k_1}\eta_0(2^{(1-\alpha)k_3-2}(t-t_k))]$
and $f_{k_2}=\ft[v_{k_2}\eta_0(2^{(1-\alpha)k_3-2}(t-t_k))]$. Then
from the definition of $X_k$ we get that \eqref{eq:p712} is
dominated by
\begin{eqnarray}\label{eq:p714}
\sup_{t_k\in
\R}2^{k_3}\sum_{j_3=0}^{\infty}2^{j_3/2}\sum_{j_1,j_2\geq
[(1-\alpha)k_3]}\norm{(2^{j_3}+i2^{(1-\alpha)k_3})^{-1}1_{{D}_{k_3,j_3}}\cdot
f_{k_1,j_1}*f_{k_2,j_2}}_{L^2},
\end{eqnarray}
where we set
$f_{k_i,j_i}=f_{k_i}(\xi,\tau)\eta_{j_i}(\tau-\omega(\xi))$ for
$j_i>[(1-\alpha)k_3]$ and the remaining part
$f_{k_i,[(1-\alpha)k_3]}=f_{k_i}(\xi,\tau)\eta_{\leq
[(1-\alpha)k_3]}(\tau-\omega(\xi))$, $i=1,2$. For the summation on
the terms $j_3<[(1-\alpha)k_3]$ in \eqref{eq:p714}, we get from the
fact $1_{D_{k_3,j_3}}\leq 1_{\wt{D}_{k_3,j_3}}$ that
\begin{eqnarray}
&&\sup_{t_k\in
\R}2^{k_3}\sum_{j_3<[(1-\alpha)k_3]}2^{j_3/2}\sum_{j_1,j_2\geq
[(1-\alpha)k_3]}\norm{(2^{j_3}+i2^{(1-\alpha)k_3})^{-1}1_{{D}_{k_3,j_3}}\cdot
f_{k_1,j_1}*f_{k_2,j_2}}_{L^2}\nonumber\\
&&\les \sup_{t_k\in \R}2^{k_3}\sum_{j_1,j_2\geq
[(1-\alpha)k_3]}2^{-[(1-\alpha)k_3]/2}\norm{1_{\wt{D}_{k_3,[(1-\alpha)k_3]}}\cdot
f_{k_1,j_1}*f_{k_2,j_2}}_{L^2}.
\end{eqnarray}
From the fact that $f_{k_i,j_i}$ is supported in $\wt{D}_{k_i,j_i}$
for $i=1,2$ and using \eqref{eq:p713}, then we get that
\begin{eqnarray*}
&&\sup_{t_k\in \R}2^{k_3}\sum_{j_1,j_2\geq
[(1-\alpha)k_3]}2^{-[(1-\alpha)k_3]/2}\norm{1_{\wt{D}_{k_3,[(1-\alpha)k_3]}}\cdot
f_{k_1,j_1}*f_{k_2,j_2}}_{L^2}\\
 &&\les 2^{(s_\alpha-1/4) k_1}\sup_{t_k\in \R}
\sum_{j_1,j_2\geq
[(1-\alpha)k_3]}2^{j_1/2}\norm{f_{k_1,j_1}}_{L^2}2^{j_2/2}\norm{f_{k_2,j_2}}_{L^2}.
\end{eqnarray*}
Thus from the definition and using \eqref{eq:pXk2} and
\eqref{eq:pXk3} we obtain \eqref{eq:p711}, as desired. To prove
\eqref{eq:p713}, we apply Corollary \ref{cor42} (b) and Remark
\ref{symest} that
\begin{eqnarray*}
&&2^{k_3}\sum_{j_3\geq
[(1-\alpha)k_3]}2^{-j_3/2}\norm{1_{\widetilde{D}_{k_3,j_3}}\cdot
(f_{k_1,j_1}*f_{k_2,j_2})}_{L^2} \nonumber\\
&&\les 2^{k_3}\sum_{j_3\geq
[(1-\alpha)k_3]}2^{(j_1+j_2-j_3)/2}2^{-(1+\alpha)k_3/2}\prod_{i=1}^2\norm{f_{k_i,j_i}}_{L^2}\les
2^{(j_1+j_2)/2}\prod_{i=1}^2\norm{f_{k_i,j_i}}_{L^2}.
\end{eqnarray*}
Therefore, we complete the proof of the proposition.
\end{proof}

We see from the proof that if we only consider the interactions in
short time, then the modulation has a bound below, thus we are able
to control the high low interactions in time interval of length
$2^{-[(1-\alpha)k]}$.

\begin{proposition}\label{phhh}
Assume $k_3\geq 20$. If $|k_3-k_2|\leq 5$ and $|k_1-k_2|\leq 5$ then
we have
\begin{eqnarray}
\norm{P_{k_3}\partial_x(u_{k_1}v_{k_2})}_{N_{k_3}}\les
 \norm{u_{k_1}}_{F_{k_1}}\norm{v_{k_2}}_{F_{k_2}}.
\end{eqnarray}
\end{proposition}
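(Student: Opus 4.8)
The plan is to reduce, exactly as in the proof of Proposition \ref{phl}, the estimate to a dyadic convolution bound in the $X_k$ spaces, and then apply the symmetric estimates of Corollary \ref{cor42}, this time parts (a) and (c) rather than part (b). First I would use the definitions of $N_{k_3}$ and $F_{k_i}$ together with \eqref{eq:pXk3} to dominate $\norm{P_{k_3}\partial_x(u_{k_1}v_{k_2})}_{N_{k_3}}$ by the supremum over $t_k\in\R$ of
\[
2^{k_3}\sum_{j_3\geq 0}2^{-j_3/2}\norm{1_{\wt D_{k_3,j_3}}\cdot(f_{k_1,j_1}*f_{k_2,j_2})}_{L^2},
\]
where $f_{k_i,j_i}$ are nonnegative, supported in $\wt D_{k_i,j_i}$, with $j_i\geq[(1-\alpha)k_3]$ for $i=1,2$, and $k_1,k_2,k_3$ all comparable to some $k\geq 20$. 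As in the previous proof one splits off the piece $j_3<[(1-\alpha)k_3]$ and handles it with the $\wt D_{k_3,[(1-\alpha)k_3]}$ truncation, so the real task is the bound for $j_3\geq[(1-\alpha)k_3]$.

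The key step is to insert Corollary \ref{cor42}(c), which in the balanced regime $N_{\min}\sim N_{\med}\sim N_{\max}\sim 2^k$ gives
\[
\norm{1_{\wt D_{k_3,j_3}}\cdot(f_{k_1,j_1}*f_{k_2,j_2})}_{L^2}\les 2^{j_{\min}/2}2^{j_{\med}/4}2^{-\alpha k/4}\prod_{i=1}^2\norm{f_{k_i,j_i}}_{L^2}.
\]
Summing $2^{-j_3/2}$ against this bound over $j_3\geq[(1-\alpha)k_3]$ and keeping track of the prefactor $2^{k_3}$: when $j_3$ is the maximum modulation, $2^{-j_3/2}2^{j_{\min}/2}2^{j_{\med}/4}\leq 2^{(j_1+j_2)/4}$ and the $j_3$-sum converges, costing at worst a factor $2^{-[(1-\alpha)k_3]/4}$ from the lower endpoint; combined with $2^{k_3}2^{-\alpha k/4}$ one needs $2^{k}\cdot 2^{-\alpha k/4}\cdot 2^{-(1-\alpha)k/4}\cdot 2^{-(j_1+j_2)/4}\les 2^{(j_1+j_2)/2}2^{(s_\alpha-1/4)k_1}$-type bookkeeping, which works out since the resulting power of $2^k$ is controlled by $2^{sk}$ with room to spare for $s>1-\alpha$. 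When instead $j_1$ or $j_2$ is the maximal modulation, one uses Corollary \ref{cor42}(a), $\norm{1_{\wt D_{k_3,j_3}}(f_{k_1,j_1}*f_{k_2,j_2})}_{L^2}\les 2^{k/2}2^{j_{\min}/2}\prod\norm{f_{k_i,j_i}}_{L^2}$, and the $2^{-j_3/2}$ sum over $j_3\geq[(1-\alpha)k_3]$ converges at the bottom endpoint, gaining $2^{-[(1-\alpha)k_3]/2}$; one then checks that $2^{k_3}2^{k/2}2^{-(1-\alpha)k/2}2^{j_{\min}/2}$ is bounded by $2^{(j_1+j_2)/2}$ times the required power of $2^k$, again with room to spare.

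I expect the main obstacle to be the bookkeeping at the lower endpoint $j_3=[(1-\alpha)k_3]$: the $l^1$-in-$j$ structure of $X_k$ forces one to sum $2^{-j_3/2}$ times the symmetric estimate, and in the balanced case the gain $2^{-\alpha k/4}$ from Corollary \ref{cor42}(c) together with the endpoint gain must beat the derivative loss $2^{k_3}$ and still leave the output in $F^s$ for $s>1-\alpha$; one should verify the exponent arithmetic carefully, in particular that $1-\tfrac{\alpha}{4}-\tfrac{1-\alpha}{4}\le s$ is implied by $s>1-\alpha$ when $0\le\alpha\le1$ (indeed $1-\tfrac{\alpha}{4}-\tfrac{1-\alpha}{4}=\tfrac34+\tfrac{\alpha}{2}$, which is $\le s$ whenever $s>1-\alpha$ and $\alpha\ge\tfrac13$, while for small $\alpha$ the better $2^{-(1+\alpha)k/2}$-type gains from the non-balanced sub-cases take over). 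Apart from this, the argument is a direct transcription of the proof of Proposition \ref{phl} with part (c) of the symmetric estimate replacing part (b), and the remaining details — the splitting of low $j_3$, the role of the time localization $\eta_0(2^{[(1-\alpha)k_3]}(t-t_k))$, and the final passage through \eqref{eq:pXk2}–\eqref{eq:pXk3} — are routine.
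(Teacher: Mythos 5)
There is a genuine gap: you never invoke the resonance bound, and without it the modulation sums do not close. In the balanced regime $k_1\sim k_2\sim k_3$ the paper's key step is Lemma \ref{resoest}, which gives $|\Omega(\xi_1,\xi_2)|\sim 2^{(2+\alpha)k_3}$ on the support of the convolution, hence $1_{\widetilde{D}_{k_3,j_3}}\cdot(f_{k_1,j_1}*f_{k_2,j_2})\equiv 0$ unless $j_{max}\geq (2+\alpha)k_3-30$. This very large lower bound on $j_{max}$ is what beats the derivative $2^{k_3}$ and the factor $2^{k_{min}/2}=2^{k_3/2}$ coming from the crude Cauchy--Schwarz bound: if $j_3=j_{max}$ the sum $\sum_{j_3\geq(2+\alpha)k_3-30}2^{-j_3/2}$ gains $2^{-(2+\alpha)k_3/2}$, and if $j_{max}\in\{j_1,j_2\}$ then $2^{j_{max}/2}\gtrsim 2^{(2+\alpha)k_3/2}$ already sits on the right-hand side. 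With that in hand, Corollary \ref{cor42}(a) alone suffices; part (c) is not needed in this proposition at all.

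Your version, which starts the $j_3$-sum only at $[(1-\alpha)k_3]$ and substitutes Corollary \ref{cor42}(c), does not recover this gain. Carrying out your own bookkeeping in the case $j_3=j_{max}$: summing $2^{-j_3/2}$ over $j_3\geq j_{med}$ gives $2^{-j_{med}/2}$, so you are left needing $2^{k_3}2^{-\alpha k_3/4}2^{-j_{med}/4}\lesssim 2^{j_{med}/2}$ with only $j_{med}\geq[(1-\alpha)k_3]$ available; this leaves an uncontrolled factor $2^{(1/4+\alpha/2)k_3}$. You propose to absorb this into the Sobolev weight $2^{sk}$, but the proposition is a fixed-frequency statement with no such weight, and even at the level of Lemma \ref{pbilinear} the balanced case $k_1\sim k_2\sim k_3$ only affords a margin of $2^{(1-\alpha)k}$ (from the $F^{1-\alpha}$ factor), so $2^{(1/4+\alpha/2)k}$ is not absorbable once $\alpha>1/2$. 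The same problem occurs in your second case ($j_{max}\in\{j_1,j_2\}$), where without the constraint $j_{max}\gtrsim(2+\alpha)k_3$ the factor $2^{3k_3/2}$ from $2^{k_3}\cdot 2^{k_{min}/2}$ cannot be compensated. The fix is simply to insert the support observation from Lemma \ref{resoest} before summing, as the paper does.
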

\begin{proof}
As in the proof of Proposition \ref{phl}, it suffices to prove that
if $j_1,j_2\geq [(1-\alpha)k_3]$ and $f_{k_i,j_i}: \R^2\rightarrow
\R_+$ are supported in $\widetilde{D}_{k_i,j_i}$, $i=1,2$, then
\begin{eqnarray}\label{eq:p722}
2^{k_3}\sum_{j_3\geq[(1-\alpha)k_3]}2^{-j_3/2}\norm{1_{\widetilde{D}_{k_3,j_3}}
(f_{k_1,j_1}*f_{k_2,j_2})}_{L^2}\les
2^{j_1/2}\norm{f_{k_1,j_1}}_{L^2}\cdot2^{j_2/2}\norm{f_{k_2,j_2}}_{L^2}.
\end{eqnarray}
Since by Lemma \ref{resoest} we get in the area $\{|\xi_i| \in
\widetilde{I}_{k_i},i=1,2\}\cap \{|\xi_1+\xi_2|\in
\widetilde{I}_{k_3}\}$
\[|\Omega(\xi_1,\xi_2)|\sim 2^{(2+\alpha)k_3},\]
then by checking the support properties, we get
$1_{\widetilde{D}_{k_3,j_3}}\cdot (f_{k_1,j_1}*f_{k_2,j_2})\equiv 0$
unless $j_{max}\geq (2+\alpha)k_3-30$. Then it follows from
Corollary \ref{cor42} (a) that the left-hand side of \eqref{eq:p722}
is bounded by
\begin{eqnarray}
2^{k_3}\sum_{j_3\geq
[(1-\alpha)k_3]}2^{-j_3/2}2^{j_{min}/2}2^{k_{min}/2}\prod_{i=1}^2\norm{f_{k_i,j_i}}_{L^2}.
\end{eqnarray}
Then we get the bound \eqref{eq:p722} by considering either
$j_3=j_{max}$ or $j_3\ne j_{max}$.
\end{proof}

\begin{proposition}\label{phhl}
If $k_2\geq 20$, $|k_1-k_2|\leq 5$ and $ 0\leq k_3\leq k_1-10$, then
we have
\begin{eqnarray}\label{eq:p741}
\norm{P_{k_3}\partial_x(u_{k_1}v_{k_2})}_{N_{k_3}}\les k_2^2
2^{-(1-\alpha)k_3}2^{(\frac{1}{2}-2\alpha)k_2}
\norm{u_{k_1}}_{F_{k_1}}\norm{v_{k_2}}_{F_{k_2}}.
\end{eqnarray}
\end{proposition}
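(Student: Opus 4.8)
The plan is to follow the proofs of Propositions~\ref{phl} and~\ref{phhh}; the new feature here is a mismatch of time scales. The output sits at the low frequency $2^{k_3}$, whose natural scale $2^{-[(1-\alpha)k_3]}$ is much longer than the scale $2^{-[(1-\alpha)k_1]}\sim 2^{-[(1-\alpha)k_2]}$ on which $F_{k_1}$ and $F_{k_2}$ give control; so the $2^{-[(1-\alpha)k_3]}$--window built into $\norm{\cdot}_{N_{k_3}}$ must first be cut into $\les 2^{(1-\alpha)(k_2-k_3)}$ sub--windows of length $2^{-[(1-\alpha)k_1]}$. This produces the exponential prefactor $2^{(1-\alpha)(k_2-k_3)}$, which will be absorbed by a gain $2^{-(1/2+\alpha)k_2}$ coming from the large resonance.

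Concretely, I would fix $t_0$ in the definition of $\norm{\cdot}_{N_{k_3}}$ and write $\eta_0(2^{[(1-\alpha)k_3]}(t-t_0))=\sum_m\eta_0(2^{[(1-\alpha)k_3]}(t-t_0))\gamma(2^{[(1-\alpha)k_1]}(t-t_0)-m)$ with $\gamma\in\Sch(\R)$ a partition of unity, only $\les 2^{(1-\alpha)(k_2-k_3)}$ values of $m$ contributing. Localizing both inputs to the $m$--th sub--window by cut--offs $\rho_m$ that are $1$ on the support of the $m$--th bump, one has $u_{k_1}v_{k_2}\cdot(m\text{-th bump})=(u_{k_1}\rho_m)(v_{k_2}\rho_m)\cdot(m\text{-th bump})$; then the triangle inequality in $X_{k_3}$, the estimates \eqref{eq:pXk2}--\eqref{eq:pXk3} to absorb the cut--offs, and $\norm{\ft[u_{k_1}\rho_m]}_{X_{k_1}}\les\norm{u_{k_1}}_{F_{k_1}}$, $\norm{\ft[v_{k_2}\rho_m]}_{X_{k_2}}\les\norm{v_{k_2}}_{F_{k_2}}$, reduce the proposition (just as \eqref{eq:p711} was reduced to \eqref{eq:p713}) to the following dyadic bound, uniform in $m$: for $j_1,j_2\ge[(1-\alpha)k_1]$ and $f_{k_i,j_i}\ge0$ supported in $\wt{D}_{k_i,j_i}$ ($i=1,2$, the modulations below $2^{[(1-\alpha)k_1]}$ lumped into the level $[(1-\alpha)k_1]$ as in Remark~\ref{symest}),
\[
2^{k_3}\sum_{j_3}2^{j_3/2-\max(j_3,[(1-\alpha)k_3])}\normo{1_{\wt{D}_{k_3,j_3}}(f_{k_1,j_1}*f_{k_2,j_2})}_{L^2}\les k_2\,2^{-(1/2+\alpha)k_2}\,2^{(j_1+j_2)/2}\prod_{i=1}^2\norm{f_{k_i,j_i}}_{L^2},
\]
the $2^{k_3}$ being the size of the derivative at the output frequency and the $j_3\le[(1-\alpha)k_3]$ part of the sum treated as one term at level $[(1-\alpha)k_3]$. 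Summing over $j_1,j_2$ gives $k_2\,2^{-(1/2+\alpha)k_2}\norm{u_{k_1}}_{F_{k_1}}\norm{v_{k_2}}_{F_{k_2}}$ per sub--window, and summing over the $\les 2^{(1-\alpha)(k_2-k_3)}$ sub--windows gives $k_2\,2^{-(1-\alpha)k_3}2^{(1/2-2\alpha)k_2}\norm{u_{k_1}}_{F_{k_1}}\norm{v_{k_2}}_{F_{k_2}}$, which even improves \eqref{eq:p741}.

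For the dyadic bound I would use that $N_{min}=2^{k_3}\ll N_{med}\sim N_{max}=2^{k_2}$ here (because $k_3\le k_1-10$ and $|k_1-k_2|\le5$), so Corollary~\ref{cor42}(b) applies, and that Lemma~\ref{resoest} gives $|\Omega(\xi_1,\xi_2)|\sim 2^{(1+\alpha)k_2}2^{k_3}$ on the support, whence $1_{\wt{D}_{k_3,j_3}}(f_{k_1,j_1}*f_{k_2,j_2})\equiv0$ unless $j_{max}\ges(1+\alpha)k_2+k_3$. Split according to the largest modulation. If $j_3=j_{max}$, the output weight is $2^{-j_3/2}$, Corollary~\ref{cor42}(b) gives the factor $2^{(j_1+j_2)/2}2^{-\alpha k_2/2}2^{-k_3/2}$, and after the geometric sum in $j_3$ one has $2^{k_3}\cdot2^{-((1+\alpha)k_2+k_3)/2}\cdot2^{-\alpha k_2/2}2^{-k_3/2}=2^{-(1/2+\alpha)k_2}$. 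If $j_3<j_{max}$ and $j_3\le[(1-\alpha)k_3]$, then $j_3$ is the least modulation, the lumped output weight is $2^{-[(1-\alpha)k_3]/2}$, Corollary~\ref{cor42}(b) (``else'' branch, via Remark~\ref{symest}) gives $2^{-(1+\alpha)k_2/2}2^{([(1-\alpha)k_3]+\min(j_1,j_2))/2}$, and $2^{\min(j_1,j_2)/2}\le2^{(j_1+j_2)/2}2^{-((1+\alpha)k_2+k_3)/2}$ together with $2^{k_3}\le2^{k_2}$ gives $2^{-(1/2+\alpha)k_2}$ with no loss. If $j_3<j_{max}$ and $j_3>[(1-\alpha)k_3]$, the resulting bound on the summand is independent of $j_3$; summing over the $\les j_{max}$ admissible values, using that $j_{max}\ges(1+\alpha)k_2+k_3$ makes $j_{max}2^{-j_{max}/2}\les k_2\,2^{-((1+\alpha)k_2+k_3)/2}$, and the same manipulations, yields $k_2\,2^{-(1/2+\alpha)k_2}$. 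Adding the three cases proves the dyadic bound (the stated $k_2^2$ just allows some slack in these summations).

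The main obstacle is the first step: a high$\times$high interaction must be controlled on the comparatively long time window $2^{-[(1-\alpha)k_3]}$, and subdividing it costs the genuinely exponential factor $2^{(1-\alpha)(k_2-k_3)}$. What rescues the estimate --- and hence the scheme \eqref{eq:scheme} in the range $s>1-\alpha$ --- is precisely that when the output frequency is much lower the resonance $|\Omega|\sim N_{max}^{1+\alpha}N_{min}$ is very large, and through Corollary~\ref{cor42}(b) this buys the decay $2^{-(1/2+\alpha)k_2}$, which dominates $2^{(1-\alpha)(k_2-k_3)}$ for the purposes of the Littlewood--Paley summations in \eqref{eq:scheme}; the rest is the same dyadic bookkeeping as in Propositions~\ref{phl} and~\ref{phhh}.
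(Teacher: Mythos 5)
Your overall strategy is exactly the paper's: subdivide the output time window of length $2^{-[(1-\alpha)k_3]}$ into $\les 2^{(1-\alpha)(k_2-k_3)}$ sub-windows of length $2^{-[(1-\alpha)k_2]}$, accept the exponential loss from the number of sub-windows, and recover it from the large resonance via Corollary \ref{cor42}(b). Your dyadic case analysis for $k_3\geq 1$ (splitting on whether $j_3=j_{max}$ and on whether $j_3$ exceeds $[(1-\alpha)k_3]$) is correct and reproduces the paper's exponents.

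The gap is the case $k_3=0$. There $P_{k_3}$ is in effect $P_{\leq 0}$, so the output frequency $|\xi_1+\xi_2|$ ranges over $(0,2]$ and can be arbitrarily small; Lemma \ref{resoest} then gives $|\Omega|\sim 2^{(1+\alpha)k_2}|\xi_1+\xi_2|$, not $\sim 2^{(1+\alpha)k_2}2^{k_3}$, so your lower bound $j_{max}\ges (1+\alpha)k_2+k_3$ --- on which all three of your cases rely --- fails on the part of the support where $|\xi_1+\xi_2|\ll 1$. Moreover Corollary \ref{cor42}(b) is stated for functions localized to genuine dyadic annuli $[2^{k-1},2^{k+1}]$, so it cannot be applied to the un-decomposed low-frequency output. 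The paper's proof repairs this by inserting a further decomposition $\sum_{k_3'\leq 0}\chi_{k_3'}(\xi)$ of the output frequency: for $0\geq k_3'\geq -10k_2$ one runs your argument with $k_3$ replaced by $k_3'$ (the factors $2^{k_3'}$ from the derivative, $2^{-k_3'/2}$ from the geometric sum in $j_3$, and $2^{-k_3'/2}$ from Corollary \ref{cor42}(b) cancel, and the $\les k_2$ admissible values of $k_3'$ produce the second logarithm in the stated $k_2^2$); for $k_3'<-10k_2$ one instead uses Corollary \ref{cor42}(a), where the factor $2^{k_3'}2^{k_3'/2}$ is summably small. With this insertion your argument closes; without it, the $k_3=0$ case --- which is precisely the one needed for the $\bar{A_3}\cup A_3$ sum in Lemma \ref{pbilinear}(b) --- is not proved, and your claimed improvement of $k_2^2$ to $k_2$ disappears.
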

\begin{proof}
Let $\beta:\R\rightarrow [0,1]$ be a smooth function supported in
$[-1,1]$ with the property that
\[\sum_{n\in \Z}\beta^2(x-n)\equiv 1, \quad x\in \R.\]
Using the definitions, the left-hand side of \eqref{eq:p741} is
dominated by
\begin{eqnarray*}
&&C\sup_{t_k\in \R}\sum_{k_3'\leq
k_3}\normb{(\tau-\omega(\xi)+i2^{(1-\alpha){k_3'}_+})^{-1}2^{k_3'}\chi_{k_3'}(\xi)\sum_{|m|\leq
C2^{(1-\alpha)(k_2-{k_3'}_+)}}\\
&& \qquad \ft[u_{k_1}\eta_0(2^{(1-\alpha){k_3'}_+}(t-t_k))\beta(2^{(1-\alpha)k_2}(t-t_k)-m)]*\\
&& \qquad
\ft[u_{k_2}\eta_0(2^{(1-\alpha){k_3'}_+}(t-t_k))\beta(2^{(1-\alpha)k_2}(t-t_k)-m)]}_{X_k}.
\end{eqnarray*}

We assume first $k_3=0$. In view of the definitions, \eqref{eq:pXk2}
and \eqref{eq:pXk3}, it suffices to prove that if $j_1,j_2\geq
[(1-\alpha)k_2]$, and $f_{k_i,j_i}:\R^3 \ra \R_+$ are supported in
$\widetilde{D}_{k_i,j_i}$, $i=1,2$, then
\begin{eqnarray}\label{eq:hhl1}
&&\sum_{k_3'\leq 0} 2^{k_3'}2^{(1-\alpha)k_2}\sum_{j_3\geq 0}
2^{-j_3/2}\norm{\chi_{k_3'}(\xi)\eta_{\leq j_3}(\tau-\omega(\xi))(f_{k_1,j_1}*f_{k_2,j_2})}_{L^2}\nonumber\\
&& \les k_2^2 2^{(\frac{1}{2}-2\alpha)k_2}
2^{j_1/2}\norm{f_{k_1,j_1}}_{L^2}\cdot
2^{j_2/2}\norm{f_{k_2,j_2}}_{L^2}
\end{eqnarray}

To prove \eqref{eq:hhl1}, we may assume $k_3'\geq -10k_2$, since
otherwise we use Corollary \ref{cor42} (a). From Lemma \ref{resoest}
and the supports properties as in Proposition \ref{phhh}, we get
$j_{max}\geq (1+\alpha)k_2+k_3'-30$. Then it follows from Corollary
\ref{cor42} (b) that the left-hand side of \eqref{eq:hhl1} is
bounded by
\begin{eqnarray*}
&&Ck_2\sum_{k_3'\leq 0}
2^{k_3'}2^{(1-\alpha)k_2}2^{-(1+\alpha)k_2/2}2^{-k_3'/2}2^{j_1/2}2^{j_2/2}2^{-\alpha
k_2/2}2^{-k_3'/2}\norm{f_{k_1,j_1}}_{L^2}\norm{f_{k_2,j_2}}_{L^2}\\
&&\les k_2^2
2^{(\frac{1}{2}-2\alpha)k_2}2^{j_1/2}2^{j_2/2}\norm{f_{k_1,j_1}}_{L^2}\norm{f_{k_2,j_2}}_{L^2}
\end{eqnarray*}

We assume now $k_3\geq 1$. It suffices to prove that if $j_1,j_2\geq
[(1-\alpha)k_2]$, and $f_{k_i,j_i}:\R^3 \ra \R_+$ are supported in
$\widetilde{D}_{k_i,j_i}$, $i=1,2$, then
\begin{eqnarray}
&& 2^{k_3}2^{(1-\alpha)(k_2-k_3)}\sum_{j_3\geq (1-\alpha)k_3}
2^{-j_3/2}\norm{\chi_{k_3}(\xi)\eta_{\leq j_3}(\tau-\omega(\xi))(f_{k_1,j_1}*f_{k_2,j_2})}_{L^2}\nonumber\\
&& \les k_2^2 2^{-(1-\alpha)k_3}2^{(\frac{1}{2}-2\alpha)k_2}
2^{j_1/2}\norm{f_{k_1,j_1}}_{L^2}\cdot
2^{j_2/2}\norm{f_{k_2,j_2}}_{L^2}
\end{eqnarray}
which can be proved similarly as \eqref{eq:hhl1}.
\end{proof}

\begin{proposition}\label{plll}
If $0\leq k_1,k_2,k_3\leq 200$, then
\begin{eqnarray}
\norm{P_{k_3}\partial_x(u_{k_1}v_{k_2})}_{N_{k_3}}\les
\norm{u_{k_1}}_{F_{k_1}}\norm{v_{k_2}}_{F_{k_2}}.
\end{eqnarray}
\end{proposition}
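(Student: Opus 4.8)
The plan is to treat Proposition~\ref{plll} as the ``compact frequency'' case, where all three frequencies are $\les 1$ and so every loss of a derivative or of a power of a frequency is harmless; the only thing that must be extracted is the short-time $X^{s,b}$ gain coming from the modulation variable. First I would unfold the definitions exactly as in the proof of Proposition~\ref{phl}: using \eqref{eq:pXk3} and the fact that $[(1-\alpha)k_i]=0$ for $0\le k_i\le 200$ (so no frequency-dependent time localization is actually needed), the left-hand side is dominated by
\begin{eqnarray*}
C\sup_{t_k\in\R}\norm{(\tau-\omega(\xi)+i)^{-1}2^{k_3}1_{I_{k_3}}(\xi)\,\ft[u_{k_1}\eta_0(t-t_k)]*\ft[v_{k_2}\eta_0(t-t_k)]}_{X_{k_3}}.
\end{eqnarray*}
Writing $f_{k_i}=\ft[u_{k_i}\eta_0(t-t_k)]$ and decomposing $f_{k_i}=\sum_{j_i\ge 0}f_{k_i,j_i}$ with $f_{k_i,j_i}$ supported in $\wt D_{k_i,j_i}$, it then suffices to prove the dyadic bound
\begin{eqnarray*}
2^{k_3}\sum_{j_3\ge 0}2^{-j_3/2}\norm{1_{\wt D_{k_3,j_3}}(f_{k_1,j_1}*f_{k_2,j_2})}_{L^2}\les 2^{j_1/2}\norm{f_{k_1,j_1}}_{L^2}\cdot 2^{j_2/2}\norm{f_{k_2,j_2}}_{L^2},
\end{eqnarray*}
and then sum the result using \eqref{eq:pXk2}--\eqref{eq:pXk3} as in the earlier propositions.

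For the dyadic estimate I would simply invoke Corollary~\ref{cor42}(a), which gives the factor $2^{k_{min}/2}2^{j_{min}/2}$ with $k_{min}\le 200$, hence an $O(1)$ gain from $2^{k_{min}/2}$, absorbing the harmless $2^{k_3}=O(1)$ prefactor. The $j_3$-summation then splits into the two usual regimes. If $j_3=j_{max}$, then $2^{-j_3/2}2^{j_{min}/2}\le 2^{-j_3/2}2^{j_{med}/2}$ and summing the geometric series in $j_3$ down to $j_3\ge j_{med}$ costs nothing and leaves $2^{j_{med}/2}2^{j_{min}/2}\le 2^{j_1/2}2^{j_2/2}$ up to the usual relabeling; indeed since two of the three indices are bounded by $\max(j_1,j_2)$ in this case, $2^{(j_{min}+j_{med})/2}\les 2^{(j_1+j_2)/2}$. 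If instead $j_3\ne j_{max}$, then $j_{max}\in\{j_1,j_2\}$ and we keep $2^{j_{min}/2}\le\min(2^{j_1/2},2^{j_2/2})$ while summing $\sum_{j_3}2^{-j_3/2}=O(1)$; again the bound $2^{(j_1+j_2)/2}$ follows. In all cases there is no resonance obstruction to worry about because we are not asking for any smoothing in the frequencies, only the trivial $L^2$ convolution bound with the $2^{j_{min}/2}$ gain.

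The only mild subtlety — and the step I would expect to need the most care — is the bookkeeping at the endpoint modulations: the piece $f_{k_i,0}$ should really be taken as $f_{k_i}\cdot\eta_{\le 0}(\tau-\omega(\xi))$ and the output modulation $j_3=0$ should be handled with $1_{\wt D_{k_3,\le 0}}$, exactly as in \eqref{eq:p714} and Remark~\ref{symest}; Corollary~\ref{cor42}(a) still applies there by that remark. Once this is arranged, the estimate \eqref{eq:pXk2} lets us pass from the $f_{k_i,j_i}$ bounds back to $\norm{f_{k_i}}_{X_{k_i}}$, and \eqref{eq:pXk3} (together with $\eta_0(t-t_k)\in S_{k_i}$) converts these into $\norm{u_{k_1}}_{F_{k_1}}\norm{v_{k_2}}_{F_{k_2}}$, completing the proof. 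No use of Lemma~\ref{resoest}, the bilinear Strichartz estimate, or parts (b)--(c) of the symmetric estimate is needed here — this is genuinely the easy, purely low-frequency case.
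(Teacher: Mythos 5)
Your proposal is correct and follows essentially the same route as the paper, which disposes of this low-frequency case in one line by citing exactly the ingredients you invoke: the definitions, Corollary \ref{cor42}(a), Remark \ref{symest}, and \eqref{eq:pXk2}--\eqref{eq:pXk3}. (One small slip: $[(1-\alpha)k_i]$ need not equal $0$ for $k_i\le 200$ unless $\alpha=1$, but it is bounded by $200$, so every factor $2^{[(1-\alpha)k_i]}$, $2^{k_3}$, $2^{k_{min}/2}$ is $O(1)$ and your argument is unaffected.)
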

\begin{proof}
This follows immediately from the definitions, Corollary \ref{cor42}
(a), Remark \ref{symest} and \eqref{eq:pXk2} and \eqref{eq:pXk3}.
\end{proof}

As a conclusion to this section we prove the bilinear estimates,
using the dyadic bilinear estimates obtained above.
\begin{lemma}\label{pbilinear}
(a) If $s\geq 1-\alpha$, $T\in (0,1]$, and $u,v\in F^{s}(T)$ then
\begin{eqnarray}
\norm{\partial_x(uv)}_{N^{s}(T)}&\les&
\norm{u}_{F^{s}(T)}\norm{v}_{F^{1-\alpha}(T)}+\norm{u}_{F^{1-\alpha}(T)}\norm{v}_{F^{s}(T)}.
\end{eqnarray}

(b)If $T\in (0,1]$, $u\in F^{0}(T)$ and $v\in F^{s_\alpha}(T)$ then
\begin{eqnarray}
\norm{\partial_x(uv)}_{N^{0}(T)}&\les&
\norm{u}_{F^{0}(T)}\norm{v}_{F^{1-\alpha}(T)}.
\end{eqnarray}
\end{lemma}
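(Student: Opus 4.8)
The plan is to run a Littlewood--Paley decomposition and reduce both estimates to the dyadic bilinear bounds of Propositions~\ref{phl}, \ref{phhh}, \ref{phhl} and \ref{plll}. Write $u=\sum_{k_1\geq 0}P_{k_1}u$ and $v=\sum_{k_2\geq 0}P_{k_2}v$ (with $P_0$ abbreviating $P_{\leq 0}$), so that $P_{k_3}\partial_x(uv)=\sum_{k_1,k_2\geq 0}P_{k_3}\partial_x(P_{k_1}u\cdot P_{k_2}v)$, a sum in which (writing $N_i=2^{k_i}$) the summand vanishes unless $N_{max}\sim N_{med}$. Hence, for each fixed output frequency $k_3$, the sum over $(k_1,k_2)$ splits into four regimes matching those propositions: the high--low regime $k_1\leq k_2-10$, $|k_2-k_3|\leq 5$ together with its mirror $k_2\leq k_1-10$, $|k_1-k_3|\leq 5$; the high--high--high regime $|k_1-k_2|\leq 5$, $|k_2-k_3|\leq 5$; the high--high--low regime $|k_1-k_2|\leq 5$, $k_3\leq k_1-10$; and the low-frequency regime in which all three indices are $O(1)$, which accounts for every triple whose largest index is bounded. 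First I would pass to near-optimal extensions $\wt u,\wt v$ of $u,v$ to $\R\times\R$, so that $\norm{P_{k}\wt u}_{F_{k}}$ dominates $\norm{P_{k}u}_{F_{k}(T)}$; by the definition of $N^s(T)$ it then suffices to bound the weighted $\ell^2_{k_3}$ norm of $\sum_{k_1,k_2}\norm{P_{k_3}\partial_x(P_{k_1}\wt u\cdot P_{k_2}\wt v)}_{N_{k_3}}$ regime by regime and take the infimum over extensions at the end.

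In the high--low regime Proposition~\ref{phl} is loss-free, so the contribution at frequency $k_3$ is $\les\norm{P_{k_3}\wt v}_{F_{k_3}}\sum_{k_1<k_3}\norm{P_{k_1}\wt u}_{F_{k_1}}$; summing the low index by Cauchy--Schwarz against the weight $2^{-(1-\alpha)k_1}$, which is summable since $1-\alpha>0$, gives $\les\norm{\wt u}_{F^{1-\alpha}}\norm{P_{k_3}\wt v}_{F_{k_3}}$, whose $\ell^2_{k_3}(2^{sk_3})$ norm is $\norm{\wt u}_{F^{1-\alpha}}\norm{\wt v}_{F^{s}}$; the mirror subcase produces the companion term $\norm{\wt u}_{F^{s}}\norm{\wt v}_{F^{1-\alpha}}$. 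In the high--high--high regime $k_1\sim k_2\sim k_3$, so the inner sum is finite, Proposition~\ref{phhh} is again loss-free, and one keeps the weight $2^{sk_1}$ on one factor and, after discarding the harmless $2^{-(1-\alpha)k_2}\leq 1$, the weight $2^{(1-\alpha)k_2}$ on the other, then sums the $O(1)$ terms. The low-frequency regime is a finite sum handled directly by Proposition~\ref{plll}. None of these three requires more than geometric-series bookkeeping.

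The high--high--low regime is the crux and the main obstacle. Here Proposition~\ref{phhl} carries a genuine loss $k_2^{2}\,2^{-(1-\alpha)k_3}2^{(1/2-2\alpha)k_2}$, and the frequency power $2^{(1/2-2\alpha)k_2}$ actually \emph{grows} when $\alpha<1/4$. The way to close it is as follows. The output weight $2^{sk_3}$ is spent absorbing the factor $2^{-(1-\alpha)k_3}$, producing $2^{(s-(1-\alpha))k_3}\geq 1$, which is bounded by $2^{(s-(1-\alpha))k_2}$ because $k_3\leq k_1-10< k_2$ in this regime; one then writes $\norm{P_{k_1}\wt u}_{F_{k_1}}\norm{P_{k_2}\wt v}_{F_{k_2}}\leq 2^{-sk_1}2^{-(1-\alpha)k_2}\big(2^{sk_1}\norm{P_{k_1}\wt u}_{F_{k_1}}\big)\big(2^{(1-\alpha)k_2}\norm{P_{k_2}\wt v}_{F_{k_2}}\big)$, and the net exponent of $2^{k_2}$ works out to $(s-(1-\alpha))+(1/2-2\alpha)-s-(1-\alpha)=-3/2$. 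Thus the $k_2$-sum becomes $\sum_{k_2> k_3}k_2^{2}\,2^{-3k_2/2}(\cdots)$, which converges and, after a Schur/Cauchy--Schwarz step to reassemble the $\ell^2$ over $k_3$, contributes $\les\norm{\wt u}_{F^{s}}\norm{\wt v}_{F^{1-\alpha}}$; the other distribution of the weights yields the mirror term. This step is exactly where the hypothesis $s\geq 1-\alpha$ is used and where the two-term right-hand side of (a) is forced. Combining the four regimes and taking the infimum over $\wt u,\wt v$ proves (a).

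For part (b) the same decomposition applies with $s=0$ on the output frequency. The high--low, high--high--high and low-frequency regimes go through as before with $\norm{\cdot}_{F^{0}}$ in place of the derivative-weighted norm on the low factor; the only new wrinkle is that, since the low factor is now controlled only in $F^{0}$, summing it over the low frequencies costs a harmless polynomial power of the output frequency, which is absorbed by the decay $2^{-(1-\alpha)k}$ coming from $\norm{v}_{F^{1-\alpha}}$. The high--high--low regime becomes \emph{easier}: with no positive $2^{sk_3}$ weight to fight, the factor $2^{-(1-\alpha)k_3}$ in Proposition~\ref{phhl} is pure gain, and one only needs enough Sobolev weight on the high-frequency factor $P_{k_2}v$ to converge the $k_2$-sum, which is guaranteed by $v\in F^{s_\alpha}(T)$ with $s_\alpha\geq 1-\alpha$, so that $\norm{v}_{F^{1-\alpha}(T)}\leq\norm{v}_{F^{s_\alpha}(T)}$. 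In short, the only genuinely delicate point in the whole argument is the high--high--low interaction: it is the sole regime with a nontrivial dyadic loss, that loss can grow with frequency for small $\alpha$, and reassembling the $\ell^2$ sum over the output frequency takes care; the rest is routine dyadic summation.
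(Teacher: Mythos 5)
Your proof is correct and follows essentially the same route as the paper's: fix near-optimal extensions, split the frequency sum into the high--low, high--high--high, high--high--low and low-frequency regimes, apply Propositions \ref{phl}--\ref{plll} to each, and perform the dyadic summation. Your explicit bookkeeping in the high--high--low regime (the net exponent $-3/2$ and the use of $s\ge 1-\alpha$) simply fleshes out the step the paper compresses into the remark that $1-\alpha>1/2-2\alpha$.
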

\begin{proof}
Since $P_kP_j=0$ if $k\neq j$ and $k,j\in \Z_+$, then we can fix
extensions $\wt{u}, \wt{v}$ of $u, v$ such that $\norm{P_k(\wt
u)}_{F_k}\leq 2\norm{P_k(u)}_{F_k(T)}$ and $\norm{P_k(\wt
v)}_{F_k}\leq 2\norm{P_k(v)}_{F_k(T)}$ for any $k\in \Z_+$. In view
of definition, we get
\begin{eqnarray*}
\norm{\partial_x(u v
)}_{N^{s}(T)}^2\les\sum_{k_3=0}^{\infty}2^{2sk_3}\norm{P_{k_3}(\partial_x(\wt
u \wt v))}_{N_{k_3}}^2.
\end{eqnarray*}
For $k\in \Z_+$ let $\wt{u}_{k}=P_{k}(\wt u)$ and
$\wt{v}_{k}=P_{k}(\wt v)$, then we get
\begin{eqnarray}\label{eq:bilineares1}
\norm{P_{k_3}(\partial_x(\wt u \wt v))}_{N_{k_3}}\les
\sum_{k_1,k_2\in \Z_+}\norm{P_{k_3}(\partial_x(\wt u_{k_1} \wt
v_{k_2}))}_{N_{k_3}}.
\end{eqnarray}
From symmetry we may assume $k_1\leq k_2$. Dividing the summation on
the right-hand side of \eqref{eq:bilineares1} into several parts, we
get
\begin{eqnarray}
\sum_{k_1,k_2\in \Z_+}\norm{P_{k_3}(\partial_x(\wt u_{k_1} \wt
v_{k_2}))}_{N_{k_3}}&\les& \sum_{i=1}^4
\sum_{{A_i}}\norm{P_{k_3}(\partial_x(\wt u_{k_1} \wt
v_{k_2}))}_{N_{k_3}}
\end{eqnarray}
where we denote
\begin{eqnarray*}
&&A_1=\{k_1\leq k_2: |k_2-k_3|\leq 5, k_1\leq k_2-10, \mbox{ and }
k_2\geq
20\};\\
&&A_2=\{k_1\leq k_2: |k_2-k_3|\leq 5, |k_1-k_2|\leq 10, \mbox{ and }
k_2\geq
20\};\\
&&A_3=\{k_1\leq k_2: k_3\leq k_2-10, |k_1-k_2|\leq 5, \mbox{ and }
k_1 \geq 20\}.\\
&&A_4=\{k_1\leq k_2: k_1,k_2,k_3\leq 200\}.
\end{eqnarray*}

For part (a), it suffices to prove that for $i=1,2,3,4$ then
\begin{eqnarray}
\normo{2^{sk_3}\sum_{{A_i}}\norm{P_{k_3}(\partial_x(\wt u_{k_1} \wt
v_{k_2}))}_{N_{k_3}}}_{l^2_{k_3}}\les \norm{\wt u}_{F^{s}}\norm{\wt
v}_{F^{s_\alpha}}+\norm{\wt u}_{F^{s_\alpha}}\norm{\wt v}_{F^{s}},
\end{eqnarray}
which follows from Proposition \ref{phl}-\ref{plll}. For part (b),
it suffices to prove
\begin{eqnarray}\label{eq:bib}
\normo{\sum_{{k_1,k_2\in \Z_+}}\norm{P_{k_3}(\partial_x(\wt u_{k_1}
\wt v_{k_2}))}_{N_{k_3}}}_{l^2_{k_3}}\les \norm{\wt
v}_{F^{0}}\norm{\wt u}_{F^{s_\alpha}}.
\end{eqnarray}
Similarly we divide the summation on the left-hand side of
\eqref{eq:bib} into many pieces, but now we do not have symmetries.
We denote for $i=1,2,3,4$
\[\bar{A_i}=\{(k_1,k_2): (k_2,k_1)\in
A_i\}.\] For the summation in $\bar{A_1}\cup A_1$ we can get easily
control it using Proposition \ref{phl}. The contributions of the
summation in $\bar{A_2}\cup A_2$ and $\bar{A_4}\cup A_4$ are
acceptable due to Proposition \ref{phhh} and \ref{plll}. For the
summation in $\bar{A_3}\cup A_3$ we use Proposition \ref{phhl} since
for $0\leq \alpha \leq 1$ we have $1-\alpha>1/2-2\alpha$.
\end{proof}

\section{Proof of Theorem \ref{thmmain}}

In this section we devote to prove Theorem \ref{thmmain}. The main
ingredients are energy estimates which is proved in the next section
and short-time bilinear estimates obtained in the last section. The
idea is due to Ionescu, Kenig and Tataru \cite{IKT}.

\begin{proposition}\label{pFstoHs}
Let $s\geq 0$, $T\in (0,1]$, and $u\in F^{s}(T)$, then
\begin{equation}
\sup_{t\in [-T,T]}\norm{u(t)}_{{H}^s}\les\ \norm{u}_{F^{s}(T)}.
\end{equation}
\end{proposition}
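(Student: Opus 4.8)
The plan is to reduce the claim to a frequency-localized statement and then unwind the definitions of $F_k$ and $X_k$. First I would fix an extension $\wt u$ of $u$ with $\norm{\wt u}_{F^s}\les\norm{u}_{F^s(T)}$, and observe that since the $H^s$ norm is square-summable over the Littlewood-Paley pieces, it suffices to prove the dyadic bound
\begin{equation}\label{eq:dyadicFstoHs}
\sup_{t\in[-T,T]}\norm{P_k(\wt u)(t)}_{L^2}\les \norm{P_k(\wt u)}_{F_k}
\end{equation}
for each $k\in\Z_+$, with an implicit constant independent of $k$; summing the squares of \eqref{eq:dyadicFstoHs} against $2^{2sk}$ then gives the proposition. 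So the real work is at a single dyadic frequency.

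To prove \eqref{eq:dyadicFstoHs}, fix $t_0\in[-T,T]$ and choose $t_k=t_0$ in the definition of $\norm{\cdot}_{F_k}$. Writing $f=\ft[P_k(\wt u)\cdot\eta_0(2^{[(1-\alpha)k]}(t-t_0))]$, I have $\norm{f}_{X_k}\le\norm{P_k(\wt u)}_{F_k}$, and since $\eta_0$ equals $1$ near $0$, $P_k(\wt u)(t_0)$ agrees with $\ft^{-1}(f)$ evaluated at time $t_0$; hence $\norm{P_k(\wt u)(t_0)}_{L^2_x}=\norm{\ft^{-1}(f)(t_0)}_{L^2_x}$. Now I would bound $\norm{\ft^{-1}(f)(t_0)}_{L^2_x}$ by $\norm{f}_{X_k}$. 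This is the standard embedding $X_k\hookrightarrow C(\R:L^2_x)$: by Fourier inversion in $\tau$, $\ft^{-1}(f)(t_0)$ has spatial Fourier transform $\int_\R \wh{(\ft^{-1}(f))}(\xi,\tau)e^{it_0\tau}\,d\tau$, whose $L^2_\xi$ norm is at most $\normo{\int_\R |f(\xi,\tau)|\,d\tau}_{L^2_\xi}$, and this is $\les\norm{f}_{X_k}$ by the elementary inequality \eqref{eq:pXk1}. Chaining these estimates and taking the supremum over $t_0\in[-T,T]$ gives \eqref{eq:dyadicFstoHs}.

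The argument is essentially bookkeeping, so I do not expect a serious obstacle; the one point requiring a little care is the interface between the local space $F^s(T)$ and its global counterpart — i.e. justifying that one may pass to an extension $\wt u$ and that the infimum in the definition of $F_k(T)$ is compatible with the $\ell^2$ Littlewood-Paley assembly (this uses that the $P_k$ are disjoint projections on $\Z_+$, exactly as in the proof of Lemma \ref{pbilinear}). The only genuinely quantitative input is \eqref{eq:pXk1}, which is already recorded, so once the reduction to \eqref{eq:dyadicFstoHs} is in place the proof closes immediately.
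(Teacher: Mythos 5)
Your proposal is correct and follows essentially the same route as the paper: reduce to a single dyadic frequency, center the cutoff $\eta_0(2^{[(1-\alpha)k]}(t-t_0))$ at the time $t_0$ being evaluated so that the localized function agrees with $P_k(\wt u)$ there, write the spatial Fourier transform at time $t_0$ as $c\int_\R f(\xi,\tau)e^{it_0\tau}\,d\tau$, and conclude via \eqref{eq:pXk1}. The only difference is that you spell out the extension/Littlewood--Paley bookkeeping that the paper compresses into ``in view of the definitions.''
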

\begin{proof}
 In view of the definitions, it suffices to prove that if $k\in
\Z_+$, $t_k\in [-1,1]$, and $\wt u_k \in F_k$ then
\begin{equation}
\norm{\ft[\wt u_k(t_k)]}_{L_\xi^2}\les \norm{\ft[\wt u_k\cdot
\eta_0(2^{[(1-\alpha)k]}(t-t_k))]}_{X_k}.
\end{equation}
Let $f_k=\ft[\wt u_k\cdot \eta_0(2^{[(1-\alpha)k]}(t-t_k))]$, so
\[\ft[\wt u_k(t_k)](\xi)=c\int_\R f_k(\xi,\tau)e^{it_k\tau}d\tau.\]
From the definition of $X_k$, we get that
\[ \norm{\ft[\wt
u_k(t_k)]}_{L_\xi^2}\les \normo{\int_\R
|f_k(\xi,\tau)|d\tau}_{L_\xi^2}\les \norm{f_k}_{X_k},\] which
completes the proof of the proposition.
\end{proof}

\begin{proposition}\label{plinear}
Assume $T\in (0,1]$, $u,v\in C([-T,T]:H^\infty)$ and
\begin{equation}\label{eq:lBO}
u_t+|\partial_x|^{1+\alpha}\partial_x u=v \mbox{ on } \R\times
(-T,T).
\end{equation}
Then for any $s\geq 0$,
\begin{equation}\label{eq:p921}
\norm{u}_{F^{s}(T)}\les \ \norm{u}_{E^{s}(T)}+\norm{v}_{N^{s}(T)}.
\end{equation}
\end{proposition}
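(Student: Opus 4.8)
The plan is to reduce the linear estimate \eqref{eq:p921} to a frequency-localized statement and then to a fixed-time-data estimate that can be closed using the $X_k$ machinery developed in Section 2. First I would apply the Littlewood-Paley decomposition: since the norms $F^s(T)$, $E^s(T)$, $N^s(T)$ are all defined as $\ell^2_{k}$ sums of dyadic pieces weighted by $2^{sk}$, and $P_k$ commutes with the linear operator $\partial_t + |\partial_x|^{1+\alpha}\partial_x$, it suffices to prove the dyadic estimate
\begin{equation*}
\norm{P_k u}_{F_k(T)} \les \sup_{t_k\in[-T,T]}\norm{P_k u(t_k)}_{L^2} + \norm{P_k v}_{N_k(T)}
\end{equation*}
for each $k\in\Z_+$, with an implied constant independent of $k$; squaring, multiplying by $2^{2sk}$ and summing then gives \eqref{eq:p921}. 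Here $P_k u$ solves $\partial_t(P_k u)+|\partial_x|^{1+\alpha}\partial_x(P_k u)=P_k v$ on $\R\times(-T,T)$.

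Next I would unwind the definitions of $F_k(T)$ and $N_k(T)$. By definition of the local space, I need to produce an extension $\wt u$ of $P_k u$ to $\R\times\R$ with $\norm{\wt u}_{F_k}$ controlled. The natural choice is to extend via the Duhamel formula localized to a time interval of length $2^{-[(1-\alpha)k]}$ around a point $t_k$: roughly, $\wt u(t) = W(t-t_k)\big(P_k u(t_k)\big)\,\eta_0(2^{[(1-\alpha)k]}(t-t_k)) + \text{(Duhamel term built from an extension of }P_k v)$. Taking the extension $\wt v$ of $P_k v$ that nearly realizes $\norm{P_k v}_{N_k(T)}$, the point is to estimate, for each $t_k\in\R$, the quantity $\norm{\ft[\wt u\cdot\eta_0(2^{[(1-\alpha)k]}(t-t_k))]}_{X_k}$. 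The homogeneous part is handled by the standard fact that the $X_k$ norm of $\ft[\eta_0(2^{[(1-\alpha)k]}t)W(t)\phi]$ is comparable to $2^{[(1-\alpha)k]/2}\norm{\phi}_{L^2}$ — which, however, looks like a loss. This is where the genuine content lies: one must exploit that $\norm{\cdot}_{X_k}$ carries the $2^{j/2}$ weight and that $\eta_0(2^{[(1-\alpha)k]}\cdot)$ has Fourier support of width $\sim 2^{[(1-\alpha)k]}$, so the modulation variable $\tau-\omega(\xi)$ is forced to be $\les 2^{[(1-\alpha)k]}$ on the bulk; together with the normalization built into the definition of $E^s(T)$ (which is why $E^s$ was "defined sufficiently large", as the introduction emphasizes) the homogeneous contribution is exactly $\norm{P_k u(t_k)}_{L^2}$ up to constants. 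For the Duhamel/inhomogeneous part, I would use the identity relating the solution operator to the $(\tau-\omega(\xi)+i2^{[(1-\alpha)k]})^{-1}$ multiplier appearing in the definition of $N_k$: applying $\eta_0(2^{[(1-\alpha)k]}(t-t_k))$ to a Duhamel integral and Fourier transforming produces precisely a convolution of $\ft[\wt v\cdot\eta_0]$ with the kernel $\ft[\mathbf 1_{[0,t]}\eta_0(2^{[(1-\alpha)k]}\cdot)]$, whose $L^1$-type decay in the modulation variable is the bound $2^{-l}(1+2^{-l}|\tau-\tau'|)^{-4}$ with $l=[(1-\alpha)k]$ used in \eqref{eq:pXk2}; this converts the $(\tau-\omega+i2^{l})^{-1}$ weight in $\norm{\wt v}_{N_k}$ into the $2^{j/2}$-summability of $X_k$.

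The key steps in order: (1) Littlewood-Paley reduction to the uniform dyadic estimate; (2) fix $k$ and $t_k$, write the localized extension of $P_ku$ as homogeneous evolution of the data at $t_k$ plus a localized Duhamel term built from the near-optimal $N_k(T)$-extension $\wt v$ of $P_kv$; (3) bound the homogeneous part's $X_k$ norm by $\norm{P_ku(t_k)}_{L^2}$ using the support width $2^{[(1-\alpha)k]}$ of the cutoff together with the definition of $E^s(T)$; (4) bound the Duhamel part's $X_k$ norm by $\norm{P_kv}_{N_k(T)}$ using the kernel estimate \eqref{eq:pXk2}–\eqref{eq:pXk3} to absorb the $(\tau-\omega(\xi)+i2^{[(1-\alpha)k]})^{-1}$ weight; (5) take the supremum over $t_k$, square, weight by $2^{2sk}$, and sum in $k$. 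I expect step (4) — correctly matching the localized Duhamel kernel against the $X_k$/$N_k$ structure so that no power of $2^k$ is lost and the constant is $k$-uniform — to be the main obstacle, since it requires the precise interplay between the time-localization scale $2^{-[(1-\alpha)k]}$, the imaginary shift $i2^{[(1-\alpha)k]}$ in the definition of $N_k$, and the $\ell^1_j$ summation in $X_k$; the homogeneous estimate in step (3) is the reason the energy space $E^s(T)$ had to be chosen larger than the naive $L^2$-based choice, and keeping that bookkeeping straight is the secondary difficulty.
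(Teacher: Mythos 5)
Your overall architecture matches the paper's: reduce to the uniform dyadic estimate $\norm{P_k u}_{F_k(T)}\les \sup_{t_k}\norm{P_k u(t_k)}_{L^2}+\norm{P_k v}_{N_k(T)}$, split the windowed solution into a free part plus a Duhamel part, and close the Duhamel part by the explicit kernel bound that converts the $(\tau-\omega(\xi)+i2^{[(1-\alpha)k]})^{-1}$ weight of $N_k$ into the $\ell^1_j\,2^{j/2}L^2$ structure of $X_k$ via \eqref{eq:pXk2}--\eqref{eq:pXk3}. That is exactly the paper's route. However, your step (3) contains a genuine error. You assert that $\norm{\ft[\eta_0(2^{[(1-\alpha)k]}t)W(t)\phi]}_{X_k}$ is comparable to $2^{[(1-\alpha)k]/2}\norm{\phi}_{L^2}$ and then claim this apparent loss is cancelled by a ``normalization built into the definition of $E^s(T)$.'' Both halves are wrong. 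A direct computation gives $\ft[\eta_0(2^{l}t)W(t)\phi](\xi,\tau)=\widehat\phi(\xi)\,2^{-l}\widehat{\eta_0}(2^{-l}(\tau-\omega(\xi)))$, and since $\sum_j 2^{j/2}\norm{\eta_j(\mu)\,2^{-l}\widehat{\eta_0}(2^{-l}\mu)}_{L^2_\mu}\les \sum_{j\leq l}2^{j-l}+\sum_{j>l}2^{j-l}2^{-4(j-l)}\les 1$, the $X_k$ norm of the windowed free evolution is $\sim\norm{\phi}_{L^2}$ with \emph{no} factor $2^{l/2}$; the $2^{j/2}$ weight paired with the $L^2_\tau$ norm makes $X_k$ behave like an $L^1_\tau$ quantity, which is scale invariant under the $2^{-l}$ dilation. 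Meanwhile $E^s(T)$ carries no compensating weight at all --- it is the plain $\ell^2_k$-weighted supremum over $t_k\in[-T,T]$ of $\norm{P_k u(t_k)}_{L^2}$; the sense in which it is ``large'' is the supremum over window centers, not any $2^{l/2}$ normalization. If the homogeneous bound really did lose $2^{[(1-\alpha)k]/2}$, the proposition would be false as stated, so your argument as written does not close even though its conclusion happens to be correct.

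A secondary gap: the $F_k$ norm takes a supremum over \emph{all} $t_k\in\R$, so you must exhibit a single global extension $\wt u$ of $P_k u$; your formula $\wt u(t)=W(t-t_k)(P_k u(t_k))\eta_0(\cdots)+\cdots$ depends on the window center $t_k$ and is therefore not an extension in the sense required by the definition of $F_k(T)$. The paper handles this by gluing truncated free-plus-Duhamel evolutions onto $u$ at $t=\pm T$ (using cutoffs in $S_k$ and \eqref{eq:Sk}) and then proving \eqref{eq:extu}, which shows via \eqref{eq:pXk3} that windows centered outside $[-T,T]$ are dominated by the windows at $\pm T$. You would need this step, or an equivalent one, to pass from the per-window estimate to the actual $F_k(T)$ norm.
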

\begin{proof}
In view of the definitions, we see that the square of the right-hand
side of \eqref{eq:p921} is equivalent to
\begin{eqnarray*}
&&\norm{P_{\leq 0}(u(0))}_{L^2}^2+\norm{P_{\leq
0}(v)}_{N_k(T)}^2\\
&&+\sum_{k\geq 1}\big(\sup_{t_k\in
[-T,T]}2^{2sk}\norm{P_k(u(t_k))}_{L^2}^2
+2^{2sk}\norm{P_k(v)}_{N_k(T)}^2\big).
\end{eqnarray*}
Thus, from definitions, it suffices to prove that if $k\in \Z_+$ and
$u,v \in C([-T,T]:H^\infty)$ solve \eqref{eq:lBO}, then
\begin{eqnarray}\label{eq:retardlinear}
\left\{\begin{array}{l} \norm{P_{\leq 0}(u)}_{F_0(T)}\les
\norm{P_{\leq 0}(u(0))}_{L^2}+\norm{P_{\leq 0}(v)}_{N_0(T)};\\
\norm{P_k(u)}_{F_k(T)}\les \sup_{t_k\in
[-T,T]}\norm{P_k(u(t_k))}_{L^2}+\norm{P_k(v)}_{N_k(T)} \mbox{ if }
k\geq 1.
\end{array}
\right.
\end{eqnarray}

We only prove the second inequality in \eqref{eq:retardlinear},
since the first one can be treated in the same ways. Fix $k\geq 1$
and let $\wt v$ denote an extension of $P_k(v)$ such that $\norm{\wt
v}_{N_k}\leq C\norm{v}_{N_k(T)}$. In view of \eqref{eq:Sk}, we may
assume that $\wt v$ is supported in $\R\times
[-T-2^{-[(1-\alpha)k]-10},T+2^{-[(1-\alpha)k]-10}]$. Indeed, let
$\theta(t)$ be a smooth function such that
\[\theta(t)=1, \mbox{ if }t\geq 1;\quad \theta(t)=0, \mbox{ if } t\leq 0.\]
Thus $\theta(2^{[(1-\alpha)k]+10}(t+T+2^{-[(1-\alpha)k]-10}))$,
$\theta(-2^{[(1-\alpha)k]+10}(t-T-2^{-[(1-\alpha)k]-10})) \in S_k$.
Then we see that
$\theta(2^{[(1-\alpha)k]+10}(t+T+2^{-[(1-\alpha)k]-10}))\theta(-2^{[(1-\alpha)k]+10}(t-T-2^{-[(1-\alpha)k]-10}))$
is supported in $[-T-2^{-[(1-\alpha)k]-10},T+2^{-[(1-\alpha)k]-10}]$
and equal to $1$ in $[-T,T]$. From \eqref{eq:Sk} we consider
$\wt{v}\theta(2^{k+10}(t+T+2^{-k-10}))\theta(-2^{k+10}(t-T-2^{-k-10}))$
instead. For $t\geq T$ we define
\[\wt u (t)=\eta_0(2^{[(1-\alpha)k]+5}(t-T))\big[W(t-T)P_k(u(T))+\int_T^tW(t-s)(P_k(\wt v(s)))ds \big].\]
For $t\leq -T$ we define
\[\wt u (t)=\eta_0(2^{[(1-\alpha)k]+5}(t+T))\big[W(t+T)P_k(u(-T))+\int_{-T}^tW(t-s)(P_k(\wt v(s)))ds \big].\]
For $t\in [-T,T]$ we define $\wt u(t)=u(t)$. It is clear that $\wt
u$ is an extension of u and we get from \eqref{eq:Sk} that
\begin{eqnarray}\label{eq:extu}
\norm{u}_{F_k(T)}\les \sup_{t_k\in [-T,T]}\norm{\ft[\wt u \cdot
\eta_0(2^{[(1-\alpha)k]}(t-t_k))]}_{X_k}.
\end{eqnarray}
Indeed, to prove \eqref{eq:extu}, it suffices to prove that
\begin{eqnarray}
\sup_{t_k\in \R}\norm{\ft[\wt u \cdot
\eta_0(2^{[(1-\alpha)k]}(t-t_k))]}_{X_k}\les \sup_{t_k\in
[-T,T]}\norm{\ft[\wt u \cdot
\eta_0(2^{[(1-\alpha)k]}(t-t_k))]}_{X_k}.
\end{eqnarray}
For $t_k>T$, since $\wt{u}$ is supported in
$[-T-2^{-[(1-\alpha)k]-5},T+2^{-[(1-\alpha)k]-5}]$, it is easy to
see that
\[\wt{u}\eta_0(2^{[(1-\alpha)k]}(t-t_k))=\wt{u}\eta_0(2^{[(1-\alpha)k]}(t-T))\eta_0(2^{[(1-\alpha)k]}(t-t_k)).\]
Therefore, we get from \eqref{eq:pXk3} that
\[\sup_{t_k>T}\norm{\ft[\wt u \cdot
\eta_0(2^{k}(t-t_k))]}_{X_k}\les \sup_{t_k\in [-T,T]}\norm{\ft[\wt u
\cdot \eta_0(2^{k}(t-t_k))]}_{X_k}.\] Using the same method for
$t_k<-T$, we obtain \eqref{eq:extu} as desired.

Now we prove the second inequality in \eqref{eq:retardlinear}. In
view of the definitions, \eqref{eq:extu} and \eqref{eq:pXk3}, it
suffices to prove that if $\phi_k \in L^2$ with $\widehat{\phi_k}$
supported in $I_k$, and $v_k\in N_k$ then
\begin{eqnarray}
\norm{\ft[u_k\cdot \eta_0(2^{[(1-\alpha)k]}t)]}_{X_k}\les
\norm{\phi_k}_{L^2}+\norm{(\tau-\omega(\xi)+i2^{[(1-\alpha)k]})^{-1}\cdot
\ft(v_k)}_{X_k},
\end{eqnarray}
where
\begin{equation}
u_k(t)=W(t)(\phi_k)+\int_0^tW(t-s)(v_k(s))ds.
\end{equation}
Straightforward computations show that
\begin{eqnarray*}
&&\ft[u_k\cdot
\eta_0(2^{[(1-\alpha)k]}t)](\xi,\tau)=\widehat{\phi_k}(\xi)\cdot
2^{-[(1-\alpha)k]}\widehat{\eta_0}(2^{-[(1-\alpha)k]}(\tau-\omega(\xi)))\\
&&+C\int_\R \ft(v_k)(\xi,\tau')\cdot
\frac{\widehat{\eta_0}(2^{-[(1-\alpha)k]}(\tau-\tau'))-\widehat{\eta_0}(2^{-[(1-\alpha)k]}(\tau-\omega(\xi)))}{2^{[(1-\alpha)k]}(\tau'-\omega(\xi))}d\tau'.
\end{eqnarray*}
We observe now that
\begin{eqnarray*}
&&\aabs{\frac{\widehat{\eta_0}(2^{-[(1-\alpha)k]}(\tau-\tau'))-\widehat{\eta_0}(2^{-[(1-\alpha)k]}(\tau-\omega(\xi)))}{2^{[(1-\alpha)k]}(\tau'-\omega(\xi))}\cdot
(\tau'-\omega(\xi)+i2^{[(1-\alpha)k]})}\\
&&\les \
2^{-[(1-\alpha)k]}(1+2^{-[(1-\alpha)k]}|\tau-\tau'|)^{-4}+2^{-[(1-\alpha)k]}(1+2^{-[(1-\alpha)k]}|\tau-\omega(\xi)|)^{-4}.
\end{eqnarray*}
Using \eqref{eq:pXk1} and \eqref{eq:pXk2}, we complete the proof of
the proposition.
\end{proof}

Now we turn to prove Theorem \ref{thmmain}. To prove Theorem
\ref{thmmain} (a), by the scaling \eqref{eq:scaling} we may assume
that
\begin{equation}\label{eq:smalldata}
\norm{u_0}_{H^s}\leq \epsilon\ll 1.
\end{equation}
The uniqueness follows from the classical energy methods. We only
need to construct the solution on the time interval $[-1,1]$. In
view of the classical results, it suffices to prove that if $T\in
(0,1]$ and $u\in C([-T,T]:H^\infty)$ is a solution of
\eqref{eq:dgBO} with $\norm{u_0}_{H^s}\leq \epsilon\ll 1$ then
\begin{eqnarray}\label{eq:H2est}
\sup_{t\in [-T,T]}\norm{u(t)}_{H^2}\les \norm{u_0}_{H^2}.
\end{eqnarray}

It follows from Proposition \ref{plinear}, Proposition
\ref{pbilinear} and the energy estimate Proposition \ref{penergy}
that for any $T'\in [0,T]$ we have
\begin{eqnarray}\label{eq:Fsest}
\left \{
\begin{array}{l}
\norm{u}_{F^{s}(T')}\les \norm{u}_{E^{s}(T')}+\norm{\partial_x(u^2)}_{N^{s}(T')};\\
\norm{\partial_x(u^2)}_{N^{s}(T')}\les \norm{u}_{F^{s}(T')}^2;\\
\norm{u}_{E^{s}(T')}^2\les
\norm{\phi}_{{H}^s}^2+\norm{u}_{F^{s}(T')}^3.
\end{array}
\right.
\end{eqnarray}
We denote
$X(T')=\norm{u}_{E^s(T')}+\norm{\partial_x(u^2)}_{N^{s}(T')}$. Then
by a similar argument as in the proof of Lemma 4.2 in \cite{IKT}, we
know $X(T')$ is continuous and satisfies
\[\lim_{T'\rightarrow 0}X(T')\les \norm{u_0}_{H^s}. \]
On the other hand, we get from \eqref{eq:Fsest} that
\begin{eqnarray*}
X(T')^2\les \norm{u_0}_{H^s}^2+X(T')^3+X(T')^4.
\end{eqnarray*}
If $\epsilon_0$ is sufficiently small, then we can get from
\eqref{eq:smalldata}, the continuity and the standard bootstrap that
$X(T')\les \norm{u_0}_{H^s}$ and therefore we obtain
\begin{eqnarray}\label{eq:smallFs}
\norm{u}_{F^s(T)}\les \norm{u_0}_{H^s}.
\end{eqnarray}

For $\sigma\geq s$ we obtain from Proposition \ref{plinear},
Proposition \ref{pbilinear} (a) and the energy estimate Proposition
\ref{penergy} that for any $T'\in [0,T]$ we have
\begin{eqnarray}\label{eq:Fsest2}
\left \{
\begin{array}{l}
\norm{u}_{F^{\sigma}(T')}\les \norm{u}_{E^{\sigma}(T')}+\norm{\partial_x(u^2)}_{N^{\sigma}(T')};\\
\norm{\partial_x(u^2)}_{N^{\sigma}(T')}\les \norm{u}_{F^{\sigma}(T')}\norm{u}_{F^{s}(T')};\\
\norm{u}_{E^{\sigma}(T')}^2\les
\norm{\phi}_{{H}^\sigma}^2+\norm{u}_{F^{s}(T')}\norm{u}_{F^{\sigma}(T')}^2.
\end{array}
\right.
\end{eqnarray}
Then from \eqref{eq:smallFs} we get $\norm{u}_{F^s(T)}\ll 1$ and
hence
\begin{eqnarray}
\norm{u}_{F^\sigma(T)}\les \norm{u_0}_{H^\sigma},
\end{eqnarray}
which in particularly implies \eqref{eq:H2est} as desired. We
complete the proof of part (a).

We prove now Theorem \ref{thmmain} (b), following the ideas in
\cite{IKT}. Fixing $u_0\in H^s$, then we choose $\{\phi_n\}\subset
H^\infty$ such that $\lim_{n\rightarrow \infty} \phi_n=u_0$ in
$H^s$. It suffices to prove the sequence $S_T^\infty(\phi_n)$ is a
Cauchy sequence in $C([-T,T]:H^s)$. From the definition it suffices
to prove that for any $\delta>0$ there is $M_\delta$ such that
\[\sup_{t\in [-T,T]}\norm{S_T^\infty(\phi_m)-S_T^\infty(\phi_n)}_{H^s}\leq \delta, \quad \forall m,n\geq M_\delta.\]
For $K\in \Z_+$ let $\phi_n^K=P_{\leq K}\phi_n$. Since
$\phi_n^K\rightarrow u_0^K$ in $H^2$, then we see for any fixed $K$
there is $M_{\delta,K}$ such that
\[\sup_{t\in [-T,T]}\norm{S_T^\infty(\phi_m^K)-S_T^\infty(\phi_n^K)}_{H^s}\leq \delta/2, \quad \forall m,n\geq M_{\delta,K}.\]
On the other hand, we get from Proposition \ref{penergydiff} and
Lemma \ref{pFstoHs} that
\begin{eqnarray*}
\sup_{t\in[-T,T]}\norm{S_T^\infty(\phi_n)-S_T^\infty(\phi_n^K)}_{H^s}
&\les&\norm{S_T^\infty(\phi_n)-S_T^\infty(u_n^K)}_{F^s(T)}\\
&\les&\norm{\phi_n-\phi_n^K}_{H^s}+\norm{\phi_n^K}_{H^{2s}}\norm{\phi_n-\phi_n^K}_{L^2}\\
&\les& \norm{\phi-\phi_n}_{H^s}+\norm{\phi-\phi^K}_{H^s}.
\end{eqnarray*}
Thus we obtain that for any $\delta>0$ there are $K$ and $M_\delta$
such that
\[\sup_{t\in
[-T,T]}\norm{S_T^\infty(\phi_n)-S_T^\infty(\phi_n^K)}_{H^s}\leq
\delta/2, \quad \forall n\geq M_\delta.\] Therefore, we complete the
proof of part (b) of Theorem \ref{thmmain}.

\section{Energy Estimates}

In this section we prove the energy estimates, following the ideas
in \cite{IKT}. We introduce a new Littlewood-Paley decomposition
with smooth symbols. With
\[\chi_k(\xi)=\eta_0(\xi/2^k)-\eta_0(\xi/{2^{k-1}}), \quad k\in \Z,\]
Let $\widetilde{P}_k$ denote the operator on $L^2(\R)$ defined by
the Fourier multiplier $\chi_k(\xi)$. Assume that $u,v\in
C([-T,T];L^2)$ and
\begin{eqnarray}
\left \{
\begin{array}{l}
u_t+|\partial_x|^{1+\alpha}\partial_x u=v,\ (x,t)\in \R\times (-T,T);\\
u(x,0)=\phi(x).
\end{array}
\right.
\end{eqnarray}
Then we multiply by $u$ and integrate to conclude that
\begin{equation}\label{eq:L2esti}
\sup\limits_{|t_k|\leq T}\norm{u(t_k)}_{L^2}^2\leq
\norm{\phi}_{L^2}^2+\sup\limits_{|t_k|\leq
T}\aabs{\int_{\R\times[0,t_k]}u\cdot v dxdt}.
\end{equation}

\begin{lemma}\label{lem3linear}
(a) Assume $T\in (0,1]$, $k_1,k_2,k_3 \in \Z_+$ with
$\max(k_1,k_2,k_3)\geq 1$, and $u_i\in F_{k_i}(T), i=1,2,3$. Then if
$k_{min}\leq k_{max}-5$, we have
\begin{eqnarray}
\aabs{\int_{\R\times [0,T]}u_1u_2u_3 dxdt}\les 2^{-\alpha k_{max}}
\prod_{i=1}^3 \norm{u_i}_{F_{k_i}(T)}.\label{eq:tri1}
\end{eqnarray}

(b) Assume $T\in (0,1]$, $ k\in \Z_+$, $0\leq k_1\leq k-10$, $u\in
F_k(T)$, and $v\in F_{k_1}(T)$. Then
\begin{eqnarray}\label{eq:tri2}
\aabs{\int_{\R\times
[0,T]}\widetilde{P}_k(u)\widetilde{P}_k(\partial_x u \cdot
\widetilde{P}_{k_1}(v))dxdt}\les 2^{k_1-\alpha k_{max}}
\norm{v}_{F_{k_1}(T)}\sum_{|k'-k|\leq
10}\norm{\wt{P}_{k'}(u)}_{F_{k'}(T)}^2.
\end{eqnarray}
\end{lemma}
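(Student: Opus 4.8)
The plan is to reduce both estimates to the symmetric bound of Corollary \ref{cor42} (more precisely, Lemma \ref{lemsymes}) after passing to the frequency side and localizing in time. First I would treat part (a). By definition of $F_{k_i}(T)$, pick extensions $\wt u_i$ of $u_i$ with $\norm{\wt u_i}_{F_{k_i}}\les \norm{u_i}_{F_{k_i}(T)}$. On the interval $[0,T]$ replace the full integral by one against a partition of unity $\sum_n \gamma(2^{[(1-\alpha)k_{max}]}t-n)$ adapted to the shortest relevant time scale $2^{-[(1-\alpha)k_{max}]}$; since $k_{min}\le k_{max}-5$ the number of nonzero terms on $[0,T]$ is $\les 2^{[(1-\alpha)k_{max}]}T+1\les 2^{[(1-\alpha)k_{max}]}$. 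On each such short interval we may multiply each $\wt u_i$ by the corresponding smooth cutoff, apply \eqref{eq:pXk3}, and pass to Fourier variables: the trilinear integral becomes $\int_{\R^4} \ft(u_1)\ft(u_2)\ft(u_3)$ on the hyperplane $\xi_1+\xi_2+\xi_3=0$, $\tau_1+\tau_2+\tau_3=0$, i.e. essentially a quantity of the form $J(f_1,f_2,f_3)$ after the usual substitution $\tau_i\mapsto \tau_i-\omega(\xi_i)$ that turns $\tau_1+\tau_2+\tau_3=0$ into the constraint involving the resonance function $\Omega$. Decomposing each factor dyadically in modulation $L_i=2^{j_i}$ (with $j_i\ge[(1-\alpha)k_{max}]$ after the time localization, by the uncertainty principle) and summing, Lemma \ref{lemsymes}(b) gives a gain of $2^{-(1+\alpha)k_{max}/2}$ (or $2^{-\alpha k_{max}/2}2^{-k_{min}/2}$ in the borderline subcase) together with $2^{(j_{min}+j_{med})/2}$; combined with the two surviving $2^{j/2}$ weights from the $X_{k_i}$ norms and summed over the $\les 2^{[(1-\alpha)k_{max}]}$ time pieces, this should produce exactly $2^{-\alpha k_{max}}$, since $2^{(1-\alpha)k_{max}}\cdot 2^{-(1+\alpha)k_{max}/2}\cdot 2^{-j_3/2}$ with $j_3\ge (1-\alpha)k_{max}$ sums to $2^{-\alpha k_{max}}$; the borderline subcase works out because there we have the extra $2^{-k_{min}/2}$ with $k_{min}$ large compared to... one checks the arithmetic directly.

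For part (b) the structure is the same but one must exploit the commutator/derivative structure. The point is that $\wt P_k(u)\wt P_k(\partial_x u\cdot \wt P_{k_1}v)$ really behaves like $\partial_x$ acting on a product of two frequency-$2^k$ pieces times a low-frequency factor $\wt P_{k_1}v$; the derivative lands on a frequency-$2^k$ factor, costing $2^k$, but there is a gain because after symmetrization in the two high factors only the difference of the dispersion symbols survives. Concretely, I would write $\wt P_k(u)\wt P_k(\partial_x u\cdot \wt P_{k_1}v)$, move the outer $\wt P_k$ onto the other factor by self-adjointness, and symmetrize in the two factors carrying frequency $\sim 2^k$; the symmetrized multiplier is $\frac12(\xi-\xi')\,\wh{\wt P_{k_1}v}$ where $\xi,\xi'$ are the two high frequencies with $|\xi+\xi'|\sim 2^{k_1}$ forced by the support of $\wt P_{k_1}v$, hence $|\xi-\xi'|\les 2^{k_1}$ after accounting for... actually $|\xi-\xi'|$ can be as large as $2^k$, so the gain must instead come from the modulation: by Lemma \ref{resoest} the resonance in a $high\times high\times low$ interaction is $\sim 2^{(1+\alpha)k}2^{k_1}$, so some $j_i\gtrsim (1+\alpha)k+k_1$, which after the short-time localization and the dyadic modulation sum against Corollary \ref{cor42}(a) (the generic bound $2^{j_{min}/2}2^{k_{min}/2}$ with $k_{min}=k_1$) yields $2^{k}\cdot 2^{k_1/2}\cdot 2^{(1-\alpha)k}\cdot 2^{-j_{max}/2}\cdot 2^{j_{min}/2}$, and inserting $j_{max}\gtrsim (1+\alpha)k+k_1$ collapses this to $2^{k_1-\alpha k}$, as claimed. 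The sum over $|k'-k|\le 10$ on the right accommodates the fact that $\wt P_k$ has a slightly wider support than $P_k$, so each of the two high factors may be split into $\wt P_{k'}(u)$, $|k'-k|\le 10$; Cauchy–Schwarz in the two high factors then gives the square $\sum_{|k'-k|\le 10}\norm{\wt P_{k'}(u)}_{F_{k'}(T)}^2$.

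The main obstacle I anticipate is part (b): keeping track of exactly where the $2^{k_1}$ (and not a worse power of $2^k$) on the right-hand side comes from, i.e. verifying that the derivative loss $2^k$ is genuinely compensated by the forced high modulation $2^{(1+\alpha)k+k_1}$ after one correctly accounts for the time-localization factor $2^{(1-\alpha)k}$ coming from the number of short intervals and the retarded-parametrix weight $(\tau-\omega(\xi)+i2^{[(1-\alpha)k]})^{-1}$ hidden in the $N_k$ norm. The bookkeeping is delicate because the admissible range of $j_3$ (the output modulation) interacts with both the $2^{(1-\alpha)k}$ factor and the resonance lower bound; I would organize it exactly as in the proofs of Propositions \ref{phl} and \ref{phhl} above, isolating the subcase $j_3=j_{max}$ (output carries the large modulation) from $j_3\ne j_{max}$, and in each subcase applying the appropriate clause of Corollary \ref{cor42} together with Remark \ref{symest} to handle the $\eta_{\le j}$ tail. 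Part (a) is comparatively routine given Lemma \ref{lemsymes}(b); the only care needed is checking that one is never in the excluded configuration $(k_i,j_i)=(k_{min},j_{max})$ except in a subcase that is still favorable because $k_{min}$ is not the smallest of all three frequencies in the relevant sense — but since $k_{min}\le k_{max}-5$ and $N_{max}\sim N_{med}$, the extra $2^{-k_{min}/2}$ there is harmless.
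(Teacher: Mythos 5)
Your architecture for part (a) --- extend, localize time to scale $2^{-[(1-\alpha)k_{max}]}$, pay $2^{[(1-\alpha)k_{max}]}$ for the number of intervals, and win it back from Lemma \ref{lemsymes} plus the lower bound $j_i\ge [(1-\alpha)k_{max}]$ --- is the paper's argument, and your arithmetic in the main case is correct. But the ``borderline subcase'' is not harmless in the way you assert. When the low-frequency factor carries the maximal modulation, the second inequality of Lemma \ref{lemsymes}(b) gives $2^{-\alpha k_{max}/2}2^{-k_{min}/2}$ in place of $2^{-(1+\alpha)k_{max}/2}$, which is \emph{worse} by $2^{(k_{max}-k_{min})/2}$; running your computation with it (even using the resonance bound $j_{max}\gtrsim(1+\alpha)k_{max}+k_{min}$ from Lemma \ref{resoest}) produces $2^{(1/2-2\alpha)k_{max}-k_{min}}$, which exceeds the target $2^{-\alpha k_{max}}$ whenever $k_{min}<(1/2-\alpha)k_{max}$. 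The subcase does close, but only by switching to the crude bound of Lemma \ref{lemsymes}(a), $J\les 2^{j_{min}/2}2^{k_{min}/2}\prod\norm{f_{k_i,j_i}}_{L^2}$, and spending \emph{both} remaining modulations, $j_{med}\ge[(1-\alpha)k_{max}]$ and $j_{max}\gtrsim (1+\alpha)k_{max}+k_{min}$, which gives exactly $2^{-\alpha k_{max}}$. Separately, you cannot apply \eqref{eq:pXk3} on the $O(1)$ boundary intervals where $\gamma(2^{[(1-\alpha)k_{max}]}t-n)1_{[0,T]}(t)$ is not smooth; there one only retains the $\sup_j$ bound \eqref{eq:Xkrest} rather than the $\ell^1_j$ sum, and these terms need the separate treatment the paper gives them.

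Part (b) is where the proposal genuinely fails. The factor $2^{k_1}$ in \eqref{eq:tri2} cannot come from the resonance: with Lemma \ref{lemsymes}(a) or (b), $j_{med}\ge[(1-\alpha)k]$ and $j_{max}\gtrsim(1+\alpha)k+k_1$, the scheme of part (a) applied to $2^{(1-\alpha)k}\cdot 2^{k}\cdot J$ yields at best $2^{(1-\alpha)k}$ --- precisely the uncompensated derivative loss in high$\times$low interactions that makes the energy method necessary --- and your displayed product $2^{k}\cdot2^{k_1/2}\cdot2^{(1-\alpha)k}\cdot2^{-j_{max}/2}\cdot2^{j_{min}/2}$ evaluates, at $j_{max}=(1+\alpha)k+k_1$, to $2^{\frac{3}{2}(1-\alpha)k}2^{j_{min}/2}$ up to the $k_1$'s, not to $2^{k_1-\alpha k}$. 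The route you abandoned is the correct one, and you abandoned it because of an algebra slip: symmetrizing $\partial_x$ over the two frequency-$2^k$ factors produces the symbol $\tfrac12(\xi+\xi')$, not $\tfrac12(\xi-\xi')$, and $\xi+\xi'$ equals minus the low frequency, so $|\xi+\xi'|\les 2^{k_1}$ on the support of $\ft(\wt{P}_{k_1}v)$ --- that is where $2^{k_1}$ comes from. Concretely the paper writes the left side of \eqref{eq:tri2} as $\int \wt{P}_k(u)\,\wt{P}_k(\partial_x u)\,\wt{P}_{k_1}(v)$ plus the commutator term $\int \wt{P}_k(u)\,[\wt{P}_k,\wt{P}_{k_1}(v)](\partial_x u)$, integrates by parts in the first so that $\partial_x$ lands on $\wt{P}_{k_1}(v)$ (cost $2^{k_1}$, then apply \eqref{eq:tri1}), and gains $2^{k_1-k}$ from the commutator against the $2^{k}$ of $\partial_x$ in the second. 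Without exploiting this cancellation the claimed bound is not reachable by the multilinear estimates alone.
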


\begin{proof}
For part (a), from symmetry we may assume $k_1\leq k_2\leq k_3$. In
order for the integral to be nontrivial we must also have
$|k_2-k_3|\leq 4$. We fix extension $\wt{u}_i \in F_{k_i}$ such that
$\norm{\wt{u}_i}_{F_{k_i}}\leq 2\norm{u_i}_{F_{k_i}(T)}$, $i=1,2,3$.
Let $\gamma:\R \rightarrow [0,1]$ denote a smooth function supported
in $[-1,1]$ with the property that
\[\sum_{n\in \Z} \gamma^3(x-n)\equiv 1, \quad x\in \R.\]
The left-hand side of \eqref{eq:tri1} is dominated by
\begin{eqnarray}\label{eq:3linear1}
&&C\sum_{|n|\leq C2^{[(1-\alpha)k_3]}}\bigg|\int_{\R\times \R} \big(
\gamma(2^{[(1-\alpha)k_3]}t-n)1_{[0,T]}(t)\wt{u}_1\big)\nonumber\\
&& \qquad \times \big(\gamma(2^{[(1-\alpha)k_3]}t-n)\wt{u}_2
\big)\cdot \big(\gamma(2^{[(1-\alpha)k_3]}t-n)\wt{u}_3\big)
dxdt\bigg|
\end{eqnarray}
We observe first that
\[|A|=|\{n: \gamma(2^{[(1-\alpha)k_3]}t-n)1_{[0,T]}(t) \mbox{ nonzero and } \ne \gamma(2^{[(1-\alpha)k_3]}t-n)\}|\leq 4.\]

We assume first that $k_1\leq k_3-5$. For the summation of $n\in
A^c$ on the left-hand side of \eqref{eq:3linear1}, as was explained
in the proof of Proposition \ref{phl}, for \eqref{eq:tri1} it
suffices to prove that if $f_{k_i,j_i}$ are $L^2$ functions
supported in ${D}_{k_i,\leq j_i}$ for $i=1,2,3$ then
\begin{eqnarray}\label{eq:3linearAc}
2^{(1-\alpha)k_3}\sum_{j_1,j_2,j_3\geq
[(1-\alpha)k_3]}|J(f_{k_1,j_1},f_{k_2,j_2},f_{k_3,j_3})|\les
2^{-\alpha k_{max}}\sum_{j_i\geq 0} \prod_{i=1}^3
2^{j_i/2}\norm{f_{k_i,j_i}}_{2}.
\end{eqnarray}
Clearly we may assume $\max(k_1,k_2,k_3)\geq 10$, otherwise we can
get \eqref{eq:3linearAc} by using Lemma \ref{lemsymes} (a).  We get
from Lemma \ref{lemsymes} (b) that the left-hand side of
\eqref{eq:3linearAc} is bounded by
\begin{eqnarray}\label{eq:3linear3}
&&2^{(1-\alpha)k_3/2}\sum_{j_1,j_2,j_3\geq
[(1-\alpha)k_3]}2^{(j_{1}+j_{2}+j_3)/2}2^{-(1+\alpha)k_3/2}\prod_{i=1}^3\norm{f_{k_i,j_i}}_{2}\nonumber\\
&&\les 2^{-\alpha k_{max}} \sum_{j_i\geq 0} \prod_{i=1}^3
2^{j_i/2}\norm{f_{k_i,j_i}}_{2},
\end{eqnarray}
which is \eqref{eq:3linearAc} as desired.

For the summation of $n\in A$, we observe that if $I\subset \R$ is
an interval, $k\in \Z_+$, $f_k\in X_k$, and $f_k^I=\ft(1_I(t)\cdot
\ft^{-1}(f_k))$ then
\begin{eqnarray}\label{eq:Xkrest}
\sup_{j\in \Z_+}2^{j/2}\norm{\eta_j(\tau-\omega(\xi))\cdot
f_k^I}_{L^2}\les \norm{f_k}_{X_k}.
\end{eqnarray}
Indeed, to prove \eqref{eq:Xkrest} it suffices to prove for any
$j_1\geq 0$ and
$f_{k,j_1}=f_k(\xi,\tau)\eta_{j_1}(\tau-\omega(\xi))$ then
\begin{eqnarray}\label{eq:Xkrest2}
\sup_{j\in \Z_+}2^{j/2}\norm{\eta_j(\tau-\omega(\xi))\cdot
f_{k,j_1}^I}_{L^2}\les 2^{j_1/2}\norm{f_{k,j_1}}_{L_2}.
\end{eqnarray}
If $j\leq j_1+20$, then \eqref{eq:Xkrest2} follows from Plancherel's
equality. If $j\geq j_1+20$ then from
\[2^{j/2}\eta_j(\tau-\omega(\xi))|f_{k,j_1}^I(\xi,\tau)|\les  2^{j/2}\eta_j(\tau-\omega(\xi))\int |f_{k,j_1}(\xi,\tau)||\tau-\tau'|^{-1}d\tau'\]
we get \eqref{eq:Xkrest2} from \eqref{eq:pXk2} since
$|\tau-\tau'|\sim 2^j$. For the summation of $n\in A$ on the
left-hand side of \eqref{eq:3linear1}, clearly we may assume
$j_1\leq 10k_3$. Then as before we can get \eqref{eq:tri1} due to
$\alpha<1$.

For part(b), we denote the commutator of $T_1,T_2$ by
$[T_1,T_2]=T_1T_2-T_2T_1$. Then the left-hand side of
\eqref{eq:tri2} is dominated by
\begin{eqnarray}\label{eq:trib1}
\aabs{\int_{\R\times [0,T]} \wt{P}_k(u) \wt{P}_k(\partial_x
u)\wt{P}_{k_1}(v)dxdt}+\aabs{\int_{\R\times [0,T]} \wt{P}_k(u)
[\wt{P}_k,\wt{P}_{k_1}(v)](\partial_x u)dxdt}.
\end{eqnarray}
For the first term in \eqref{eq:trib1} we integrate by part and then
use \eqref{eq:tri1}. For the second term it follows from
\eqref{eq:tri1} and the similar argument in the proof of Lemma 6.1
in \cite{IKT}. We omit the details.
\end{proof}

\begin{proposition}\label{penergy}
Assume that $T\in (0,1]$ and $u\in C([-T,T]:H^\infty)$ is a solution
to Eq. \eqref{eq:dgBO} on $\R\times(-T,T)$. Then for $s\geq
1-\alpha$ we have
\begin{eqnarray}\label{eq:energyeq0}
\norm{u}_{E^s(T)}^2\les
\norm{u_0}_{H^s}^2+\norm{u}_{F^{1-\alpha}(T)}\norm{u}_{F^{s}(T)}^2.
\end{eqnarray}
\end{proposition}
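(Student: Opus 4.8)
The plan is to run a frequency-localized $L^2$ energy estimate in the spirit of \cite{IKT}. First I would apply $\widetilde{P}_k$ to \eqref{eq:dgBO}, write the nonlinearity as $uu_x=\tfrac12\partial_x(u^2)$, multiply the resulting equation by $\widetilde{P}_k u$ and integrate over $\R\times[0,t_k]$. Since $|\partial_x|^{1+\alpha}\partial_x$ is skew-adjoint on $L^2$ the dispersive term drops out, and \eqref{eq:L2esti} applied to $\widetilde{P}_k u$ yields
\[
\sup_{|t_k|\leq T}\norm{\widetilde{P}_k u(t_k)}_{L^2}^2\les \norm{\widetilde{P}_k u_0}_{L^2}^2+\sup_{|t_k|\leq T}\aabs{\int_{\R\times[0,t_k]}\widetilde{P}_k u\cdot \widetilde{P}_k\partial_x(u^2)\,dxdt}.
\]
Comparing $P_k$ with $\widetilde{P}_k$ (their symbols overlap only for $|k'-k|\leq 2$), summing in $k$ with weights $2^{2sk}$, and using the trivial bound $\norm{P_{\leq 0}u(0)}_{L^2}\leq\norm{u_0}_{H^s}$ together with $\sum_k 2^{2sk}\norm{\widetilde{P}_k u_0}_{L^2}^2\sim\norm{u_0}_{H^s}^2$, the estimate \eqref{eq:energyeq0} reduces to
\[
\sum_{k\geq 0}2^{2sk}\sup_{|t_k|\leq T}\aabs{\int_{\R\times[0,t_k]}\widetilde{P}_k u\cdot \widetilde{P}_k\partial_x(u^2)\,dxdt}\les \norm{u}_{F^{1-\alpha}(T)}\,\norm{u}_{F^{s}(T)}^2 .
\]

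Next I would decompose $\widetilde{P}_k\partial_x(u^2)=\sum_{k_1\leq k_2}\widetilde{P}_k\partial_x(\widetilde{P}_{k_1}u\cdot\widetilde{P}_{k_2}u)$, which by frequency support vanishes unless the two largest of $k,k_1,k_2$ agree up to $O(1)$, and split the admissible triples into three regimes. (i) When $k_{min}\leq k_{max}-5$ and, after writing $\partial_x(fg)=f_xg+fg_x$, the surviving derivative sits on a low-frequency input or on a low-frequency output: then the cubic integral is bounded directly by Lemma \ref{lem3linear}(a), whose gain $2^{-\alpha k_{max}}$ together with the small derivative $2^{k_{min}}$ (or the controlled $2^{k_{max}}$ times $2^{-\alpha k_{max}}$) suffices. (ii) When the output frequency and one input frequency are $\sim 2^{k}$ while the third is $\ll 2^k$ and the derivative is forced onto one of the two high factors: here no modulation is gained, so one must instead use the commutator identity behind Lemma \ref{lem3linear}(b), which replaces the dangerous $2^{k}$ by the harmless $2^{k_1}$ coming off the low-frequency factor. (iii) When $k\sim k_1\sim k_2$ (the high-high-high interaction): Lemma \ref{resoest} forces $|\Omega(\xi_1,\xi_2)|\sim 2^{(2+\alpha)k}$, hence a large modulation, and one estimates exactly as in Proposition \ref{phhh}, via Corollary \ref{cor42}(a) after localizing in time to intervals of length $2^{-[(1-\alpha)k]}$; the $2^{-j_3/2}$ weight in $N_k$ then more than absorbs the derivative $2^k$.

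In all three regimes the sharp time cutoff $1_{[0,t_k]}$ has to be reconciled with the frequency-dependent short-time $X^{s,b}$ structure. For regimes (i)--(ii) this is already built into the proof of Lemma \ref{lem3linear}, but for regime (iii) I would repeat the device used there: write $1=\sum_n\gamma(2^{[(1-\alpha)k]}t-n)^3$, estimate each of the $\sim 2^{[(1-\alpha)k]}$ pieces by the dyadic trilinear bound of Corollary \ref{cor42}(a), and treat separately the $O(1)$ values of $n$ for which the window straddles an endpoint of $[0,t_k]$, using that truncation to an interval is bounded on $X_k$ at the cost of replacing the $l^1$ modulation sum by an $l^\infty$ one, cf. \eqref{eq:Xkrest} — a loss that the room from $\alpha<1$ and the large resonance can afford. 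I expect this reconciliation, together with carrying out the high-high-high estimate (which is not covered by Lemma \ref{lem3linear}), to be the main technical obstacle; the rest is bookkeeping.

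Finally I would perform the Littlewood--Paley summation. In each regime the dyadic cubic integral is bounded by $2^{2sk}$ times a product of three $F_{k_i}$ norms times an explicit power of $2$; assigning one factor of frequency $\sim 2^{k_{max}}$ to $F^{1-\alpha}$ via $2^{(1-\alpha)k}\norm{\widetilde{P}_k u}_{F_k}\leq\norm{u}_{F^{1-\alpha}}$ (equivalently pulling out $\sup_{k\geq1}\norm{\widetilde{P}_k u}_{F_k}\les\norm{u}_{F^{1-\alpha}}$ in the high-high-high case) and the remaining two factors to $F^{s}$, one checks that the exponents match and the sums converge for $s\geq 1-\alpha$; when $\alpha>0$ the dyadic summations are immediate, while the borderline case $\alpha=0$ uses the refined summation from \cite{IKT}.
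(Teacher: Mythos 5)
Your proposal is correct and follows essentially the same route as the paper: the frequency-localized energy identity \eqref{eq:L2esti}, a paraproduct splitting of $\wt{P}_k(uu_x)$, Lemma \ref{lem3linear}(a) for the unbalanced terms in which the derivative is under control, the commutator estimate of Lemma \ref{lem3linear}(b) for the high--low term with the derivative on the high-frequency factor, and the $\gamma$-partition together with \eqref{eq:Xkrest} to reconcile the sharp cutoff $1_{[0,t_k]}$ with the short-time $X_k$ structure. Your explicit treatment of the balanced case $k\sim k_1\sim k_2$ via Lemma \ref{resoest} and Corollary \ref{cor42}(a) is in fact more careful than the paper's own argument, which formally invokes \eqref{eq:tri1} in a range that includes triples with $k_{min}>k_{max}-5$, outside the stated hypothesis of that lemma.
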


\begin{proof}
From definition we have
\begin{eqnarray}
\norm{u}_{E^s(T)}^2-\norm{P_{\leq 0}(u_0)}_{L^2}^2\les \sum_{k\geq
1}\sup_{t_k\in [-T,T]}2^{2sk}\norm{\wt{P}_k(u(t_k))}_{L^2}^2.
\end{eqnarray}
Then we can get from \eqref{eq:L2esti} that
\begin{eqnarray}\label{eq:energyeq}
2^{2sk}\norm{\wt{P}_k(u(t_k))}_{L^2}^2-2^{2sk}\norm{\wt{P}_k(u_0)}_{L^2}^2\les
2^{2sk}\left|\int_{\R\times [0,t_k]}\wt{P}_k(u)\wt{P}_k(u\cdot
\partial_x u)dxdt\right|.
\end{eqnarray}
It is easy to see that the right-hand side of \eqref{eq:energyeq} is
dominated by
\begin{eqnarray}\label{eq:trigoal}
&&C2^{2sk}\sum_{k_1\leq k-10}\left|\int_{\R\times
[0,t_k]}\wt{P}_k(u)\wt{P}_k(\wt{P}_{k_1}u\cdot\partial_x u)dxdt\right|\nonumber\\
&&+C2^{2sk}\sum_{k_1\geq k-9,k_2\in \Z_+}\left|\int_{\R\times
[0,t_k]}\wt{P}_k^2(u)\wt{P}_{k_1}(u)\cdot\partial_x \wt{P}_{k_2}(
u)dxdt\right|.
\end{eqnarray}
For the first term in \eqref{eq:trigoal},  using \eqref{eq:tri2}
then we get that it is bounded by
\begin{eqnarray*}
&&C2^{2sk}\sum_{k_1\leq k-10}2^{k_1-\alpha k}
\norm{u}_{F_{k_1}(T)}\sum_{|k'-k|\leq
10}\norm{\wt{P}_{k'}(u)}_{F_{k'}(T)}^2\\
&&\les \norm{u}_{F^{1-\alpha}(T)}2^{2sk}\sum_{|k'-k|\leq
10}\norm{u}_{F_{k'}(T)}^2
\end{eqnarray*}
which implies that the summation of the first term is bounded by
$\norm{u}_{F^{1-\alpha}(T)}\norm{u}_{F^{s}(T)}^2$ as desired.

For the second term in \eqref{eq:trigoal}, using \eqref{eq:tri1} we
get that it is bounded by
\begin{eqnarray*}
&&C2^{2sk}\sum_{|k_1-k|\leq 10,k_2\leq k+10}2^{k_2-\alpha
k}\norm{\wt{P}_k(u)}_{F_k(T)}\norm{\wt{P}_{k_1}(u)}_{F_{k_1}(T)}\norm{\wt{P}_{k_2}(u)}_{F_{k_2}(T)}\\
&&+C2^{2sk}\sum_{|k_1-k_2|\leq 10,k_1\geq k+10}2^{k_2-\alpha
k_2}\norm{\wt{P}_k(u)}_{F_k(T)}\norm{\wt{P}_{k_1}(u)}_{F_{k_1}(T)}\norm{\wt{P}_{k_2}(u)}_{F_{k_2}(T)}\\
&&\les 2^{2sk}\norm{u}_{F^{1-\alpha}(T)}\sum_{|k'-k|\leq
10}\norm{u}_{F_{k'}(T)}^2.
\end{eqnarray*}
Therefore, we complete the proof of the proposition.
\end{proof}

\begin{proposition}\label{penergydiff}

Let $0\leq \alpha <1$. Assume $\sigma>1-\alpha$. Let $u_1,u_2 \in
F^{\sigma}(1)$ be solutions to \eqref{eq:dgBO} with initial data
$\phi_1,\phi_2 \in H^\infty$ satisfying
\[\norm{\phi_1}_{H^{\sigma}}+\norm{\phi_2}_{H^{\sigma}}\leq \epsilon_0\ll 1.\]
Then we have
\begin{eqnarray}\label{eq:L2conti}
\norm{u_1-u_2}_{F^0(1)}\les \norm{\phi_1-\phi_2}_{L^2},
\end{eqnarray}
and
\begin{eqnarray}\label{eq:Hsconti}
\norm{u_1-u_2}_{F^{\sigma}(1)}\les
\norm{\phi_1-\phi_2}_{H^{\sigma}}+\norm{\phi_1}_{H^{2\sigma}}\norm{\phi_1-\phi_2}_{L^2}.
\end{eqnarray}
\end{proposition}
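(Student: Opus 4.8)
The plan is to run the difference equation for $w := u_1 - u_2$ through exactly the same linear/bilinear/energy scheme \eqref{eq:scheme} that was used in the proof of Theorem \ref{thmmain}(a), but now keeping track of the fact that the nonlinear term in the difference equation is bilinear in $(w, u_1+u_2)$ rather than quadratic in a single function. Subtracting the two copies of \eqref{eq:dgBO} one finds
\[
\partial_t w + |\partial_x|^{1+\alpha}\partial_x w + \tfrac12 \partial_x\bigl(w\,(u_1+u_2)\bigr) = 0, \qquad w(0) = \phi_1-\phi_2,
\]
so Proposition \ref{plinear} gives $\norm{w}_{F^\sigma(T)} \les \norm{w}_{E^\sigma(T)} + \norm{\partial_x(w(u_1+u_2))}_{N^\sigma(T)}$, and the bilinear estimate Lemma \ref{pbilinear}(a) gives, for any $0\le s'\le\sigma$,
\[
\norm{\partial_x\bigl(w(u_1+u_2)\bigr)}_{N^{s'}(T)} \les \norm{w}_{F^{s'}(T)}\,\bigl(\norm{u_1}_{F^{1-\alpha}(T)}+\norm{u_2}_{F^{1-\alpha}(T)}\bigr) + \norm{w}_{F^{1-\alpha}(T)}\,\bigl(\norm{u_1}_{F^{s'}(T)}+\norm{u_2}_{F^{s'}(T)}\bigr).
\]
By Theorem \ref{thmmain}(a) applied to $u_1, u_2$ (and the smallness hypothesis on the data, after rescaling as in \eqref{eq:smalldata}) we have $\norm{u_i}_{F^\sigma(1)}\les\norm{\phi_i}_{H^\sigma}\les\epsilon_0$ and in particular $\norm{u_i}_{F^{1-\alpha}(1)}\ll 1$, so these bilinear contributions are absorbed on the left.

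The one genuinely new ingredient is the energy estimate for the difference equation, which is the analogue of Proposition \ref{penergy}. Writing the $L^2$ energy identity \eqref{eq:L2esti} for $w$ frequency-localized at $2^k$, the cubic term to control is $\int \wt P_k(w)\,\wt P_k\bigl(\partial_x(w\,(u_1+u_2))\bigr)$, which one splits into the high-low piece (where one factor of $w$ or of $u_i$ has much smaller frequency) and the high-high piece, exactly as in \eqref{eq:trigoal}. The trilinear estimates of Lemma \ref{lem3linear}, being multilinear, apply verbatim: the high-low piece where $u_i$ supplies the low frequency is handled by \eqref{eq:tri2} with $v = \wt P_{k_1}(u_i)$ and the two high factors being copies of $\wt P_{k}(w)$, yielding a bound $2^{k_1-\alpha k}\norm{u_i}_{F_{k_1}}\norm{\wt P_k(w)}_{F_k}^2$; summing in $k_1$ gives a factor $\norm{u_i}_{F^{1-\alpha}}$, and then summing in $k$ with weight $2^{2\sigma k}$ gives $\sum_i\norm{u_i}_{F^{1-\alpha}(1)}\,\norm{w}_{F^\sigma(1)}^2$. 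The remaining high-low and high-high pieces are treated by \eqref{eq:tri1} just as in Proposition \ref{penergy}. The conclusion is the two energy inequalities
\[
\norm{w}_{E^0(1)}^2 \les \norm{\phi_1-\phi_2}_{L^2}^2 + \bigl(\textstyle\sum_i\norm{u_i}_{F^{1-\alpha}(1)}\bigr)\,\norm{w}_{F^0(1)}^2,
\]
\[
\norm{w}_{E^\sigma(1)}^2 \les \norm{\phi_1-\phi_2}_{H^\sigma}^2 + \bigl(\textstyle\sum_i\norm{u_i}_{F^\sigma(1)}\bigr)\,\norm{w}_{F^\sigma(1)}\,\norm{w}_{F^0(1)} + \bigl(\textstyle\sum_i\norm{u_i}_{F^{1-\alpha}(1)}\bigr)\,\norm{w}_{F^\sigma(1)}^2,
\]
where in the $H^\sigma$ version the term with $\sum_i\norm{u_i}_{F^\sigma}$ comes from the piece of \eqref{eq:trigoal} in which the high derivative falls on a $u_i$ factor carrying the top frequency $2^k$ — this is the step that forces $\norm{\phi_1}_{H^{2\sigma}}$ (i.e. an extra factor $\norm{u_1}_{F^\sigma}$ beyond the two $w$ factors at low regularity) to appear in \eqref{eq:Hsconti}.

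Assembling: feed the linear estimate, the bilinear estimate and the $L^2$ energy estimate into the continuity/bootstrap argument already used for Theorem \ref{thmmain}(a) (continuity of $T'\mapsto \norm{w}_{E^0(T')}+\norm{\partial_x(w(u_1+u_2))}_{N^0(T')}$ on $[0,1]$, smallness of the $\norm{u_i}_{F^{1-\alpha}(1)}$ as the effective coupling constant) to get \eqref{eq:L2conti}: $\norm{w}_{F^0(1)}\les\norm{\phi_1-\phi_2}_{L^2}$. Then, with $\norm{w}_{F^0(1)}$ now known to be small and controlled by $\norm{\phi_1-\phi_2}_{L^2}$, plug it into the $H^\sigma$ versions, again absorbing the $\norm{u_i}_{F^{1-\alpha}(1)}\norm{w}_{F^\sigma(1)}^2$ term on the left and treating the mixed term by Young's inequality, to obtain \eqref{eq:Hsconti}. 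The main obstacle is the high-high energy term of \eqref{eq:trigoal} type where the output frequency $2^k$ is much smaller than the (comparable) frequencies of two of the three inputs: here the derivative is at high frequency and one must use the resonance gain in Lemma \ref{lem3linear}(a) together with the fact that two of the three factors are $w$ (so only $\norm{w}_{F^0}^2$, not $\norm{w}_{F^\sigma}^2$, is needed), which is precisely what keeps the loss in \eqref{eq:Hsconti} down to a single factor of $\norm{\phi_1}_{H^{2\sigma}}$; the requirement $\sigma>1-\alpha$ (rather than $\ge$) enters exactly as in Proposition \ref{penergy} to make the $k_1$-summation in the high-low term converge.
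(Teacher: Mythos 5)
Your treatment of \eqref{eq:L2conti} is essentially the paper's: write the difference equation for $v=u_2-u_1$, apply Proposition \ref{plinear} and Lemma \ref{pbilinear}(b), and close a direct $E^0$ energy estimate via Lemma \ref{lem3linear} using the smallness of $\norm{u_i}_{F^\sigma(1)}$. The gap is in \eqref{eq:Hsconti}. Your claimed $\sigma$-level energy inequality
\begin{equation*}
\norm{w}_{E^\sigma(1)}^2 \les \norm{\phi_1-\phi_2}_{H^\sigma}^2+\Bigl(\sum_i\norm{u_i}_{F^\sigma(1)}\Bigr)\norm{w}_{F^\sigma(1)}\norm{w}_{F^0(1)}+\Bigl(\sum_i\norm{u_i}_{F^{1-\alpha}(1)}\Bigr)\norm{w}_{F^\sigma(1)}^2
\end{equation*}
cannot be correct: since $\norm{u_i}_{F^\sigma(1)}\les\epsilon_0$, it would yield $\norm{w}_{F^\sigma(1)}\les\norm{\phi_1-\phi_2}_{H^\sigma}$, i.e.\ Lipschitz dependence in $H^\sigma$, contradicting the $C^2$ ill-posedness of \cite{MST}. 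Concretely, in the piece where the derivative and the output frequency $2^k$ both sit on $u_i$ while $w$ supplies the low frequency $2^{k_1}$, Lemma \ref{lem3linear}(a) gives the factor $2^{2\sigma k}\cdot 2^{(1-\alpha)k}\norm{\wt{P}_k(w)}_{F_k}\norm{\wt{P}_{k_1}(w)}_{F_{k_1}}\norm{\wt{P}_{k}(u_i)}_{F_{k}}$; putting $2^{\sigma k}$ on $w$ leaves $2^{(\sigma+1-\alpha)k}$ on $u_i$, so the coefficient is $\norm{u_i}_{F^{\sigma+1-\alpha}(1)}\les\norm{u_i}_{F^{2\sigma}(1)}$, not $\norm{u_i}_{F^{\sigma}(1)}$. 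Worse, for $i=2$ this produces $\norm{u_2}_{F^{2\sigma}(1)}$, which is not controlled by $\norm{\phi_1}_{H^{2\sigma}}$; in the Bona--Smith application $\phi_2$ is the rough datum, so this term destroys the estimate. Your closing remark locates the $H^{2\sigma}$ loss in the high-high term, but it actually arises from this high-low term where the difference carries the low frequency.

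The missing ingredient is the paper's renormalization. Set $U=P_{\geq-10}(\Lambda^\sigma v)$ and use the identity (valid precisely because $v=u_2-u_1$)
\begin{equation*}
\tfrac12(u_1+u_2)\Lambda^{\sigma}\partial_x v+\tfrac12 v\,\Lambda^{\sigma}\partial_x(u_1+u_2)=u_2\,\Lambda^{\sigma}\partial_x v+v\,\Lambda^{\sigma}\partial_x u_1 .
\end{equation*}
After peeling off the commutators $[\Lambda^\sigma,u_1+u_2]\partial_x v$ and $[\Lambda^\sigma,v]\partial_x(u_1+u_2)$, which gain a derivative and are handled by the commutator argument of Lemma \ref{lem3linear}(b), the only term carrying $\sigma$ extra derivatives on a solution is $v\,\Lambda^\sigma\partial_x u_1$, which involves $u_1$ alone and yields the single factor $\norm{u_1}_{F^{2\sigma}(1)}\norm{v}_{F^0(1)}\les\norm{\phi_1}_{H^{2\sigma}}\norm{\phi_1-\phi_2}_{L^2}$. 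The dangerous self-interaction $v\Lambda^\sigma\partial_x v$ is absorbed into the transport term $u_2\Lambda^\sigma\partial_x U$, which has the symmetric structure $\int\wt{P}_k(U)\wt{P}_k(u_2\,\partial_x U)$ required for the integration-by-parts bound \eqref{eq:tri2}. Without this step \eqref{eq:Hsconti} does not follow from the scheme you describe.
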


\begin{proof}
We prove first \eqref{eq:L2conti}. Since
$\norm{\phi_1}_{H^{\sigma}}+\norm{\phi_2}_{H^{\sigma}}\leq
\epsilon_0\ll 1$, then from the proof of Theorem \ref{thmmain} (a)
in the last section we know
\begin{eqnarray}\label{eq:Fssmall}
\norm{u_1}_{F^\sigma(1)}\ll 1,\quad \norm{u_2}_{F^\sigma(1)}\ll 1.
\end{eqnarray}
 Let $v=u_2-u_1$, then $v$ solves the equation
\begin{eqnarray}\label{eq:dgBOdiff}
\left\{
\begin{array}{l}
\partial_t v+|\partial_x|^{1+\alpha}\partial_x v=-\partial_x[v(u_1+u_2)/2];\\
v(0)=\phi=\phi_2-\phi_1.
\end{array}
\right.
\end{eqnarray}
Then from Proposition \ref{plinear} and Proposition \ref{pbilinear}
(b) we obtain
\begin{eqnarray}\label{eq:dgBOdiffes1}
\left\{
\begin{array}{l}
\norm{v}_{F^0(1)}\les \norm{v}_{E^0(1)}+\norm{\partial_x[v(u_1+u_2)/2]}_{N^0(1)};\\
\norm{\partial_x[v(u_1+u_2)/2]}_{N^0(1)}\les
\norm{v}_{F^0(1)}(\norm{u_1}_{F^{1-\alpha}(1)}+\norm{u_2}_{F^{1-\alpha}(1)}).
\end{array}
\right.
\end{eqnarray}

We now devote to derive an estimate on $\norm{v}_{E^0(1)}$. As in
the proof of Proposition \ref{penergy}, we get from
\eqref{eq:L2esti} that
\begin{eqnarray}\label{eq:energydiff1}
\norm{v}_{E^0(1)}^2-\norm{\phi}_{L^2}^2&\les&
\sum_{k\geq1}\left|\int_{\R\times
[0,t_k]}\wt{P}_k(v)\wt{P}_k(\partial_x £¨v£©\cdot
(u_1+u_2))dxdt\right|\nonumber\\
&& +\sum_{k\geq1}\left|\int_{\R\times
[0,t_k]}\wt{P}_k(v)\wt{P}_k(v\cdot
\partial_x (u_1+u_2))dxdt\right|.
\end{eqnarray}
For the first term on right-hand side of \eqref{eq:energydiff1},
using Lemma \ref{lem3linear} we can bound it by
\begin{eqnarray*}
&&C\sum_{k\geq 1}\sum_{k_1\leq k-10}\left|\int_{\R\times
[0,t_k]}\wt{P}_k(v)\wt{P}_k(\partial_x v\cdot\wt{P}_{k_1}(u_1+u_2))dxdt\right|\\
&&+C\sum_{k\geq 1}\sum_{k_1\geq k-9,k_2\in \Z_+}\left|\int_{\R\times
[0,t_k]}\wt{P}_k^2(v)\partial_x\wt{P}_{k_2}(v)\cdot \wt{P}_{k_1}(
u_1+u_2)dxdt\right|\\
&&\les\norm{v}_{F^0(1)}^2(\norm{u_1}_{F^{\sigma}(1)}+\norm{u_2}_{F^{\sigma}(1)}),
\end{eqnarray*}
The second term on right-hand side of \eqref{eq:energydiff1} is
dominated by
\begin{eqnarray*}
&&\sum_{k\geq1}\sum_{k_1,k_2\in \Z_+}\left|\int_{\R\times
[0,t_k]}\wt{P}_k^2(v)\wt{P}_{k_1}(v)\cdot
\partial_x \wt{P}_{k_2}(u_1+u_2)dxdt\right|\\
&&\les
\norm{v}_{F^0(1)}^2(\norm{u_1}_{F^{\sigma}(1)}+\norm{u_2}_{F^{\sigma}(1)}).
\end{eqnarray*}
Therefore, we obtain the following estimate
\begin{eqnarray}
\norm{v}_{E^0(1)}^2\les
\norm{\phi}_{L^2}^2+\norm{v}_{F^0(1)}^2(\norm{u_1}_{F^{\sigma}(1)}+\norm{u_2}_{F^{\sigma}(1)}),
\end{eqnarray}
which combined with \eqref{eq:dgBOdiffes1} implies
\eqref{eq:L2conti} in view of \eqref{eq:Fssmall}.

We prove now \eqref{eq:Hsconti}. From Proposition \ref{plinear} and
\ref{pbilinear} we obtain
\begin{eqnarray}\label{eq:dgBOdiffes2}
\left\{
\begin{array}{l}
\norm{v}_{F^{\sigma}(1)}\les \norm{v}_{E^{\sigma}(1)}+\norm{\partial_x[v(u_1+u_2)/2]}_{N^{\sigma}(1)};\\
\norm{\partial_x[v(u_1+u_2)/2]}_{N^{\sigma}(1)}\les
\norm{v}_{F^{\sigma}(1)}(\norm{u_1}_{F^{\sigma}(1)}+\norm{u_2}_{F^{\sigma}(1)}).
\end{array}
\right.
\end{eqnarray}
Since $\norm{P_{\leq 0}(v)}_{E^{\sigma}(1)}=\norm{P_{\leq
0}(\phi)}_{L^2}$, it follows from \eqref{eq:Fssmall} that
\begin{eqnarray}\label{eq:energyv}
\norm{v}_{F^{\sigma}(1)}\les \norm{P_{\geq
1}(v)}_{E^{\sigma}(1)}+\norm{\phi}_{H^{\sigma}}.
\end{eqnarray}
To bound $\norm{P_{\geq 1}(v)}_{E^{\sigma}(1)}$, we observe that
\[\norm{P_{\geq 1}(v)}_{E^{\sigma}(1)}=\norm{P_{\geq
1}(\Lambda^{\sigma}v)}_{E^0(1)},\] where $\Lambda^{\sigma}$ is the
Fourier multiplier operator with the symbol $|\xi|^{\sigma}$. Thus
we apply the operator $\Lambda^{\sigma}$ on both side of the
equation \eqref{eq:dgBOdiff} and get
\[\partial_t \Lambda^{\sigma}v+|\partial_x|^{1+\alpha}\partial_x \Lambda^{\sigma}v=-\Lambda^{\sigma}\partial_x[v(u_1+u_2)/2].\]
We rewrite the nonlinearity in the following way
\begin{eqnarray}\label{eq:dgBOdiffnonlinear}
\Lambda^{\sigma}\partial_x[v(u_1+u_2)/2]&=&\Lambda^{\sigma}[\partial_x
v(u_1+u_2)/2+v\partial_x(u_1+u_2)/2]\nonumber\\
&=&\rev{2}[\Lambda^{\sigma},(u_1+u_2)]\partial_x
v+\rev{2}[\Lambda^{\sigma},v]\partial_x(u_1+u_2)\nonumber\\
&&+\rev{2}(u_1+u_2)\Lambda^{\sigma}\partial_x
v+\rev{2}v\Lambda^{\sigma}\partial_x (u_1+u_2).
\end{eqnarray}
The right-hand side of \eqref{eq:dgBOdiffnonlinear} can be rewritten
as
\begin{eqnarray*}
\rev{2}[\Lambda^{\sigma},(u_1+u_2)]\partial_x
v+\rev{2}[\Lambda^{\sigma},v]\partial_x(u_1+u_2)+u_2\Lambda^{\sigma}\partial_x
v+v\Lambda^{\sigma}\partial_x u_1.
\end{eqnarray*}
We write the equation for $U={P}_{\geq -10}(\Lambda^{\sigma} v)$ in
the form
\begin{eqnarray}\label{eq:dgBOdiffU}
\left\{
\begin{array}{l}
\partial_t U+|\partial_x|^{1+\alpha}\partial_x U=P_{\geq -10}(-u_2\cdot\partial_x U)+P_{\geq -10}(G);\\
U(0)=P_{\geq -10}(\Lambda^{\sigma}\phi),
\end{array}
\right.
\end{eqnarray}
where
\begin{eqnarray*}
G&=&-P_{\geq -10}(u_2)\cdot \Lambda^{\sigma}\partial_x P_{\leq
-11}(v)-P_{\leq -11}(u_2)\cdot \Lambda^{\sigma}\partial_x P_{\leq
-11}(v)\\
&&-\rev{2}[\Lambda^{\sigma},(u_1+u_2)]\partial_x
v-\rev{2}[\Lambda^{\sigma},v]\partial_x(u_1+u_2)-v\cdot
\Lambda^{\sigma}\partial_x u_1.
\end{eqnarray*}

It follows from \eqref{eq:L2esti} and \eqref{eq:dgBOdiffU} that
\begin{eqnarray*}
\norm{U}_{E^0(1)}^2-\norm{\phi}_{H^{\sigma}}^2&\les& \sum_{k\geq 1}
\left|\int_{\R\times [0,t_k]}\wt{P}_k(U)\wt{P}_{k}(u_2\cdot
\partial_x U)dxdt\right|\\
&& +\sum_{k\geq 1} \left|\int_{\R\times [0,t_k]}\wt{P}_k^2(U)P_{\geq
-10}(u_2)\cdot \Lambda^{\sigma}\partial_x P_{\leq
-11}(v)dxdt\right|\\
&&+\sum_{k\geq 1} \left|\int_{\R\times
[0,t_k]}\wt{P}_k(U)[\Lambda^{\sigma},(u_1+u_2)]\partial_x
vdxdt\right|\\
&&+\sum_{k\geq 1} \left|\int_{\R\times
[0,t_k]}\wt{P}_k(U)[\Lambda^{\sigma},v]\partial_x(u_1+u_2)dxdt\right|\\
&&+\sum_{k\geq 1} \left|\int_{\R\times [0,t_k]}\wt{P}_k(U)v\cdot
\Lambda^{\sigma}\partial_x u_1dxdt\right|\\
&:=&I+II+III+IV+V.
\end{eqnarray*}
For the contribution of $I$ we can bound it as in
\eqref{eq:energydiff1} and then get that
\[I\les \norm{U}_{F^0(1)}^2\norm{u_2}_{F^{\sigma}(1)}. \]
For the contribution of $II$, since the derivatives fall on the low
frequency, then we can easily get
\[II\les \norm{U}_{F^0(1)}^2\norm{u_2}_{F^{\sigma}(1)}. \]
We consider now the contribution of $V$.
\begin{eqnarray*}
V&\les& \sum_{k\geq 1} \sum_{k_1,k_2\in \Z_+}\left|\int_{\R\times
[0,t_k]}\wt{P}_k(U)\cdot \wt{P}_{k_1}(v)\cdot
\Lambda^{\sigma}\partial_x \wt{P}_{k_2}(u_1)dxdt\right|\\
&\les& \sum_{k\geq 1}\sum_{|k-k_2|\leq 5, k_1 \leq k-10}
2^{k(\sigma+1-\alpha)}
\norm{\wt{P}_{k}(U)}_{F_k(1)}\norm{\wt{P}_{k_1}(v)}_{F_{k_1}(1)}\norm{\wt{P}_{k_2}(u_1)}_{F_{k_2}(1)}\\
&&+\sum_{k\geq 1}\sum_{k_1 \geq k-10}
2^{k_2(\sigma+1)}2^{-\alpha\max(k_1,k_2)}
\norm{\wt{P}_{k}(U)}_{F_k(1)}\norm{\wt{P}_{k_1}(v)}_{F_{k_1}(1)}\norm{\wt{P}_{k_2}(u_1)}_{F_{k_2}(1)}\\
&\les&
\norm{U}_{F^0(1)}\norm{v}_{F^0(1)}\norm{u_1}_{F^{2\sigma}(1)}+\norm{U}_{F^0(1)}^2\norm{u_1}_{F^{\sigma}(1)}.
\end{eqnarray*}
For the contribution of $III$, we obtain
\begin{eqnarray*}
III&\les&\sum_{k\geq 1}\sum_{k_1\leq k_2-10} \left|\int_{\R\times
[0,t_k]}\wt{P}_k(U)[\Lambda^{\sigma},\wt{P}_{k_1}(u_1+u_2)]\partial_x
\wt{P}_{k_2}(v)dxdt\right|\\
&&+ \sum_{k\geq 1}\sum_{k_1\geq k_2-9}\left|\int_{\R\times
[0,t_k]}\wt{P}_k(U)[\Lambda^{\sigma},\wt{P}_{k_1}(u_1+u_2)]\partial_x
\wt{P}_{k_2}(v)dxdt\right|\\
&:=&III_1+III_2.
\end{eqnarray*}
We note that in the term $III_2$, the component $(u_1+u_2)$ can
spare derivative, and thus we get
\begin{eqnarray*}
III_2&\les&\sum_{k\geq 1}\sum_{k_1\geq k_2-9}\left|\int_{\R\times
[0,t_k]}\Lambda^{\sigma}\wt{P}_k(U)\wt{P}_{k_1}(u_1+u_2)\partial_x
\wt{P}_{k_2}(v)dxdt\right|\\
&&+\sum_{k\geq 1}\sum_{k_1\geq k_2-9}\left|\int_{\R\times
[0,t_k]}\wt{P}_k(U)\wt{P}_{k_1}(u_1+u_2)\Lambda^{\sigma}\partial_x
\wt{P}_{k_2}(v)dxdt\right|\\
&\les&\sum_{k\geq 1}\sum_{k_1\geq k_2-9} 2^{-\alpha
k_1}2^{k\sigma}2^{k_2}\norm{\wt{P}_{k}(U)}_{F_{k}(1)}\norm{\wt{P}_{k_1}(u_1+u_2)}_{F_{k_1}(1)}\norm{\wt{P}_{k_2}(v)}_{F_{k_2}(1)}\\
&&+\sum_{k\geq 1}\sum_{k_1\geq k_2-9} 2^{-\alpha
k_1}2^{k_2(\sigma+1)}\norm{\wt{P}_{k}(U)}_{F_{k}(1)}\norm{\wt{P}_{k_1}(u_1+u_2)}_{F_{k_1}(1)}\norm{\wt{P}_{k_2}(v)}_{F_{k_2}(1)}\\
&\les&\norm{U}_{F^0(1)}^2(\norm{u_1}_{F^\sigma(1)}+\norm{u_2}_{F^\sigma(1)})
\end{eqnarray*}
For the contribution of $III_1$ we need to exploit the cancelation
of the commutator. By taking $\gamma$ and extending $U,u_1,u_2,v$ as
in the proof of Lemma \ref{lem3linear}, then we get
\begin{eqnarray*}
III_1&\les&\sum_{k\geq 1}\sum_{k_1\leq k_2-10}\sum_{|n|\leq
C2^{[(1-\alpha)k_3]}} \int_{\R\times \R}\big(
\gamma(2^{[(1-\alpha)k_3]}t-n)1_{[0,t_k]}(t)\wt{P}_k(U)\big)\\
&&\cdot
[\Lambda^{\sigma},\gamma(2^{[(1-\alpha)k_3]}t-n)\wt{P}_{k_1}(u_1+u_2)]\partial_x
\wt{P}_{k_2}(\gamma(2^{[(1-\alpha)k_3]}t-n)v)dxdt
\end{eqnarray*}
Let $f_k=\gamma(2^{[(1-\alpha)k_3]}t-n)\wt{P}_k(U)$,
$g_{k_1}=\gamma(2^{[(1-\alpha)k_3]}t-n)\wt{P}_{k_1}(u_1+u_2)$ and
$h_{k_2}=\wt{P}_{k_2}(\gamma(2^{[(1-\alpha)k_3]}t-n)v)$. It is easy
to see from $|k_2-k|\leq 3$ that
\[
|\ft([\Lambda^{\sigma},g_{k_1}]\partial_x h_{k_2})(\xi,\tau)|\les
\int_{\R\times \R}
|\widehat{g}_{k_1}(\xi-\xi',\tau-\tau')|2^{k_1}2^{\sigma
k_2}|\widehat{h}_{k_2}(\xi',\tau')|d\xi'd\tau'.
\]
Then using a similar argument in the proof of Lemma \ref{lem3linear}
we can get that
\begin{eqnarray*}
III_1&\les&\sum_{k\geq 1}\sum_{k_1\leq k_2-10} 2^{k_1}2^{\sigma k_2}2^{-\alpha k_2}\norm{\wt{P}_{k}(U)}_{F_{k}(1)}\norm{\wt{P}_{k_1}(u_1+u_2)}_{F_{k_1}(1)}\norm{\wt{P}_{k_2}(v)}_{F_{k_2}(1)}\\
&\les&\norm{U}_{F^0(1)}^2(\norm{u_1}_{F^\sigma(1)}+\norm{u_2}_{F^\sigma(1)}).
\end{eqnarray*}
The contribution of $IV$ is identical to the one of $III$ from
symmetry. Therefore, we have proved that
\begin{eqnarray*}
\norm{U}_{E^0(1)}^2&\les
&\norm{\phi}_{H^\sigma}^2+\norm{U}_{F^0(1)}^2(\norm{u_1}_{F^\sigma(1)}+\norm{u_2}_{F^\sigma(1)})\\
&&+\norm{U}_{F^0(1)}\norm{v}_{F^0(1)}\norm{u_1}_{F^{2\sigma}(1)}.
\end{eqnarray*}
By \eqref{eq:Fssmall}, Theorem \ref{thmmain} (a), \eqref{eq:L2conti}
and \eqref{eq:energyv} we get
\[\norm{U}_{E^0(1)}\les \norm{\phi_1-\phi_2}_{H^\sigma}+\norm{\phi_1-\phi_2}_{L^2}\norm{\phi_1}_{H^{2\sigma}},\]
which combined with \eqref{eq:energyv} completes the proof of the
proposition.
\end{proof}

\noindent{\bf Acknowledgment.} The author would like to thank
Professor Carlos E. Kenig for helpful suggestions. This work is
supported in part by RFDP of China No. 20060001010, the National
Science Foundation of China, grant 10571004; and the 973 Project
Foundation of China, grant 2006CB805902, and the Innovation Group
Foundation of NSFC, grant 10621061.


\begin{thebibliography}{99}

\bibitem{BonaSmith}{J. Bona, R. Smith, The initial-value problem for the Korteweg-de Vries equation, Philos. Trans. Roy. Soc. London Ser. A 278(1975), 555-601.}
\bibitem{Bour}{J. Bourgain, Fourier transform restriction phenomena for certain
lattice subsets and applications to nonlinear evolution equations I,
II. Geom. Funct. Anal., 3:107-156, 209-262, 1993.}
\bibitem{I-method}{J. Colliander, M. Keel, G. Staffilani, H. Takaoka, T. Tao,
Sharp global well-posedness for KdV and modified KdV on $\R$ and
$\T$. J. Amer. Math. Soc., 16(3);705-749, 2003.}
\bibitem{CCT}{M. Christ, J. Colliander, and T. Tao, Asymptotics, frequency
modulation and low regularity ill-posedness for canonical defocusing
equations, Amer. J. Math. 125 (2003), no. 6, 1235-1293.}
\bibitem{CKS}{J. Colliander, C. Kenig, G. Staffilani, Local well-posedness for dispersion-generalized Benjamin-Ono equations. Differential Integral Equations 16 (2003), no. 12, 1441--1472.}
\bibitem{Guo}{Z. Guo, Local well-posedness and a priori bounds for the modified Benjamin-Ono equation without using a gauge
transformation, arXiv:0807.3764v1}
\bibitem{GuoKdV}{Z. Guo,  Global Well-posedness of Korteweg-de Vries equation in $H^{-3/4}(\R)$
, to appear J. Math. Pure. Anal., arXiv:0810.3445v2}
\bibitem{GPW}{Z. Guo, L. Peng, B. Wang, Decay estimates for a class of wave equations, Journal of Functional Analysis, 254/6 (2008) 1642-1660.}
\bibitem{GW}{Z. Guo, B. Wang, Global well posedness and inviscid limit for the Korteweg-de
Vries-Burgers equation, arXiv:0803.2450v2}
\bibitem{Herr}{S. Herr, An improved bilinear estimate for Benjamin-Ono type equations, Preprint arXiv:math/0509218v1.}

\bibitem{Herr2}{S. Herr, Well-posedness for equations of Benjamin-Ono type. Illinois J. Math. 51 (2007), no. 3, 951--976.}
\bibitem{IK}{A. Ionescu, C. Kenig, Global well-posedness of
the Benjamin-Ono equation in low-regularity spaces, J. Amer. Math.
Soc., 20 (2007), no. 3, 753-798.}
\bibitem{IKT}{A. Ionescu, C. Kenig, D. Tataru,  Global well-posedness of the KP-I initial-value problem in the energy space. Invent. Math. 173 (2008), no. 2, 265--304. }
\bibitem{KPV}{C. Kenig, G. Ponce, L. Vega, A bilinear estimate with
applications to the KdV equation, J. Amer. Math. Soc., 9:573-603,
1996. MR 96k:35159.}
\bibitem{KPV2}{C. Kenig, G. Ponce, L. Vega, Well-posedness and scattering results for the generalized Korteweg-de Vries equation via the contraction principle, Communications on Pure and Applied Mathematics 46 (1993), no. 4, 527-620.}
\bibitem{KPV3}{C. Kenig, G. Ponce, L. Vega, Well-posedness of the initial
value problem for the Korteweg-de Vries equation, J. Amer. Math.
Soc. 4 (1991), no. 2, 323-347.}
\bibitem{KenigT}{C. E. Kenig, H. Takaoka, Global wellposedness of the Modified Benjamin-Ono equation with initial data in $H^{1/2}$, International Mathematics Research Notices 2006 (2006), no. 1, 1-44.}
\bibitem{KlMa}{S. Klainerman and M. Machedon. Smoothing estimates for null forms and applications. Internat.
Math. Res. Notices, 9, 1994. MR 95i:58174}
\bibitem{MR}{L. Molinet and F. Ribaud, Well-posedness results for the generalized Benjamin-Ono equation with arbitary large initial data, International Mathematics Research Notices 2004 (2004), no. 70, 3757-3795.}
\bibitem{MR2}{L. Molinet and F. Ribaud, Well-posedness results for the generalized Benjamin-Ono equation with small initial data, Journal de Math\'ematiques Pures et Appliqu\'ees. Neuvi\`{e}me S\'erie 83 (2004), no. 2, 277-311.}

\bibitem{MST}{L. Molinet, J.-C. Saut and N. Tzvetkov, Ill-posedness issues for the
Benjamin- Ono and related equations, SIAM J. Math. Anal. 33 (2001),
982--988.}
\bibitem{KoTz}{H. Koch and N. Tzvetkov, Nonlinear wave interactions for the Benjamin-Ono equation, International Mathematics Research Notices 2005, no. 30, 1833-1847.}
\bibitem{TaoBO}{T. Tao, Global well-posedness of the Benjamin-Ono equation in $H^1(\R)$, Journal of Hyperbolic Differential Equations 1 (2004), no. 1, 27-49.}
\bibitem{Taokz}{T. Tao, Multiplinear weighted convolution
of $L^2$ functions and applications to nonlinear dispersive
equations. Amer. J. Math., 123(5):839-908, 2001. MR 2002k:35283}
\bibitem{Taopage}{Terence Tao's home page, http://www.math.ucla.edu/\~ tao/Dispersive/}
\bibitem{Taoscatteringgkdv}{T. Tao, Scattering for the quartic generalised Korteweg-de Vries equation, J. Differential Equations 232 (2007) 623-651.}
\bibitem{Tataru}{D. Tataru, Local and global results for
wave maps I, Comm. Partial Differential Equations {\bf 23} (1998),
1781-1793.}
\end{thebibliography}
\end{document}